\documentclass [leqno,11pt]{amsart}    
\usepackage{amssymb,amsmath,latexsym,amsfonts,amsbsy,amsthm,mathtools,graphicx,color}    
\usepackage{hyperref}
\usepackage{scalerel,stackengine}
\stackMath
\newcommand\reallywidehat[1]{%
\savestack{\tmpbox}{\stretchto{%
  \scaleto{%
    \scalerel*[\widthof{\ensuremath{#1}}]{\kern-.6pt\bigwedge\kern-.6pt}%
    {\rule[-\textheight/2]{1ex}{\textheight}}
  }{\textheight}%
}{0.5ex}}%
\stackon[1pt]{#1}{\tmpbox}%
}
\mathtoolsset{showonlyrefs=true}

\definecolor{MyDarkBlue}{rgb}{0,0.08,0.45}
\definecolor{yellow}{rgb}{0.99,0.99,0.70}
\definecolor{myback}{RGB}{204,232,207}
\definecolor{white}{rgb}{1.0,1.0,1.0}                                          
\definecolor{black}{rgb}{0.00,0.00,0.00}
\usepackage{cancel}
\usepackage[dvipsnames]{xcolor}
\setlength{\oddsidemargin}{0mm}
\setlength{\evensidemargin}{0mm} \setlength{\topmargin}{0mm}
\setlength{\textheight}{220mm} \setlength{\textwidth}{155mm}

\numberwithin{equation}{section}

\allowdisplaybreaks



\let\al=\alpha
\let\b=\beta

\let\d=\delta

\let\la=\lambda

\let\f=\frac

\let\na=\nabla
\let\th=\theta
\let\pa=\partial


\def\bbT{\mathbb T}

\def\no{\noindent}

\def\bbT{\mathbb{T}}

\def\eqdefa{\buildrel\hbox{\footnotesize def}\over =}

\usepackage{accents}

\newcommand{\beq}{\begin{equation}}
\newcommand{\eeq}{\end{equation}}
\newcommand{\ben}{\begin{eqnarray}}
\newcommand{\een}{\end{eqnarray}}
\newcommand{\beno}{\begin{eqnarray*}}
\newcommand{\eeno}{\end{eqnarray*}}
\definecolor{schrift}{RGB}{0,73,114}

\newtheorem{theorem}{Theorem}[section]

\newtheorem{lemma}[theorem]{Lemma}
\newtheorem{proposition}[theorem]{Proposition}


\begin{document}

\title[Enhanced dissipation and the non-local enhancement]{Metastability for the dissipative quasi-geostrophic equation and the non-local enhancement}
\author{Hui Li}
\address{Department of Mathematics, New York University Abu Dhabi, Saadiyat Island, P.O. Box 129188, Abu Dhabi, United Arab Emirates, Department of Mathematics, Zhejiang University, Hangzhou 310027, China.}
\email{lihui@nyu.edu,lihui92@zju.edu.cn}
\author{Weiren Zhao}
\address{Department of Mathematics, New York University Abu Dhabi, Saadiyat Island, P.O. Box 129188, Abu Dhabi, United Arab Emirates.}
\email{zjzjzwr@126.com, wz19@nyu.edu}

\date{}

\begin{abstract}
In this paper, we study the metastability for the 2-D linearized dissipative quasi-geostrophic equation with small viscosity $\nu$ around the quasi steady state $\theta_{sin}=e^{-\nu t}\sin y$. We proved the linear enhanced dissipation and obtained the dissipation rate. Moreover, the new non-local enhancement phenomenon was discovered and discussed. Precisely we showed that the non-local term re-enhances the enhanced diffusion effect by the shear-diffusion mechanism. 
\end{abstract}

\maketitle

\section{Introduction} 
In this paper, we study the 2D dissipative quasi-geostrophic (QG) equation on the torus $\mathbb{T}_{\delta}^2=\big\{(x,y): x\in \bbT_{{2\pi}{\delta}}, y\in \bbT_{2\pi}\big\}$ with $0<\delta< 1$:
\begin{equation}\label{eq:QG}
  \left\{
  	\begin{array}{l}
  		\pa_t\Theta+\nu (-\Delta)^{s} \Theta+V\cdot\nabla \Theta=0,\\
V=(-R_2,R_1)\Theta.
  	\end{array}
  \right.
\end{equation}
Here $s\in(0,1]$, $\nu>0$ is the dissipative coefficient,  $\Theta: \mathbb{T}_{\d}^2\to \mathbb{R}$ is a real-valued scalar function represents the potential temperature, $V$ is the fluid velocity and $R_1=\pa_x(-\Delta)^{\frac{1}{2}}$ and $R_2=\pa_y(-\Delta)^{\frac{1}{2}}$ are the Riesz transforms in $\mathbb{T}_{\d}^2$. The cases $s>\frac{1}{2}$, $s=\frac{1}{2}$, and $s<\frac{1}{2}$ are called sub-critical, critical, and super-critical respectively.


%
%
It is easy to see that the system \eqref{eq:QG} has a quasi steady state
\begin{align*}
\theta_{sin}=e^{-\nu t}\sin y,\quad V_{sin}=(e^{-\nu t}\cos y,0).
\end{align*} 

To understand the long time behavior of the solution with initial data $\Theta_{in}$ close to $\sin y$, it is natural to introduce the perturbation $\Theta=\theta_{sin}+\theta$ and $V=V_{sin}+U$. Then $(\theta, U)$ solves the following system:
\begin{equation}\label{eq:QG-2}
  \left\{
  	\begin{array}{l}
  		\pa_t\theta+\mathcal{L}_{\nu,s}(t)\theta=-U\cdot\na \theta\\
    U=(-R_2,R_1)\theta,
  	\end{array}
  \right.
\end{equation}
where
\begin{align}\label{Lnut}
	\mathcal{L}_{\nu,s}(t)=\nu (-\Delta)^{s}+e^{-\nu t}\cos y\pa_x(1-(-\Delta)^{-\frac12}).
\end{align}
From which one can derive the linearized QG equation:
\begin{align}\label{eq:L-QG}
\left\{\begin{aligned}
&	\pa_t\theta+\mathcal{L}_{\nu,s}(t)\theta=0\\
& \th|_{t=0}=\th_{in}. 
\end{aligned}\right. 
\end{align}
Taking Fourier transform in $x$ of \eqref{eq:L-QG}, we get that for $\al\neq 0$
\begin{equation}\label{eq:L-QG-al}
  \left\{
    \begin{array}{l}
      \pa_t\hat \theta+\nu(-\Delta_{\al})^{s}\hat \theta+i\al e^{-\nu t}\cos y \big(1-(-\Delta_{\al})^{-\frac{1}{2}}\big)\hat \theta=0,\\
      \hat\theta|_{t=0}=\hat \theta_{in}.
    \end{array}
  \right.
\end{equation}
Here $|\al|\ge c_0>1$ is the wave number and $\Delta_{\al}=\pa_{yy}-\al^2$. We also denote the linear operator by
\begin{align*}
   \mathcal{L}_{\al,\nu, s}=\nu(-\Delta_{\al})^{s}+i\al e^{-\nu t}\cos y \big(1-(-\Delta_{\al})^{-\frac{1}{2}}\big).
\end{align*} 
In this paper, we mainly study the long time behavior of the solutions to \eqref{eq:L-QG-al}.
\subsection{Historical comments}
The 2-D dissipative quasi-geostrophic (QG) equation is derived from the study of geophysical fluid dynamics \cite{Ped1987} and attracted a lot of attention from mathematicians as it it serves as a lower dimensional model of the 3-D Navier-Stokes equations \cite{CMT1994}. The existence of global regular solutions for the sub-critical QG equation was given by Constantin-Wu \cite{CW1999} and Resnick \cite{Re1995}. For the critical case, the global regularity was proved independently by Kiselev-Nazarov-Volberg \cite{KNV2007} and Caffarelli Vasseur \cite{CV2010}. While for the super-critical case, whither solutions stay regular or can blow up is still open, and Dabkowski-Kiselev-Silvestre-Vicol \cite{DKSV2014} proved global regularity for the slightly super-critical QG equation. For more results about QG equation, we refer to \cite{Co2002,Ju2004,KRYX2016}.

Metastability is common in viscous fluid, which is a quasi-stable state of a dynamical system other than the system's state of least energy. A famous example is the Kolmogorov flow for the 2-D incompressible Navier-Stokes equations. The previous studies \cite{BW,Cou,LX,MS,WZZ3} have shown that the solution with initial data closed to $(\cos y,0)$ will approach to the so called bar states in a much shorter time than the diffusive time $O(\f{1}{\nu})$. Then they dominate the dynamics for very long time intervals. Afterwards they decay on the diffusive time scale. See \cite{BW-B} for a similar phenomena for Burgers equation with small viscosity. 

A reasons for metastability is the so called enhanced dissipation. It is sometimes referred to by modern authors as the `shear-diffusion mechanism'. This decay rate is much faster than the diffusive decay of $e^{-\nu t}$. The mechanism leading to the enhanced dissipation is due to the mixing. Generally speaking, the sheared velocity sends information to higher frequency, the diffusion term `kill' the information in the higher frequency. The degeneracy rate of sheared velocity is corresponding to the lowest speed how the information moves to higher frequency, which leads to the different enhanced dissipation rates \cite{DRM2021,BC2017,Coti2020,Wei2021}. Also the stronger diffusion gives stronger enhanced dissipation \cite{He2021}. We refer to some recent results for the enhanced dissipation phenomena of different flows in the fluid motions: Couette flow  \cite{BVW2018,CLWZ2018transition,Masmoudi_Zhao_2019,MZ2020enhanced}, Poiseuille flow \cite{CEW2020,DL2020} and other flows \cite{GRENIER2020108339,LWZ2020}. 

In this paper we proved the enhanced dissipation for the linearized dissipative QG equations. {\bf Moreover we discovered and studied the non-local enhancement phenomenon, which means that, the non-local term re-enhances the enhanced dissipation. }

\subsection{Main results}

Our first result is the linear enhanced dissipation for the critical case: 
\begin{theorem}[Critical case]\label{Thm-cri}Suppose that $\theta$ solves \eqref{eq:L-QG} with $s=\f12$. Then there exists $\nu_0\le1,\, c>0$, $C>1$ such that for $0<\nu<\nu_0$ and $0\leq t\leq C^{-1}\nu^{-1}$, it holds that 
\begin{align*}
  \|\theta_{\neq}(t)\|_{L^2(\mathbb{T}_{\delta}^2)}\leq Ce^{-c\nu^{\f35}t}\|\theta_{in}\|_{L^2(\mathbb{T}_{\delta}^2)}
\end{align*}
where $\theta_{\neq}(t,x,y)=\theta(t,x,y)-\f{1}{2\pi \delta}\int_{\mathbb{T}_{2\pi \delta}}\theta(t,x,y)dy$. 
\end{theorem}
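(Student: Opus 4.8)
\emph{Proof strategy.} The plan is to take the Fourier transform in $x$, work one wave number $\alpha\neq0$ at a time via \eqref{eq:L-QG-al} with $s=\tfrac12$, prove a decay estimate that is uniform in $|\alpha|\ge c_0$, and recombine by Plancherel; since $\|\theta_{in,\neq}\|_{L^2}\le\|\theta_{in}\|_{L^2}$ it suffices to show $\|\hat\theta_\alpha(t)\|_{L^2(\mathbb T_{2\pi})}\le Ce^{-c\nu^{3/5}t}\|\hat\theta_{\alpha,in}\|_{L^2(\mathbb T_{2\pi})}$ uniformly. The first step is to remove the slowly varying coefficient $e^{-\nu t}$: on $0\le t\le C^{-1}\nu^{-1}$ one has $e^{-\nu t}\in[e^{-1/C},1]$, so either one keeps it as a bounded modulation of the transport term, or one changes the time variable by $d\tau=e^{-\nu t}\,dt$ (so that $\tau\asymp t$ on this interval), which makes the transport autonomous and replaces $\nu$ in the dissipation by $\nu e^{\nu t}\in[\nu,e^{1/C}\nu]$, only strengthening it. Either way the task reduces to proving enhanced dissipation at rate $\nu^{3/5}$ for the (essentially autonomous) operator $\mathcal{L}_{\alpha}=\nu(-\Delta_\alpha)^{1/2}+i\alpha\cos y\,\big(1-(-\Delta_\alpha)^{-1/2}\big)$ on $L^2(\mathbb T_{2\pi})$, with all estimates stable under an $O(1)$ perturbation of the coefficients.

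Next I would dispose of large wave numbers: since $(-\Delta_\alpha)^{1/2}\ge|\alpha|$ as an operator, the bare energy identity gives $\tfrac{d}{dt}\|\hat\theta\|^2\le(-2\nu|\alpha|+C|\alpha|^{-1})\|\hat\theta\|^2$, the $|\alpha|^{-1}$ term reflecting that $i\alpha\cos y\,(1-(-\Delta_\alpha)^{-1/2})$ is skew-adjoint only up to the order $-2$ commutator $\tfrac{\alpha}{2}[\cos y,(-\Delta_\alpha)^{-1/2}]$, of operator norm $O(|\alpha|^{-1})$. Hence $|\alpha|$ above a threshold $\sim\nu^{-1/2}$ is already taken care of, and the real content is the band $c_0\le|\alpha|\lesssim\nu^{-1/2}$, where the mixing produced by the shear $\cos y$ must be exploited.

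The heart of the proof is a hypocoercivity estimate for $\mathcal{L}_{\alpha}$ on that band. I would construct a modified energy functional $\Phi(t)=\|\hat\theta\|^2+(\text{correction terms})$ --- combining $\nu^{a_1}\|\partial_y\hat\theta\|^2$, a transport-type cross term $\nu^{a_2}\mathrm{Re}\langle i\,m_\ell(y)\,\partial_y\hat\theta,\hat\theta\rangle$ with a profile $m_\ell$ adapted to the scale $\ell=\ell(\nu,\alpha)$ around the points $y\in\{0,\pi\}$ where $\sin y$ vanishes, and possibly one more term --- with exponents and profiles tuned so that $\tfrac{d}{dt}\Phi\le-c\,\nu^{3/5}\Phi$ uniformly over the band for $0<\nu<\nu_0$. (The resolvent viewpoint --- a pseudospectral bound $\|(\mathcal{L}_{\alpha}-i\lambda)g\|_{L^2}\gtrsim\nu^{3/5}\|g\|_{L^2}$ uniform in $\lambda\in\mathbb R$, fed into a quantitative Gearhart--Pr\"uss theorem --- is morally equivalent, but needs extra care because $\mathcal{L}_{\alpha}$ is only accretive up to the $O(|\alpha|^{-1})$ term above, which is not negligible on the band.) After localization the estimate reduces to analyzing the symbol $a(y,\eta)=\alpha\cos y\,\big(1-(\alpha^2+\eta^2)^{-1/2}\big)$ near its degenerate critical points $(y,\eta)\in\{(0,0),(\pi,0)\}$: the cross term extracts the mixing gain, and the new ingredient --- the ``non-local enhancement'' --- is that the factor $1-(\alpha^2+\eta^2)^{-1/2}$ is strictly increasing in $|\eta|$, so it feeds an extra favorable contribution into the cross-term balance and pushes the exponent beyond the value (heuristically $\tfrac23$) that the quadratic vanishing of $\cos y-\cos y_0$ together with the half-Laplacian dissipation would give for the purely local transport $i\alpha\cos y$; the order $-2$ commutator and the $O(1)$ modulation coming from $e^{-\nu t}$ have to be absorbed by the dissipative and positive parts of $\Phi$, which is where the non-local structure must be handled with care.

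Finally one undoes the reductions: the decay of $\Phi$ (equivalently of $e^{-\tau\mathcal{L}_{\alpha}}$) transfers to $\hat\theta_\alpha$ on $0\le t\le C^{-1}\nu^{-1}$ because $\tau\asymp t$ there, and summing over $\alpha\neq0$ by Plancherel yields the stated bound for $\theta_{\neq}$, with $C$ (allowed to depend on $\delta$) absorbing the Gearhart--Pr\"uss loss, the time-change ratios and the $O(1)$ slow-down of the mixing caused by the reduced low-frequency effective wave number $|\alpha|-1\ge c_0-1>0$; the restriction $t\le C^{-1}\nu^{-1}$ is precisely the range where all the ``$\asymp$'' reductions hold. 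The step I expect to be the main obstacle is this localized estimate itself: picking the optimal scale $\ell$ and rigorously extracting the gain from the $\eta$-dependence of the non-local symbol, while controlling the genuinely pseudodifferential (non-commuting) nature of $\cos y\,(-\Delta_\alpha)^{-1/2}$ --- its commutators with the cutoffs and with $\partial_y$ --- and the non-skew-adjoint part of $\mathcal{L}_{\alpha}$, so as to land exactly on $\nu^{3/5}$ uniformly in $\alpha$ across the whole band.
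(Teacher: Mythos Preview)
Your overall architecture---Fourier in $x$, mode-by-mode hypocoercivity, Plancherel recombination---matches the paper, but there is a genuine gap at the point you yourself flag: the operator $i\alpha B$ with $B=\cos y\,(1-(-\Delta_\alpha)^{-1/2})$ fails to be skew-adjoint in $L^2$ by an $O(|\alpha|^{-1})$ term, and on the band $|\alpha|\sim c_0$ this defect is $O(1)$, far larger than the target rate $\nu^{3/5}$. None of the pieces of your functional $\Phi$ can absorb an $O(1)$ growth contribution to $\|\hat\theta\|_{L^2}^2$, and the resolvent route hits the same wall (Gearhart--Pr\"uss needs accretivity). The paper's fix is algebraic, not perturbative: it works throughout in the weighted inner product $\langle u,w\rangle_*=\langle u,(1-(-\Delta_\alpha)^{-1/2})w\rangle$, in which $B$ is \emph{exactly} symmetric, so $i\gamma(t)B$ contributes nothing to $\tfrac{d}{dt}\|\hat\theta\|_*^2$ and the defect vanishes identically. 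With this in place no time change, no large/small $|\alpha|$ split, and no localization are needed; $e^{-\nu t}$ is kept as is and all $|\alpha|\ge c_0$ are handled uniformly.

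The second divergence is methodological. The paper uses no spatial cutoff near $y\in\{0,\pi\}$, no profile $m_\ell$, and no symbol analysis. Its functional is global and time-weighted,
\[
\Phi(t)=E_0+a_1\nu^{2}t^{2}E_1+a_2\nu^{2}t^{3}\mathcal E_1+a_3\nu^{2}t^{4}\mathcal E_2,
\]
with $E_0=\|\hat\theta\|_*^2$, $E_1=\|\partial_y\hat\theta\|_*^2$, the cross term $\mathcal E_1=-\tfrac{\alpha}{|\alpha|}\Re\langle iA\hat\theta,\partial_y\hat\theta\rangle_*$ where $A=\sin y\,(1-(-\Delta_\alpha)^{-1/2})$, and---this is the replacement for your ``one more term''---the inviscidly conserved quantity $\mathcal E_2=\|\hat\theta\|_*^2-\|B\hat\theta\|_*^2$. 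The non-local enhancement is extracted not from $\eta$-monotonicity of a symbol but from two concrete Fourier computations: an equivalence $\mathcal E_2\approx\|A\hat\theta\|_{L^2}^2+\sum_\beta((\alpha^2+\beta^2)^{1/2}-1)|\tilde\psi(\beta)|^2$ together with the key gain $\mathcal E_2\lesssim|\alpha|^{1/2}\big(\|A\hat\theta\|_{L^2}^2+\sum_\beta\tfrac{\beta^2}{(\alpha^2+\beta^2)^{1/2}}|\tilde\psi(\beta)|^2\big)$, and a sign identity for $\Re\langle(BA-AB)\hat\theta,\partial_y\hat\theta\rangle_*$. Balancing these against the dissipative terms forces precisely $\nu^{3/5}$; the time weights $t^2,t^3,t^4$ are what allow bare $L^2$ data, and for $t\ge\nu^{-3/5}$ one freezes them and reads off exponential decay. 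Your localized/pseudodifferential scheme might be salvageable, but as written it supplies neither the cancellation that kills the $O(1)$ defect nor a concrete substitute for these two lemmas, so the argument does not close.
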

In order to show the non-local enhancement phenomenon, we also studied a toy model and proved the following result. 
\begin{theorem}[Toy model]\label{thm-toy}Suppose that $\theta$ solves 
\begin{equation}
  \left\{
    \begin{array}{l}
      \pa_t \th+\cos y\pa_x\th+\nu (-\Delta)^{\f12}\th=0\\
      \th|_{t=0}=\th_{in}.
    \end{array}
  \right.
\end{equation}
Then there exists $\nu_0\le 1,\, c>0$, $C>1$ such that for $0<\nu<\nu_0$, it holds that 
\begin{align*}
  \|\theta_{\neq}(t)\|_{L^2(\mathbb{T}_{\delta}^2)}\leq Ce^{-c\nu^{\f23}t}\|\theta_{in}\|_{L^2(\mathbb{T}_{\delta}^2)}
\end{align*}
where $\theta_{\neq}(t,x,y)=\theta(t,x,y)-\f{1}{2\pi \delta}\int_{\mathbb{T}_{2\pi \delta}}\theta(t,x,y)dy$. 
\end{theorem}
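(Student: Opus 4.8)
\medskip
\noindent\textbf{Proof outline.} The toy equation is autonomous, so the plan is to combine a resolvent (pseudospectral) estimate with the quantitative Gearhart--Pr\"uss theorem \cite{Wei2021}. After a Fourier transform in $x$ --- the advection term $\cos y\,\pa_x$ kills the $x$-average, so only $|\al|\ge c_0>1$ matters --- the mode $\hat\theta_\al(t,y)$ solves $\pa_t\hat\theta_\al+\mathcal{H}_\al\hat\theta_\al=0$ with $\mathcal{H}_\al=\nu(-\Delta_\al)^{1/2}+i\al\cos y$, which is m-accretive on $L^2(\bbT_{2\pi})$ (self-adjoint nonnegative real part, bounded skew-adjoint imaginary part). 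By Plancherel in $x$ it suffices to show $\|e^{-t\mathcal{H}_\al}\|_{L^2\to L^2}\le Ce^{-c\nu^{2/3}t}$ with $C,c$ independent of $\al$, and by Gearhart--Pr\"uss this follows from the uniform pseudospectral lower bound
\begin{equation}\label{eq:toy-res-plan}
\inf_{\la\in\mathbb{R}}\ \|(\mathcal{H}_\al-i\la)f\|_{L^2}\ \ge\ c\,\nu^{2/3}|\al|^{1/3}\,\|f\|_{L^2},\qquad f\in H^1(\bbT_{2\pi}),
\end{equation}
since $\nu^{2/3}|\al|^{1/3}\ge c_0^{1/3}\nu^{2/3}$.

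\smallskip
To prove \eqref{eq:toy-res-plan} I would first dispose of the easy cases. The real part of $\langle(\mathcal{H}_\al-i\la)f,f\rangle$ equals $\nu\|(-\Delta_\al)^{1/4}f\|_{L^2}^2\ge\nu|\al|\|f\|_{L^2}^2$, giving $\|(\mathcal{H}_\al-i\la)f\|_{L^2}\ge\nu|\al|\|f\|_{L^2}$, which already yields \eqref{eq:toy-res-plan} when $|\al|\gtrsim\nu^{-1/2}$; and the imaginary part shows $\|(\mathcal{H}_\al-i\la)f\|_{L^2}\gtrsim|\al|\|f\|_{L^2}$ when $|\la|>2|\al|$. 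So one may assume $c_0\le|\al|\lesssim\nu^{-1/2}$ and $|\la|\le2|\al|$. Here I would use an $O(1)$-scale partition of unity in $y$ near, and away from, the two nondegenerate critical points $0,\pi$ of $\cos y$; being at $O(1)$ scale, its commutators with $\nu(-\Delta_\al)^{1/2}$ are $L^2$-bounded by $O(\nu)$ and hence negligible against $\nu^{2/3}$. On the pieces away from $0,\pi$, $\cos y$ is monotone with $|\al\sin y|\gtrsim1$ and the resonant set $\{\al\cos y=\la\}$ is crossed transversally, so a Couette-type estimate (testing against $f$ and against a multiplier built from $\al\cos y-\la$ and $\pa_y$) gives the stronger bound $\gtrsim(\nu|\al|)^{1/2}\gtrsim\nu^{2/3}$. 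The binding piece contains a critical point $y_0\in\{0,\pi\}$: there I would Taylor expand $\cos y=\cos y_0\mp\tfrac12(y-y_0)^2+O(|y-y_0|^4)$, absorb the constant $\al\cos y_0$ into $\la$, and rescale $y-y_0=\ell Y$ with $\ell=(\nu/|\al|)^{1/3}$. This turns the localized operator --- modulo a nonnegative self-adjoint contribution from the $\al^2$-part of $-\Delta_\al$ (which only improves the estimate) and a genuinely lower-order term $O(\ell^2Y^4)$ with $\ell^2=(\nu/|\al|)^{2/3}\to0$ --- into $\nu^{2/3}|\al|^{1/3}$ times the model operator $(-\pa_{YY})^{1/2}\mp\tfrac i2Y^2$ on a long interval exhausting $\mathbb{R}$ as $\nu\to0$; the finer $\ell$-scale localization and the passage from the interval to $\mathbb{R}$ are controlled using the largeness of $|\al\cos y-\la|$ away from $y_0$. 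Thus \eqref{eq:toy-res-plan} reduces to the model resolvent estimate
\begin{equation}\label{eq:toy-model-plan}
\inf_{\mu\in\mathbb{R}}\ \bigl\|\bigl((-\pa_{YY})^{1/2}-\tfrac i2Y^2-i\mu\bigr)F\bigr\|_{L^2(\mathbb{R})}\ \ge\ c_1\,\|F\|_{L^2(\mathbb{R})}.
\end{equation}

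\smallskip
The main obstacle is \eqref{eq:toy-model-plan}, a resolvent estimate near the imaginary axis for the fractional complex-Airy-type operator $\mathcal{M}=(-\pa_{YY})^{1/2}-\tfrac i2Y^2$. Morally it holds because the two ways to make $\|(\mathcal{M}-i\mu)F\|$ small are mutually exclusive: $\|(-\pa_{YY})^{1/4}F\|^2=\mathrm{Re}\langle(\mathcal{M}-i\mu)F,F\rangle$ forces $F$ to concentrate at zero frequency, hence to delocalize in $Y$, which makes $\tfrac12\|YF\|^2=-\mathrm{Im}\langle(\mathcal{M}-i\mu)F,F\rangle$ large. Making this quantitative and uniform in $\mu$ is the crux: for $\mu\ge0$ one has $\|(\mathcal{M}-i\mu)F\|\ge\|(\tfrac12Y^2+\mu)F\|\gtrsim|\mu|\|F\|$ directly, so it remains to treat a bounded range of $\mu$ (in particular $\mu<0$, where the imaginary symbol vanishes), which I would do by a compactness/contradiction argument --- $\mathcal{M}$ has compact resolvent and neither its spectrum nor the closure of its numerical range meets $i\mathbb{R}$ --- or, to keep the constant effective, by a positive-commutator argument in the style of H\"ormander that simultaneously sees the degenerate diffusion and the nondegenerate minimum of $\tfrac12Y^2+\mu$. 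What remains is bookkeeping: checking that the quartic Taylor tail, the $\al^2$-correction, the $\ell$-scale localization and the passage to $\mathbb{R}$ are lower order for $\nu$ small, and arranging the partition of unity so the monotone and critical-point regimes patch smoothly as $\{\al\cos y=\la\}$ slides toward $\la/\al=\pm1$. An alternative route, bypassing both Gearhart--Pr\"uss and \eqref{eq:toy-model-plan}, is a direct hypocoercive energy estimate for $\hat\theta_\al$ with an augmented functional --- $\|\hat\theta_\al\|_{L^2}^2$ plus a small commutator correction, designed so that its time derivative is $\le-c\nu^{2/3}$ times itself --- which handles all scales simultaneously but needs the same quadratic-degeneracy input and is the heavier computation.
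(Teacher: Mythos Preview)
Your approach is viable but takes the opposite route from the paper. The paper proves the theorem by precisely the direct hypocoercive energy method you flag at the end as an ``alternative'': it introduces the time-weighted functional
\[
\Phi^S(t)=\|\hat\theta\|_{L^2}^2+a_1\nu^{2}t^{2}\|\pa_y\hat\theta\|_{L^2}^2-a_2\nu^{2}t^{3}\tfrac{\al}{|\al|}\Re\langle i\sin y\,\hat\theta,\pa_y\hat\theta\rangle+a_3\nu^{2}t^{4}\|\sin y\,\hat\theta\|_{L^2}^2
\]
with explicit $a_j$, shows $\tfrac{d}{dt}\Phi^S\le -c\nu^{2}t^{2}\|\hat\theta\|_{L^2}^2$ on $[0,\nu^{-2/3}]$, and then freezes the weights for $t\ge\nu^{-2/3}$ to get exponential decay. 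The only analytic ingredient beyond direct commutator bookkeeping is the one-line interpolation identity
\[
\|\hat\theta\|_{L^2}^2\le 2\,\|\pa_y(\cos y\,\hat\theta)\|_{L^2}\,\|\sin y\,\hat\theta\|_{L^2},
\]
proved by a single integration by parts, which encodes the quadratic degeneracy of $\cos y$ and plays the role your model resolvent estimate \eqref{eq:toy-model-plan} plays for you.

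Your resolvent/Gearhart--Pr\"uss scheme has the conceptual advantage that the rate $\nu^{2/3}$ is read off from the rescaling $\ell=(\nu/|\al|)^{1/3}$ near the nondegenerate critical points, and it generalizes mechanically to other profiles. The cost is machinery the paper avoids entirely: it never localizes, never leaves the torus, and never invokes a model operator on $\mathbb{R}$. Two places in your outline deserve a closer look if you carry it out. First, the $\al^2$ contribution: after rescaling, the dissipative part is $(m^2-\pa_{YY})^{1/2}$ with $m^2=\ell^2\al^2=\nu^{2/3}|\al|^{4/3}$, which for $|\al|$ near $\nu^{-1/2}$ is $O(1)$ rather than small, so ``only improves the estimate'' is true in spirit but needs either a version of \eqref{eq:toy-model-plan} uniform in $m\in[0,1]$ or a separate argument that a nonnegative self-adjoint perturbation cannot spoil the pseudospectral bound. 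Second, in your compactness sketch for \eqref{eq:toy-model-plan}, the numerical-range claim hinges on a fractional uncertainty principle of the form $\||D_Y|^{1/2}F\|_{L^2}^{2}\,\|YF\|_{L^2}\gtrsim\|F\|_{L^2}^{3}$; this is true (scale-invariant and provable), but should be stated and used explicitly, since without it the closure of the numerical range of $|D_Y|-\tfrac{i}{2}Y^2$ could in principle touch $0\in i\mathbb{R}$.
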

We also study the linearized subcritical QG which is similar to the linearized Navier-Stokes equation. We find that the different non-local terms will affect the enhanced dissipation rate. 
\begin{theorem}[Subcritical case]\label{thm-sub}Suppose that $\theta$ solves \eqref{eq:L-QG} with $s=1$. Then there exists $\nu_0\le 1,\, c>0$, $C>1$ such that for $0<\nu<\nu_0$ and $0\leq t\leq C^{-1}\nu^{-1}$, it holds that 
\begin{align*}
  \|\theta_{\neq}(t)\|_{L^2(\mathbb{T}_{\delta}^2)}\leq Ce^{-c\nu^{\f37}t}\|\theta_{in}\|_{L^2(\mathbb{T}_{\delta}^2)}
\end{align*}
where $\theta_{\neq}(t,x,y)=\theta(t,x,y)-\f{1}{2\pi \delta}\int_{\mathbb{T}_{2\pi \delta}}\theta(t,x,y)dy$. 
\end{theorem}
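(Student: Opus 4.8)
The plan is to take the Fourier transform of \eqref{eq:L-QG} in $x$ and to establish, for every wave number $\al\neq0$ (so $|\al|\ge c_0>1$), an exponential decay bound for $\hat\th(t,\al,\cdot)$ that is uniform in $\al$; since $\th_{\neq}$ is precisely the superposition of the modes $\al\neq0$, Plancherel in $x$ then yields the stated estimate. On the window $0\le t\le C^{-1}\nu^{-1}$ the coefficient $e^{-\nu t}$ stays in $[e^{-1/C},1]$, so after freezing it to a parameter $\la\in[e^{-1/C},1]$ --- treating $\pa_t e^{-\nu t}=-\nu e^{-\nu t}$ as a lower-order term that only perturbs the constants $c,C$ --- one works with the autonomous operator $\cL_{\al,\nu,1}=\nu(-\Delta_\al)+i\al\la\cos y\,(1-(-\Delta_\al)^{-\f12})$. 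The analysis then splits by the size of $\al$: when $|\al|\gtrsim\nu^{-2/7}$ the bare dissipation already suffices, since $\nu\|(-\Delta_\al)^{\f12}\hat\th\|_{L^2}^2\ge\nu\al^2\|\hat\th\|_{L^2}^2\ge\nu^{3/7}\|\hat\th\|_{L^2}^2$ and a plain $L^2$ estimate gives $e^{-c\nu^{3/7}t}$. The content is the complementary regime $c_0\le|\al|\lesssim\nu^{-2/7}$.

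For such $\al$, writing $f=\hat\th(t,\al,\cdot)$, I would run a weighted (hypocoercivity-type) energy estimate --- or, equivalently, a quantitative Gearhart--Pr\"uss / pseudospectral bound for the frozen operator --- built on a functional of the form
\[
\Phi(t)=\|f\|_{L^2}^2+a\,\nu^{p_1}\|(-\Delta_\al)^{\f12}f\|_{L^2}^2-b\,\nu^{p_2}\,\mathrm{Re}\big\langle i\,\mathrm{sgn}(\al)\,\sin y\,(1-(-\Delta_\al)^{-\f12})f,\ (-\Delta_\al)^{\f12}f\big\rangle_{L^2},
\]
possibly localized by cutoffs near the critical points $y=0,\pi$ of $\cos y$. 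The sign-indefinite cross term is the standard device whose time derivative, through $\pa_y\cos y=-\sin y$, manufactures the coercive quantity --- of order $|\al|^2$ times a low-$y$-frequency norm of $f$ --- that the bare diffusion lacks; one then tunes $a,b>0$ and the exponents $p_1,p_2$ so that every production term generated by the transport operator is absorbed and $\f{d}{dt}\Phi\le-c\,\nu^{3/7}\Phi$. The exponents are forced by the shear--diffusion balance near the critical points: localizing to $y$-width $\delta$ and balancing the effective transport against the dissipation $\nu(-\Delta_\al)\sim\nu\delta^{-2}$ is struck at $\delta\sim\nu^{2/7}$, which is exactly what produces the rate $\nu\delta^{-2}\sim\nu^{3/7}$ --- faster than the value $\nu^{1/2}$ (coming from $\delta\sim\nu^{1/4}$) that the transport term would give on its own. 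Integrating the differential inequality and using $\Phi(0)\le C\|f(0)\|_{L^2}^2$ completes the per-mode bound.

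The principal obstacle --- and the reason the rate departs from the classical one --- is the non-local term $-\la\cos y\,\pa_x(-\Delta_\al)^{-\f12}$. Unlike the bare transport $\cos y\,\pa_x$ it is not skew-adjoint: its contribution to $\f12\f{d}{dt}\|f\|_{L^2}^2$ is the Calder\'on-type commutator term $\f{i\al\la}{2}\langle[\cos y,(-\Delta_\al)^{-\f12}]f,f\rangle$, which is real, of size $\lesssim|\al|^{-1}\|f\|_{L^2}^2$, of indefinite sign, and --- in the delicate range $c_0\le|\al|\lesssim\nu^{-2/7}\ll\nu^{-1/3}$ --- is \emph{not} dominated by $\nu\|(-\Delta_\al)^{\f12}f\|_{L^2}^2\ge\nu\al^2\|f\|_{L^2}^2$. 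Yet the term cannot be discarded: it is precisely what re-enhances the dissipation rate from $\nu^{1/2}$ to $\nu^{3/7}$, i.e.\ it is the analytic face of the ``non-local enhancement''. I would control it by a Littlewood--Paley splitting $f=P_{\lesssim|\al|}f+P_{\gtrsim|\al|}f$ in $y$: on the high-frequency part $(-\Delta_\al)^{-\f12}$ is a genuine $|\pa_y|^{-1}$-smoothing that is absorbed into the diffusion, while on the low-frequency part one uses $1-(-\Delta_\al)^{-\f12}\ge 1-c_0^{-1}>0$, bounded below, to re-run the shear--diffusion argument with the transport it supplies; reconciling the two regimes, together with the commutator bookkeeping, is where the exponent $3/7$ is finally read off. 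Summing the per-mode bounds over $\al\neq0$ then yields $\|\th_{\neq}(t)\|_{L^2(\mathbb{T}_\delta^2)}\le Ce^{-c\nu^{3/7}t}\|\th_{in}\|_{L^2(\mathbb{T}_\delta^2)}$ for $0\le t\le C^{-1}\nu^{-1}$.
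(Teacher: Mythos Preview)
Your sketch has the right general shape (per-mode hypocoercivity) but misses the structural device that makes the argument close, and the functional you write is too small. The paper does not fight the commutator you flag as the ``principal obstacle'': it works in the weighted inner product $\langle u,w\rangle_*=\langle u,(1-(-\Delta_\al)^{-1/2})w\rangle$, with respect to which $B=\cos y\,(1-(-\Delta_\al)^{-1/2})$ is \emph{exactly} symmetric, so the full transport $i\gamma(t)B$ is anti-self-adjoint and contributes nothing to $\tfrac{d}{dt}\|\hat\th\|_*^2$. The sign-indefinite term $\tfrac{i\al\la}{2}\langle[\cos y,(-\Delta_\al)^{-1/2}]f,f\rangle$ therefore never appears, and the Littlewood--Paley workaround you propose is unnecessary (and, as written, not carried out). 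More seriously, your three-term functional cannot close: the paper uses a \emph{four}-term energy
\[
\Phi_1(t)=E_0+a_1\nu t\,E_1+a_2\nu t^2\,\mathcal E_1+a_3\nu t^3\,\mathcal E_2,
\]
with $E_0=\|\hat\th\|_*^2$, $E_1=\|\pa_y\hat\th\|_*^2$, $\mathcal E_1=-\tfrac{\al}{|\al|}\Re\langle iA\hat\th,\pa_y\hat\th\rangle_*$ and, crucially, $\mathcal E_2=\|\hat\th\|_*^2-\|B\hat\th\|_*^2$. This last term is equivalent to $\|A\hat\th\|_{L^2}^2$ plus a lower-order piece (Lemma~\ref{lem-equivalence}) and is nearly conserved by the transport; without it, the good contribution $-|\gamma|\|A\hat\th\|_{L^2}^2$ that $\tfrac{d}{dt}\mathcal E_1$ produces has nothing to feed into, and the loop does not close.

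Your $\delta\sim\nu^{2/7}$ heuristic asserts the exponent rather than derives it: nothing in the three-term scheme singles out $2/7$ over $1/4$. In the paper the $3/7$ comes from the quantitative fact that $A=\sin y\,(1-(-\Delta_\al)^{-1/2})$ behaves like a $|D|^{-1/2}$-order operator rather than the local $\sin y\sim |D|^{-1}$ (Lemma~\ref{lem-est-me2}), which shifts the hypocoercivity balance; Proposition~\ref{pro-energy-sub} then gives $\tfrac{d}{dt}\Phi_1\le -a_3\nu^{3/2}t^{5/2}E_0$ on $[0,\nu^{-3/7}]$, and a constant-coefficient variant $\tilde\Phi_1$ on $[\nu^{-3/7},\nu^{-1}]$ yields $\tfrac{d}{dt}\tilde\Phi_1\le -a_3\nu^{3/7}\tilde\Phi_1$. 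The time weights (growing from $0$) are what make $\Phi_1(0)=E_0(0)$ and hence allow pure $L^2$ data; your fixed-coefficient $\Phi$ would need $H^1$ at $t=0$, so the claim $\Phi(0)\le C\|f(0)\|_{L^2}^2$ is false as stated. Finally, no freezing of $e^{-\nu t}$ and no case split in $|\al|$ are performed: $\gamma(t)=\al e^{-\nu t}$ is kept throughout, and the $\al$-dependence is absorbed into the coefficients $a_1\sim|\al|^{-1}$, $a_2\sim|\al|^{-1/2}$, $a_3\sim 1$.
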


\subsection{Discussion}
Let us first introduce the generalized linear operator
\begin{align}
	\mathcal{L}_{\al,\nu, s,\tilde s}=\nu (-\Delta_{\al})^{ s}+i\al\cos y(1-(-\Delta_{\al})^{-\tilde s})
\end{align}
and
\begin{align}
	\mathcal{L}^S_{\al,\nu, s}=\nu (-\Delta_{\al})^{ s}+i\al \cos y
\end{align}
with $ s,\tilde s>0$. Let 
\begin{align*}
	&B_{\al,\tilde s}=\cos y \big(1-(-\Delta_{\al})^{-\tilde s}\big),\qquad\qquad\qquad\quad\ \ \,  B^S=\cos y,\\
	&A_{\al,\tilde s}=-[\pa_y, B_{\al,\tilde s}]=\sin y \big(1-(-\Delta_{\al})^{-\tilde s}\big),\qquad A^S=-[\pa_y,B^S]=\sin y.
\end{align*}

We summary the results in the following table. 
\begin{table}[!htbp]
\centering
\begin{tabular}{|c|c|c|c|c|c|}
\hline  
&Operator & Parameters & Dissipation rate &Equations & Reference\\
\hline  
(1)&$\mathcal{L}_{\al,\nu, s,\tilde s}$ & $ s=\tilde s=1$  &$e^{-c\nu^{\f12}t}$& Linearized Navier-Stokes &\cite{IMM,WZ2019,WZZ3}\\
\hline 
(2)&$\mathcal{L}^S_{\al,\nu, s}$ & $ s=1$ & $e^{-c\nu^{\f12}t}$& Transport diffusion&\cite{BW,He2021}\\
\hline
(3)&$\mathcal{L}_{\al,\nu, s,\tilde s}$ & $ s=1$, $\tilde s=\f12$ &$e^{-c\nu^{\f37}t}$& Linearized sub-critical QG& this paper\\
\hline
(4)&$\mathcal{L}_{\al,\nu, s,\tilde s}$ & $ s=\tilde s=\f12$ &$e^{-c\nu^{\f35}t}$& Linearized critical QG&this paper\\
\hline
(5)&$\mathcal{L}^S_{\al,\nu, s}$ & $ s=\f12$ &$e^{-c\nu^{\f23}t}$& Transport fractional diffusion& this paper\\
\hline
\end{tabular}
\end{table}

Comparing the results listed above, we have the following observations:
\begin{itemize}
\item The non-local term in Case $(1)$ does not affect the dissipation rate. 
\item The result $(3)$ in comparison with result $(1)$ and $(2)$, indicates that the non-local term accelerates the dissipation rate. Similar phenomenon happens in the fractional diffusion case, which can be found in result (4) and (5). Actually the non-local term is a compact perturbation to the non-self-adjoint operator $\mathcal{L}^S_{\al,\nu, s}$. However, the non-self-adjoint operator is sensitive to the compact perturbations. 
\item The stronger diffusion term leads to the faster decay rate. Indeed by modifying our proof, one can obtain that for $0< s\leq 1$, 
\ben
\left\|e^{-t\mathcal{L}_{\al,\nu, s,\frac{1}{2}}}\right\|_{L^2\to L^2}\lesssim e^{-c\nu^{\frac{3}{3+4 s}}t}, \quad |\al| >1,
\een
and 
\ben
\left\|e^{-t\mathcal{L}^S_{\al,\nu, s}}\right\|_{L^2\to L^2}\lesssim e^{-c\nu^{\frac{2}{2+2 s}}t}, \quad \al\neq 0.
\een
As for the second result in comparison with which in \cite{He2021}, we can remove the logarithm loss by using suitable time weight. 
\item Our method works well for the transport fractional diffusion equation with general sheared velocity $(w(y),0)$:
\beno
\pa_t \th+w(y)\pa_x\th+\nu (-\Delta)^{ s}\th=0,\quad 0<s\leq 1.
\eeno
One may easily modify our proof and obtain the following result:

{\it Suppose that $w\in C^4(\mathbb{T})$ has only non-degenerate critical points, then it holds that
\beno
\|\th_{\neq}(t)\|_{L^2}\leq Ce^{-c\nu^{\frac{2}{2+2 s}}t}\|\th_{in}\|_{L^2}. 
\eeno
}
\end{itemize}

The paper is organized as follows: In section 2, we give a formal explanation of the re-enhancing effect of the non-local term and show the main idea of the proof for the linearized QG equation in critical case. In section 3, we introduce some important auxiliary lemmas and give the energy estimates. In section 4, we establish the decay estimate and complete the proof of Theorem \ref{Thm-cri}. In section 5, we discuss the sub-critical QG and show the influence of different diffusion term. In section 6, we study the toy model and clarify the non-local enhancement.

{\bf Notations}: 
Given a function $f(x,y),$ we denote its Fourier transform in $x$-variable as 
\beno
\hat f(\alpha, y)=\frac{1}{2\pi\d}\int_{\mathbb{T}_{2\pi \d}}e^{-ix\al}f(x,y)dx,
\eeno 
and its Fourier transform in $(x,y)$ as 
\beno
\tilde f(\alpha, \beta)=\mathcal F_y\big(\hat f(\alpha, \cdot)\big)=\frac{1}{4\pi^2\d}\int_{\mathbb{T}_{\delta}^2}e^{-ix\al-iy\beta}f(x,y)dxdy,
\eeno 
where $\alpha$ and $\beta$ are the wave numbers.  When no confusion can arise, we will short $\hat f(\alpha, y)$ and $\tilde f(\alpha, \beta)$ to $\hat f(y)$ and $\widetilde f(\beta)$.




\section{Hypocoercivity and non-local enhancement}
In this section, we give the main idea of the proof for the linearized QG equation in critical case. 
By taking $s=\frac{1}{2}$ of \eqref{eq:L-QG-al}, we get that
\begin{align}\label{eq-ft-x}
	\pa_t\hat \theta+\nu(-\Delta_{\al})^{\frac{1}{2}}\hat \theta+i\al e^{-\nu t}\cos y \big(1-(-\Delta_{\al})^{-\frac{1}{2}}\big)\hat \theta=0.
\end{align}
Let
\begin{align}\label{eq-op-ab}
	A=\sin y \big(1-(-\Delta_{\al})^{-\frac{1}{2}}\big),\quad B=\cos y \big(1-(-\Delta_{\al})^{-\frac{1}{2}}\big),\quad \gamma(t)=\alpha e^{-\nu t},
\end{align}
and
\begin{align*}
	\mathcal L_\nu(\al,t)=\nu(-\Delta_{\al})^{\frac{1}{2}}+i\gamma(t)B.
\end{align*}
Then \eqref{eq-ft-x} can be written as
\begin{align*}
	\pa_t\hat \theta+\mathcal L_\nu \hat \theta=0.
\end{align*}
The non-self-adjoint operator $\mathcal L_\nu$ can be regarded as a sum of a self-adjoint operator $\nu(-\Delta)^{\f12}$ and a anti-self-adjoint operator $i\gamma(t)B$ defining on a weighted Hilbert space equipped with a special inner product 
\begin{align*}
	\langle u, w\rangle_*=\langle u, w-(-\Delta_{\al})^{-\frac{1}{2}}w\rangle,
\end{align*}
where 
\begin{align*}
	\langle u,w \rangle=\int_{\bbT} u(y)\overline{w(y)}dy, \quad 	\|u\|_{L^2}=\langle u,u\rangle^{\frac{1}{2}},
\end{align*}
are the ordinary $L^2$ inner product and $L^2$ norm. 

It is easy to check that the operators $A$ and $B$ are symmetric under this inner product,
\begin{align*}
	\langle u,A w\rangle_*=\langle Au, w\rangle_*,\quad \langle u,B w\rangle_*=\langle Bu, w\rangle_*.
\end{align*}
Let us also introduce the weighted $L^2$ norm $\|u\|_{*}=\langle u, u\rangle_*^{\frac{1}{2}}$. For $|\al|>1$, it is clear that
\begin{align*}
  (1-|\al|^{-1})\|u\|^2_{L^2}\le\|u\|_{*}^2\le\|u\|^2_{L^2}.
\end{align*}

\subsection{Hypocoercivity} 
It is natural to introduce energy functional based on the hypocoercivity method \cite{Villani2009}:
\begin{align*}
  e(t)=\|\hat{\th}\|_{*}^2+\la_1\left\|(-\Delta_{\al})^{\f14}\hat{\th}\right\|_{*}^2+\la_2\left\langle \big[(-\Delta_{\al})^{\f14},B\big]\hat{\th}, (-\Delta_{\al})^{\f14}\hat{\th}\right\rangle_{*}+\la_3\left\|\big[(-\Delta_{\al})^{\f14},B\big]\hat{\th}\right\|_{*}.
\end{align*}
However the energy functional does not work well in our problem. 

Let us introduce the modified energy function:
\begin{align*}
	\Phi(t)=E_0+a_1\nu^{2}t^2E_1+a_2\nu^{2}t^3\mathcal E_1+a_3\nu^{2}t^4\mathcal E_2,
\end{align*}
where 
\begin{align*}
	E_0=\|\hat \theta\|_*^2,\quad E_1=\|\pa_y\hat \theta\|_*^2,\quad \mathcal E_1=-\frac{\al}{|\al|}\Re\langle iA\hat \theta, \pa_y\hat \theta\rangle_*,\quad \mathcal E_2=\|\hat \theta\|_*^2-\|B\hat \theta\|_*^2.
\end{align*}

The key modifications are as follows. First of all, we introduce the time weighted norm which reduces the required regularity of the initial data and is widely used in studying the wellposedness theory in low regularity space \cite{CZZ2016,FK1964}. There are two types of terms in the energy functional. Heuristically, one may say that the `pure' terms $E_0$ and $E_1$ will mainly feel the influence of the symmetric part in $\mathcal L_\nu$, but that the `mixed' term $\mathcal E_1$ will mainly feel the influence of the antisymmetric part in $\mathcal L_\nu$. Here we modified $E_1$ and the corresponding `mixed' term $\mathcal E_1$, so that technically we can avoid treating the commutator of two non-local operators. The last modification is replacing the corresponding `pure' term $\|A\hat{\th}\|_{*}$ by $\mathcal E_2$ which is from the idea in \cite{WZ2019}. Actually $\mathcal E_2$ behaves as $\|A\hat{\th}\|_{*}^2$ and it is conserved for inviscid case $(\nu=0)$.  

\subsection{Formal deduction}
Next, we would like to give a formal mathematical explanation why the non-local term re-enhances the enhanced dissipation by the `shear-diffusion mechanism'. Let us simplify the energy functional by removing the time weight and define
\begin{align}
	E(t)=E_0+a_1\nu^{b_1}E_1+a_2\nu^{b_2}\mathcal E_1+a_3\nu^{b_3}\mathcal E_2.
\end{align}
Note that $\|\cdot\|_{*}$ behaves as the $L^2$ norm and $\mathcal E_2$ behaves as $\|A\hat{\th}\|_{*}^2$ (see Lemma \ref{lem-equivalence}). 

The commutator $A_{\al,\frac{1}{2}}=-[\pa_y,B_{\al,\frac{1}{2}}]=\sin y(1-(-\Delta_{\al})^{-\frac{1}{2}})$ plays an important role in the enhanced dissipation rate. 
Heuristically, due to the existence of nonlocal operator $(-\Delta_{\al})^{-\frac{1}{2}}$ in $A_{\al,\frac{1}{2}}$, we have $A_{\al,\frac{1}{2}}\approx |D|^{-\frac{1}{2}}$ and more generally $A_{\al,\tilde{s}}\approx |D|^{-\tilde{s}}$. While without the nonlocal term, formally $A^S\approx |D|^{-1}$. One may regard Lemma \ref{lem-equivalence} and Lemma \ref{lem-inter} as the evidences. For a more precise description of the commutator $A_{\al,\tilde{s}}$ or $A^S$, we conjecture as follows:

{\it
\no{\bf Conjecture:} Let $\la$ be the eigenvalue of $\mathcal{L}_{\al,\nu, s,\tilde s}$ with largest real part and $\hat\th_{\la}$ be the corresponding eigenfunction, i.e. $\mathcal{L}_{\al,\nu, s,\tilde s}\hat\th_{\la}=\la\hat\th_{\la}$. Then,
\begin{align*}
   C_{\al}^{-1}\|\hat\th_{\la}\|_{H^{-\tilde s}}\leq \|A_{\al,\tilde s}\hat\th_{\la}\|_{L^2}\leq C_{\al} \|\hat\th_{\la}\|_{H^{-\tilde s}},
 \end{align*} 
Let $\la$ be the eigenvalue of $\mathcal{L}^S_{\al,\nu, s}$ with largest real part and $\hat\th_{\la}$ be the corresponding eigenfunction, i.e. $\mathcal{L}^S_{\al,\nu, s}\hat\th_{\la}=\la\th_{\la}$. Then, 
\begin{align*}
  C_{\al}^{-1}\|\hat\th_{\la}\|_{H^{-1}}\leq \|A^S\hat\th_{\la}\|_{L^2}\leq C_{\al} \|\hat\th_{\la}\|_{H^{-1}}.
\end{align*}
Where $C_{\al}\geq 1$ is independent of $\nu$. 
}

Note that if $\tilde{s}=1$, both commutator $A_{\al,1}$ and $A^S$ behave similarly, which also explain the same enhanced dissipation rate for the linearized Navier-Stokes equation and its toy model.

With the above formal argument, by the definitions, formally we regard $E_0$, $E_1$, $\mathcal E_1$ and $\mathcal E_2$ as $\|\hat \theta\|^2_{L^2}$, $\|\hat \theta\|^2_{\dot H^{1}}$, $\|\hat \theta\|^2_{\dot H^{\frac{1}{4}}}$ and $\|\hat \theta\|^2_{\dot H^{-\frac{1}{2}}}$ respectively. 
To balance each part in the energy, we may formally rewrite 
\begin{align*}
	E(t)\approx \|\hat\th\|^2_{L^2}+a_1\nu^{2k}\|\hat\th\|^2_{\dot H^1}+a_2\nu^{\frac{k}{2}}\|\hat\th\|^2_{\dot H^{\frac{1}{4}}}+a_3\nu^{-k}\|\hat\th\|^2_{\dot H^{-\frac{1}{2}}},
\end{align*}
and in the sense of pairing,
\begin{align*}
  E_0\approx \nu^{2k}E_1\approx \nu^{\f{k}{2}}\mathcal{E}_1\approx \nu^{-k}\mathcal{E}_2\approx \nu^{k}\|\hat \theta\|_{\dot{H}^{\f12}}^2.
\end{align*}
The time evolution of $E_0$ and the `mixed' term $\mathcal E_1$ give us that 
\begin{align*}
\f{d}{dt}E_0=-2\nu\|\hat{\th}\|_{H^{\f12}}^2(\approx \nu^{1-k}E_0),\qquad  
\f{d}{dt}\nu^{\frac{k}{2}}\mathcal E_1=\nu^{\frac{k}{2}}\mathcal{E}_2(\approx \nu^{\frac{3k}{2}} E_0)+\text{l.o.t.},
\end{align*}
Thus we may expect that $1-k=\frac{3k}{2}$ which means $k=\frac{2}{5}$. As a result, we finally get
\begin{align*}
	\frac{d}{dt}E(t)\le-\nu^{\frac{3}{5}}E(t),
\end{align*}
which gives the enhanced dissipation rate. 

While for the transport fractional diffusion with operator $\mathcal{L}^S_{\al,\nu,\frac{1}{2}}$, a similar argument gives that
\begin{align*}
	\frac{d}{dt}E^S(t)\le-\nu^{\frac{2}{3}}E^S(t),
\end{align*}
which gives the enhanced dissipation rate. Here 
\beno
E^S(t)=\|\hat\th\|_{L^2}^2+b_1\nu^{\f23}\|\pa_y\hat\th\|_{L^2}^2+b_2\Re\langle iA^S\hat \theta, \pa_y\hat \theta\rangle+b_3\nu^{-\f23}\|A^S\hat\th\|_{L^2}^2
\eeno 
is the modified energy functional. The corresponding time weighted energy functional can be found in section 6.

\section{Energy estimates}
In this section, we study the linearized equation \eqref{eq-ft-x} and establish the energy estimates which will be used to prove Theorem \ref{Thm-cri}.


Recall that 
\begin{align*}
  A=\sin y \big(1-(-\Delta_{\al})^{-\frac{1}{2}}\big),\quad B=\cos y \big(1-(-\Delta_{\al})^{-\frac{1}{2}}\big),
\end{align*}
and
\begin{align*}
  E_0=\|\hat \theta\|_*^2,\quad E_1=\|\pa_y\hat \theta\|_*^2,\quad \mathcal E_1=-\frac{\al}{|\al|}\Re\langle iA\hat \theta, \pa_y\hat \theta\rangle_*,\quad \mathcal E_2=\|\hat \theta\|_*^2-\|B\hat \theta\|_*^2.
\end{align*}
We are aiming to prove the following proposition. 
\begin{proposition}\label{pro-energy}
Suppose that $|\al|\geq c_0>1$. Then it holds that 
\begin{align*}
	\frac{d}{dt}E_0=&-2\nu E_{\frac{1}{2}},\\
	\frac{d}{dt}E_1=&-2\nu E_{\frac{3}{2}}-2|\gamma|\mathcal E_1,\\
	\frac{d}{dt}\mathcal E_1\le&2\nu\|(-\Delta_\al)^{\frac{1}{4}}A\hat \theta\|_{L^2}E_{\frac{3}{2}}^{\frac{1}{2}}+c_2\nu E_{\frac{1}{2}}-|\gamma|\|A\hat \theta\|^2_{L^2}-c_3|\gamma|\sum_{\beta\in\mathbb Z}\f{2\pi\b^2}{(\al^2+\beta^2)^{\frac{1}{2}}}|\tilde\psi(\beta)|^2,\\
	\frac{d}{dt}\mathcal E_2\le&-2\nu E_0-\nu\|(-\Delta_\al)^{\frac{1}{4}}A\hat \theta\|^2_{L^2}+2c_4\nu\|A\hat \theta\|_{L^2}E_0^{\frac{1}{2}},
\end{align*}
where 
\begin{align*}
  E_{\frac{1}{2}}=2\pi\sum_{\beta\in\mathbb Z}((\al^2+\beta^2)^{\frac{1}{2}}-1)|\tilde \theta(\beta)|^2,\quad \quad
E_{\frac{3}{2}}=2\pi\sum_{\beta\in\mathbb Z}\beta^2((\al^2+\beta^2)^{\frac{1}{2}}-1)|\tilde \theta(\beta)|^2,
\end{align*}
and $c_2,c_4$ are positive constants depending only on $c_0$, and $0<c_3\le \frac{1}{2}$ is an independent constant. 
\end{proposition}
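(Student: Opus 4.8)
The plan is to differentiate each of the four energy quantities $E_0, E_1, \mathcal{E}_1, \mathcal{E}_2$ in time along the flow $\pa_t \hat\theta = -\mathcal{L}_\nu \hat\theta = -\nu(-\Delta_\al)^{1/2}\hat\theta - i\gamma(t)B\hat\theta$, systematically exploiting that $\langle\cdot,\cdot\rangle_*$ makes $A$ and $B$ symmetric and makes $(-\Delta_\al)^{1/2}$ self-adjoint while $iB$ is anti-self-adjoint. The starting point for each computation is the splitting $\f{d}{dt}\langle F\hat\theta, G\hat\theta\rangle_* = \langle F(\pa_t\hat\theta), G\hat\theta\rangle_* + \langle F\hat\theta, G(\pa_t\hat\theta)\rangle_*$ (plus, for time-dependent quantities, the explicit $\pa_t$ of coefficients, though here the quantities $E_0,E_1,\mathcal{E}_1,\mathcal{E}_2$ have no explicit time dependence). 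Since the paper uses the Fourier side in $y$, it is cleanest to translate each resulting inner product into a sum over $\beta\in\mathbb{Z}$ using Plancherel; this is where the weighted norms $E_{1/2}, E_{3/2}$ and the spectral-gap-type quantity $\sum_\beta \f{2\pi\b^2}{(\al^2+\beta^2)^{1/2}}|\tilde\psi(\beta)|^2$ arise, with $\psi$ presumably denoting $B\hat\theta$ or a related auxiliary function.

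For $E_0 = \|\hat\theta\|_*^2$: the anti-self-adjoint part $i\gamma B$ contributes zero, and the self-adjoint part gives $\f{d}{dt}E_0 = -2\nu\langle(-\Delta_\al)^{1/2}\hat\theta,\hat\theta\rangle_* = -2\nu E_{1/2}$ once one writes out the multiplier $(\al^2+\beta^2)^{1/2}$ against the weight $1-(\al^2+\beta^2)^{-1/2}$ from $\|\cdot\|_*$; the product $((\al^2+\beta^2)^{1/2})(1-(\al^2+\beta^2)^{-1/2}) = (\al^2+\beta^2)^{1/2}-1$ gives exactly $E_{1/2}$. For $E_1 = \|\pa_y\hat\theta\|_*^2$: the self-adjoint part gives $-2\nu E_{3/2}$ by the same bookkeeping with an extra $\beta^2$; the anti-self-adjoint part $i\gamma B$ does NOT commute with $\pa_y$, so $\f{d}{dt}$ of the $i\gamma B$-contribution is $-2\Re\langle \pa_y(i\gamma B\hat\theta),\pa_y\hat\theta\rangle_* = -2\gamma\Re\langle i[\pa_y,B]\hat\theta,\pa_y\hat\theta\rangle_* = 2\gamma\Re\langle iA\hat\theta,\pa_y\hat\theta\rangle_*$ using $A = -[\pa_y,B]$; matching the sign convention $\mathcal{E}_1 = -\f{\al}{|\al|}\Re\langle iA\hat\theta,\pa_y\hat\theta\rangle_*$ and $\gamma = \al e^{-\nu t}$ yields the clean $-2|\gamma|\mathcal{E}_1$ term. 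For $\mathcal{E}_2 = \|\hat\theta\|_*^2 - \|B\hat\theta\|_*^2$: since $B$ is $*$-symmetric and commutes with itself, the $i\gamma B$ part drops out entirely (this is the "conserved for $\nu=0$" property mentioned after the energy definition), and only the $\nu(-\Delta_\al)^{1/2}$ part survives, producing $-2\nu(E_0^{(1/2)} - \langle(-\Delta_\al)^{1/2}B\hat\theta,B\hat\theta\rangle_*)$; expanding $(-\Delta_\al)^{1/2}$ through the commutator $[(-\Delta_\al)^{1/2},B]$ and using $B = \cos y(1-(-\Delta_\al)^{-1/2})$ gives the stated $-2\nu E_0 - \nu\|(-\Delta_\al)^{1/4}A\hat\theta\|^2_{L^2}$ main terms with the cross term $2c_4\nu\|A\hat\theta\|_{L^2}E_0^{1/2}$ controlled by Cauchy–Schwarz after bounding the commutator remainder.

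The hard step is $\f{d}{dt}\mathcal{E}_1$, where $\mathcal{E}_1 = -\f{\al}{|\al|}\Re\langle iA\hat\theta,\pa_y\hat\theta\rangle_*$ is the genuinely "mixed" term that must capture the transfer between the symmetric and anti-symmetric parts. Differentiating produces four kinds of terms: (i) from the $\nu(-\Delta_\al)^{1/2}$ part acting on either slot — these give the dissipative error $2\nu\|(-\Delta_\al)^{1/4}A\hat\theta\|_{L^2}E_{3/2}^{1/2}$ after moving half a derivative onto each factor and Cauchy–Schwarz, plus a lower-order $c_2\nu E_{1/2}$ from the commutator $[(-\Delta_\al)^{1/2},A]$ and from the weight in $\|\cdot\|_*$; (ii) from the $i\gamma B$ part, the leading gain: $-\f{\al}{|\al|}\Re\langle iA(i\gamma B\hat\theta),\pa_y\hat\theta\rangle_* - \f{\al}{|\al|}\Re\langle iA\hat\theta,\pa_y(i\gamma B\hat\theta)\rangle_*$, which after commuting and using $A = -[\pa_y,B]$ and the symmetry of $A,B$ should collapse to $-|\gamma|\langle A\hat\theta,A\hat\theta\rangle_{*}$-type coercive terms, i.e. $-|\gamma|\|A\hat\theta\|^2_{L^2}$ plus the Fourier-side nonnegative quantity $c_3|\gamma|\sum_\beta \f{2\pi\b^2}{(\al^2+\beta^2)^{1/2}}|\tilde\psi(\beta)|^2$ (the latter coming from the $(-\Delta_\al)^{-1/2}$ piece inside $A$ interacting with $\pa_y$). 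The delicate bookkeeping is (a) tracking that the commutators of the nonlocal multiplier $(-\Delta_\al)^{-1/2}$ with $\cos y$ and $\sin y$ (which are Fourier-shift operators in $\beta$) produce only the claimed sign-definite and controllable pieces — here Lemma~\ref{lem-equivalence} and the interpolation lemma should be invoked to bound the remainders — and (b) getting the precise constant $0<c_3\le\f12$ independent of $\nu$, which requires a careful lower bound on a quadratic form in the $\tilde\theta(\beta)$. I expect this commutator accounting for $\mathcal{E}_1$, in particular isolating the correct sign-definite term $-|\gamma|\|A\hat\theta\|_{L^2}^2$ and the extra nonnegative spectral sum while dumping everything else into the $O(\nu)$ errors, to be the main obstacle; the other three identities are essentially forced once the inner-product structure is set up.
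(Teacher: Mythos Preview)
Your overall scheme --- differentiate each energy, use that $iB$ is $*$-anti-self-adjoint and $(-\Delta_\al)^{1/2}$ is $*$-self-adjoint, track commutators, and control remainders by Cauchy--Schwarz --- is exactly the paper's approach, and your treatments of $E_0$ and $E_1$ are correct and match the paper line for line. Two concrete points, however, diverge from the paper and would cause trouble if you tried to execute your outline as written.

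First, the auxiliary function $\hat\psi$ is \emph{not} $B\hat\theta$; it is defined by $(-\Delta_\al)^{1/2}\hat\psi=\hat\theta$, i.e.\ $\tilde\psi(\beta)=(\al^2+\beta^2)^{-1/2}\tilde\theta(\beta)$ (this is fixed in Lemma~\ref{lem-equivalence}). Second, and more importantly, the negative spectral sum $-c_3|\gamma|\sum_\beta \frac{2\pi\beta^2}{(\al^2+\beta^2)^{1/2}}|\tilde\psi(\beta)|^2$ in the $\mathcal E_1$ estimate does \emph{not} come from ``the $(-\Delta_\al)^{-1/2}$ piece inside $A$ interacting with $\pa_y$'' nor from Lemma~\ref{lem-equivalence}. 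When you differentiate $\mathcal E_1$ and push the $i\gamma B$ through, you get, besides $-|\gamma|\langle A\hat\theta,A\hat\theta\rangle_*$, the term $|\gamma|\,\Re\langle (BA-AB)\hat\theta,\pa_y\hat\theta\rangle_*$. The paper computes this commutator explicitly on the Fourier side (Lemma~\ref{lem-com-ba}): since $BA-AB=\big(\sin y(-\Delta_\al)^{-1/2}\cos y-\cos y(-\Delta_\al)^{-1/2}\sin y\big)(1-(-\Delta_\al)^{-1/2})$, one finds that $BA-AB$ is a Fourier multiplier with a definite sign, and pairing it with $\pa_y\hat\theta$ in $\langle\cdot,\cdot\rangle_*$ gives exactly the claimed nonpositive spectral sum with an absolute constant $c_3\in(0,\tfrac12]$. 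Lemma~\ref{lem-equivalence} plays no role here; it is used only later, in the proof of Lemma~\ref{lem-decay}.

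For $\mathcal E_2$ the paper does not proceed via the commutator $[(-\Delta_\al)^{1/2},B]$ as you suggest. Instead it applies the algebraic identity
\[
\langle u,w\rangle_*-\langle Bu,Bw\rangle_*=\langle(-\Delta_\al)^{-1/2}u,(1-(-\Delta_\al)^{-1/2})w\rangle+\langle Au,Aw\rangle+\langle(-\Delta_\al)^{-1/2}Bu,Bw\rangle
\]
with $u=(-\Delta_\al)^{1/2}\hat\theta$, $w=\hat\theta$, which immediately splits $\frac{d}{dt}\mathcal E_2$ into (i) $-2\nu E_0$, (ii) $-2\nu\Re\langle A(-\Delta_\al)^{1/2}\hat\theta,A\hat\theta\rangle$, giving $-2\nu\|(-\Delta_\al)^{1/4}A\hat\theta\|_{L^2}^2$ plus the commutator error $2c_4\nu\|A\hat\theta\|_{L^2}E_0^{1/2}$, and (iii) $-2\nu\Re\langle(-\Delta_\al)^{-1/2}B(-\Delta_\al)^{1/2}\hat\theta,B\hat\theta\rangle$, which is shown by a direct Fourier computation to be bounded below by $-\nu\|(-\Delta_\al)^{1/4}A\hat\theta\|_{L^2}^2$. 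Your commutator route might also be made to work, but this splitting is what actually delivers the clean coefficients in the stated inequality.
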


\subsection{Useful lemmas}
Let us first introduce some important lemmas.
\begin{lemma}\label{lem-equivalence}
	If $|\al|\ge c_0>1$, and $(-\Delta_{\al})^{\frac{1}{2}}\hat\psi=\hat \theta$, then
	\begin{align*}
		\|A\hat \theta\|_{L^2}^2+2\pi\sum_{\beta\in\mathbb Z}((\al^2+\beta^2)^{\frac{1}{2}}-1)|\tilde\psi(\beta)|^2\le\mathcal E_2\le 2\Big(\|A\hat \theta\|_{L^2}^2+2\pi\sum_{\beta\in\mathbb Z}((\al^2+\beta^2)^{\frac{1}{2}}-1)|\tilde\psi(\beta)|^2\Big).
	\end{align*}
\end{lemma}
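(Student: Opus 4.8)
The plan is to compute $\mathcal E_2$ \emph{exactly} and to read off from the result the term $\|A\hat\theta\|_{L^2}^2$ together with an explicit nonnegative remainder. Write $N=(-\Delta_{\al})^{\frac12}$ and $M=1-(-\Delta_{\al})^{-\frac12}=1-N^{-1}$, so that $\langle u,w\rangle_{*}=\langle u,Mw\rangle$, while $B\hat\theta=\cos y\,(M\hat\theta)$ and $A\hat\theta=\sin y\,(M\hat\theta)$; the hypothesis means $\hat\theta=N\hat\psi$, hence $h:=M\hat\theta=(N-1)\hat\psi$. First I would expand
\begin{align*}
\mathcal E_2=\langle\hat\theta,M\hat\theta\rangle-\langle B\hat\theta,MB\hat\theta\rangle
=\langle\hat\theta,h\rangle-\|\cos y\,h\|_{L^2}^2+\big\langle\cos y\,h,\,N^{-1}\cos y\,h\big\rangle,
\end{align*}
then substitute $\|\cos y\,h\|_{L^2}^2=\|h\|_{L^2}^2-\|\sin y\,h\|_{L^2}^2=\|h\|_{L^2}^2-\|A\hat\theta\|_{L^2}^2$. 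Passing to the $y$-Fourier side, the combination $\langle\hat\theta,h\rangle-\|h\|_{L^2}^2$ collapses: writing $\mu_\beta:=(\al^2+\beta^2)^{\frac12}$ and using $\hat\theta=N\hat\psi$, $h=(N-1)\hat\psi$, each $\beta$-mode contributes $\mu_\beta(\mu_\beta-1)-(\mu_\beta-1)^2=\mu_\beta-1$, so that $\langle\hat\theta,h\rangle-\|h\|_{L^2}^2=2\pi\sum_{\beta}(\mu_\beta-1)|\tilde\psi(\beta)|^2=:Y$. This produces the exact identity
\begin{align*}
\mathcal E_2=\|A\hat\theta\|_{L^2}^2+Y+Z,\qquad Z:=\big\langle\cos y\,h,\,N^{-1}\cos y\,h\big\rangle\ge 0,
\end{align*}
the nonnegativity being immediate since $N^{-1}$ is a positive operator; in particular $\mathcal E_2\ge\|A\hat\theta\|_{L^2}^2+Y$, the claimed lower bound.

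For the upper bound it suffices to show $Z\le Y$. Let $g_\beta$ denote the $\beta$-th $y$-Fourier coefficient of $h$; since $h=(N-1)\hat\psi$ we have $g_\beta=(\mu_\beta-1)\tilde\psi(\beta)$, and the Fourier coefficients of $\cos y\,h$ are $\frac12(g_{\beta-1}+g_{\beta+1})$. Hence by Plancherel, the inequality $|a+b|^2\le 2|a|^2+2|b|^2$, and a reindexing,
\begin{align*}
Z=2\pi\sum_{\beta}\frac{|g_{\beta-1}+g_{\beta+1}|^2}{4\mu_\beta}\le 2\pi\sum_{\beta}\frac{|g_{\beta-1}|^2+|g_{\beta+1}|^2}{2\mu_\beta}
=2\pi\sum_{\gamma}\frac12\Big(\frac{1}{\mu_{\gamma+1}}+\frac{1}{\mu_{\gamma-1}}\Big)|g_\gamma|^2 .
\end{align*}
On the other hand, since $\mu_\beta\ge|\al|>1$, we may write $Y=2\pi\sum_{\beta}|g_\beta|^2/(\mu_\beta-1)$. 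Therefore $Z\le Y$ will follow from the elementary scalar bound $\mu_{\gamma\pm1}\ge\mu_\gamma-1$ for every $\gamma\in\mathbb Z$: by the evenness $\mu_{-\gamma}=\mu_\gamma$ it suffices to treat $\gamma\ge0$, where $\mu_{\gamma+1}\ge\mu_\gamma$ is clear; and $\mu_{\gamma-1}\ge\mu_\gamma-1$ holds for $\gamma=0$ because $\mu_{-1}=\mu_1\ge\mu_0$, and for $\gamma\ge1$ by squaring the positive inequality $\mu_{\gamma-1}+1\ge\mu_\gamma$, which reduces to $\mu_{\gamma-1}\ge\gamma-1$, true since $\mu_{\gamma-1}\ge|\gamma-1|$. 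Combining everything, $\mathcal E_2=\|A\hat\theta\|_{L^2}^2+Y+Z\le\|A\hat\theta\|_{L^2}^2+2Y\le 2\big(\|A\hat\theta\|_{L^2}^2+Y\big)$.

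The one genuine choice in the argument is the previous step: the cross term $Z$ must be compared against $Y$ rather than absorbed into $\|A\hat\theta\|_{L^2}^2$, because a naive operator-norm bound $Z\le\|N^{-1}\|\,\|\cos y\,h\|_{L^2}^2$ loses too much at high $y$-frequency; and the factor $2$ wasted in $|a+b|^2\le 2|a|^2+2|b|^2$ is exactly the slack that the factor $2$ in the statement permits. The remaining ingredients --- the algebraic identity for $\mathcal E_2$, the Fourier reindexings (legitimate since the energies involved are finite), and the scalar inequality $\mu_{\gamma\pm1}\ge\mu_\gamma-1$ --- are all routine.
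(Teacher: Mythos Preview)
Your proof is correct and follows essentially the same route as the paper: both arrive at the exact identity $\mathcal E_2=\|A\hat\theta\|_{L^2}^2+Y+Z$ with $Z=\langle(-\Delta_\al)^{-\frac12}B\hat\theta,B\hat\theta\rangle\ge0$, obtain the lower bound by dropping $Z$, and prove $Z\le Y$ by expanding in Fourier, applying $|a+b|^2\le 2|a|^2+2|b|^2$, reindexing, and checking the scalar inequality $\mu_{\gamma\pm1}\ge\mu_\gamma-1$ (the paper writes it equivalently as $\big(\frac{\mu_\beta-1}{\mu_{\beta\pm1}}\big)^2<1$). The only cosmetic difference is how the identity for $\mathcal E_2$ is derived---you use $\|\cos y\,h\|^2=\|h\|^2-\|\sin y\,h\|^2$ directly, while the paper records the slightly more general polarization identity \eqref{eq-uw}---but the content is the same.
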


\begin{proof}
From the definition, we have
\begin{align*}
  &\langle u,w\rangle_*=\big\langle \big(1-(-\Delta_{\al})^{-\frac{1}{2}}\big)u,\big(1-(-\Delta_{\al})^{-\frac{1}{2}}\big)w \big\rangle +\big\langle (-\Delta_{\al})^{-\frac{1}{2}}u,\big(1-(-\Delta_{\al})^{-\frac{1}{2}}\big)w \big\rangle,\\
  &\big\langle \big(1-(-\Delta_{\al})^{-\frac{1}{2}}\big)u,\big(1-(-\Delta_{\al})^{-\frac{1}{2}}\big)w \big\rangle=\langle Au,Aw\rangle+\langle Bu,Bw\rangle,\\
  &\langle Bu,Bw\rangle=\langle Bu,Bw\rangle_*+\langle (-\Delta_{\al})^{-\frac{1}{2}}Bu,Bw\rangle.
\end{align*}
It follows that
\begin{equation}\label{eq-uw}
  \begin{aligned}    
    &\langle u,w\rangle_*-\langle Bu,Bw\rangle_*\\
  =&\big\langle (-\Delta_{\al})^{-\frac{1}{2}}u,\big(1-(-\Delta_{\al})^{-\frac{1}{2}}\big)w \big\rangle+\langle Au,Aw\rangle+\langle (-\Delta_{\al})^{-\frac{1}{2}}Bu,Bw\rangle. 
  \end{aligned}
\end{equation}
As a result, it holds that
	\begin{align*}
		\mathcal E_2=&\|\hat \theta\|_*^2-\|B\hat \theta\|_*^2\\
    =&\langle (-\Delta_{\al})^{-\frac{1}{2}}\hat \theta, \big(1-(-\Delta_{\al})^{-\frac{1}{2}}\big)\hat \theta\rangle+\langle A\hat \theta, A\hat \theta\rangle+\langle (-\Delta_{\al})^{-\frac{1}{2}}B\hat \theta, B\hat \theta\rangle\\
		\ge&\langle A\hat \theta, A\hat \theta\rangle+\langle \hat\psi,\big((-\Delta_{\al})^{\frac{1}{2}}-1\big)\hat\psi \rangle\\
		=&\|A\hat \theta\|_{L^2}^2+2\pi\sum_{\beta\in\mathbb Z}((\al^2+\beta^2)^{\frac{1}{2}}-1)|\tilde\psi|^2.
	\end{align*}
In the last equality, we use the Plancherel theorem.

To prove the upper bound, it suffice to estimate $\langle (-\Delta_{\al})^{-\frac{1}{2}}B\hat \theta, B\hat \theta\rangle$. 

Let
\begin{align}\label{eq-def-u}
 \hat u=(1-(-\Delta_{\al})^{-\frac{1}{2}})\hat \theta,
\end{align}
then, by the fact that
  \begin{align*}
    \Big(\frac{(\al^2+\beta^2)^{\frac{1}{2}}-1}{(\al^2+(\beta-1)^2)^{\frac{1}{2}}}\Big)^2=\frac{\al^2+\b^2+1-2(\al^2+\beta^2)^{\frac{1}{2}}}{{\al^2+\b^2+1-2\b}}<1,
  \end{align*}
we deduce that
	\begin{align*}
		&\langle (-\Delta_{\al})^{-\frac{1}{2}}B\hat \theta, B\hat \theta\rangle
		=\langle (-\Delta_{\al})^{-\frac{1}{2}}\cos y \hat u, \cos y \hat u\rangle\\
    =&\int_\bbT (-\Delta_{\al})^{-\frac{1}{2}}\frac{e^{iy}+e^{-iy}}{2} \hat u\cdot\overline{\frac{e^{iy}+e^{-iy}}{2} \hat u}dy\\
		=&\frac{\pi}{2}\sum_{\beta\in\mathbb Z}\frac{|\tilde u(\beta-1)+\tilde u(\beta+1)|^2}{(\al^2+\beta^2)^{\frac{1}{2}}}\\
		\le&\pi\sum_{\beta\in\mathbb Z}\frac{|\tilde u(\beta-1)|^2+|\tilde u(\beta+1)|^2}{(\al^2+\beta^2)^{\frac{1}{2}}}\\
		=&\pi\sum_{\beta\in\mathbb Z}\frac{((\al^2+(\beta-1)^2)^{\frac{1}{2}}-1)^2|\tilde \psi(\beta-1)|^2+((\al^2+(\beta+1)^2)^{\frac{1}{2}}-1)^2|\tilde \psi(\beta+1)|^2}{(\al^2+\beta^2)^{\frac{1}{2}}}\\	
		=&2\pi\sum_{\beta\in\mathbb Z}\Big(\frac{(\al^2+\beta^2)^{\frac{1}{2}}-1}{2(\al^2+(\beta+1)^2)^{\frac{1}{2}}}+\frac{(\al^2+\beta^2)^{\frac{1}{2}}-1}{2(\al^2+(\beta-1)^2)^{\frac{1}{2}}}\Big)((\al^2+\beta^2)^{\frac{1}{2}}-1)|\tilde \psi(\beta)|^2\\
		\le&2\pi\sum_{\beta\in\mathbb Z}((\al^2+\beta^2)^{\frac{1}{2}}-1)|\tilde\psi|^2.
	\end{align*}
  This completes the proof of this lemma.
\end{proof}
\begin{lemma}\label{lem-est-me2}
	Under the same assumptions of Lemma \ref{lem-equivalence}, it holds that
	\begin{align*}
		\mathcal E_2\le& c_1\Big(|\al|^{\frac{1}{2}}\|A\hat \theta\|_{L^2}^2+\sum_{\beta\in\mathbb Z}\frac{2\pi|\al|^{\frac{1}{2}}\beta^2}{(\al^2+\beta^2)^{\frac{1}{2}}}|\tilde\psi(\beta)|^2\Big),
	\end{align*}
	where $c_1$ is a constant depending only on $c_0$.
\end{lemma}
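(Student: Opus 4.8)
The plan is to upgrade Lemma \ref{lem-equivalence} rather than recompute $\mathcal E_2$ from scratch. Write $r_\beta:=(\al^2+\beta^2)^{\frac12}$ and $\tilde u(\beta):=(r_\beta-1)\tilde\psi(\beta)$, so that (with $\hat u$ as in \eqref{eq-def-u}) $A\hat\theta=\sin y\,\hat u$ and
\begin{align*}
\sum_\beta(r_\beta-1)|\tilde\psi(\beta)|^2=\sum_\beta\frac{|\tilde u(\beta)|^2}{r_\beta-1},\qquad
\sum_\beta\frac{\beta^2}{r_\beta}|\tilde\psi(\beta)|^2=\sum_\beta\frac{\beta^2}{r_\beta(r_\beta-1)^2}|\tilde u(\beta)|^2 .
\end{align*}
By the upper bound $\mathcal E_2\le 2\big(\|A\hat\theta\|_{L^2}^2+2\pi\sum_\beta(r_\beta-1)|\tilde\psi(\beta)|^2\big)$ of Lemma \ref{lem-equivalence}, and since $\|A\hat\theta\|_{L^2}^2\le|\al|^{\frac12}\|A\hat\theta\|_{L^2}^2$ as $|\al|\ge c_0>1$, it suffices to dominate $\sum_\beta\frac{|\tilde u(\beta)|^2}{r_\beta-1}$ by the right-hand side of the lemma.

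First I would split the $\beta$-sum at a threshold $N\sim_{c_0}|\al|^{3/4}$. From $r_\beta(r_\beta-1)=\al^2+\beta^2-r_\beta\le\al^2+\beta^2$ one sees that $\frac1{r_\beta-1}\le|\al|^{\frac12}\frac{\beta^2}{r_\beta(r_\beta-1)^2}$ as soon as $\beta^2\ge\al^2/(|\al|^{\frac12}-1)$, which holds for all $|\beta|\ge N:=\big\lceil(1-c_0^{-1/2})^{-1/2}|\al|^{3/4}\big\rceil$; hence $\sum_{|\beta|\ge N}\frac{|\tilde u(\beta)|^2}{r_\beta-1}\le|\al|^{\frac12}\sum_\beta\frac{\beta^2}{r_\beta}|\tilde\psi(\beta)|^2$ is already absorbed into the weighted term. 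For $|\beta|<N$ we have $r_\beta-1\ge|\al|-1\gtrsim_{c_0}|\al|$, so the whole matter reduces to the low-frequency estimate
\begin{align*}
\frac1{|\al|}\sum_{|\beta|<N}|\tilde u(\beta)|^2\ \lesssim_{c_0}\ |\al|^{\frac12}\|\sin y\,\hat u\|_{L^2}^2+|\al|^{\frac12}\sum_\beta\frac{\beta^2}{r_\beta(r_\beta-1)^2}|\tilde u(\beta)|^2 .
\end{align*}

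To prove this I would localize $\hat u$ in the $y$-frequency $\beta$: fix $\chi\in C_c^\infty(\mathbb R)$ with $\chi\equiv1$ on $[-1,1]$, $\mathrm{supp}\,\chi\subset[-2,2]$, $0\le\chi\le1$, put $g(y):=\sum_\beta\chi(\beta/N)\tilde u(\beta)e^{iy\beta}$ (so $\sum_{|\beta|<N}|\tilde u(\beta)|^2\le\frac1{2\pi}\|g\|_{L^2}^2$), and apply the elementary interpolation inequality on $\bbT$,
\begin{align*}
\|g\|_{L^2(\bbT)}^2\ \lesssim\ \lambda^2\|\sin y\,g\|_{L^2}^2+\lambda^{-2}\|\pa_y g\|_{L^2}^2\qquad(\lambda\ge1),
\end{align*}
with $\lambda=N\sim_{c_0}|\al|^{3/4}$. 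Since $g$ is supported at frequencies $|\beta|\le2N$, where $r_\beta\sim_{c_0}|\al|$, the $\pa_y$-term gives $\lambda^{-2}\|\pa_y g\|_{L^2}^2\lesssim_{c_0}N^{-2}\al^3\sum_\beta\frac{\beta^2}{r_\beta(r_\beta-1)^2}|\tilde u(\beta)|^2\sim_{c_0}|\al|^{3/2}\sum_\beta\frac{\beta^2}{r_\beta(r_\beta-1)^2}|\tilde u(\beta)|^2$, the desired second term; and $\|\sin y\,g\|_{L^2}\le\|\sin y\,\hat u\|_{L^2}+\|[\sin y,\chi(D/N)]\hat u\|_{L^2}$, where the commutator has symbol of size $O(N^{-1})$ supported at $|\beta|\sim N$, whence $\lambda^2\|[\sin y,\chi(D/N)]\hat u\|_{L^2}^2\lesssim N^2\cdot N^{-2}\sum_{|\beta|\sim N}|\tilde u(\beta)|^2\lesssim_{c_0}|\al|^{3/2}\sum_\beta\frac{\beta^2}{r_\beta(r_\beta-1)^2}|\tilde u(\beta)|^2$ (using $\beta^2\sim N^2\sim|\al|^{3/2}$, $r_\beta\sim|\al|$ there). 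Dividing by $|\al|$ yields the low-frequency estimate and hence the lemma; the interpolation inequality on $\bbT$ is in turn proved by treating separately $\{|\sin y|\ge\lambda^{-1}\}$, where $\|g\|_{L^2}^2\le\lambda^2\|\sin y\,g\|_{L^2}^2$, and the two arcs of length $\sim\lambda^{-1}$ around $y=0,\pi$, on each of which one writes $g(y)=g(y_*)-\int_y^{y_*}\pa_y g$ for an endpoint $y_*$ and bounds $|g(y_*)|^2$ by a trace estimate on a sub-arc of length $\lambda^{-1}$ lying inside the good region.

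The hard part will be exactly this low-frequency estimate, because of the two competing degeneracies that pin down the window scale $|\al|^{-3/4}$: the vanishing of $\sin y$ at $y=0,\pi$, which forces the $\pa_y$-term into the estimate and is balanced against the weighted term $\sum_\beta\frac{\beta^2}{r_\beta}|\tilde\psi(\beta)|^2$ precisely by the choice $\lambda\sim|\al|^{3/4}$; and the requirement that the error of the frequency localization stay under control — a sharp cut-off $\mathbf 1_{|\beta|<N}$ would produce a commutator that, after multiplication by $\lambda^2\sim|\al|^{3/2}$, overshoots the weighted term by a factor $|\al|^{3/2}$, so a smooth cut-off adapted to the scale $N\sim|\al|^{3/4}$ is essential. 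Everything else — the reduction via Lemma \ref{lem-equivalence}, the threshold comparison, and the one-dimensional interpolation inequality — is routine and only affects the value of the $c_0$-dependent constant $c_1$.
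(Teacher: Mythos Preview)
Your argument is correct, but it follows a genuinely different route from the paper's. Both start from the upper bound in Lemma~\ref{lem-equivalence}, reducing to a bound on $\sum_\beta(r_\beta-1)|\tilde\psi(\beta)|^2$, and both split this sum into a high-frequency piece handled by direct pointwise comparison and a low-frequency piece requiring real work. The differences are: (i) the paper splits at the threshold $|\beta|=|\al|$ rather than $|\al|^{3/4}$; (ii) for the low-frequency block the paper stays entirely on the Fourier side, applying a discrete summation-by-parts identity $\sum_\beta|h(\beta)|^2=\tfrac12\sum_\beta\beta\big(|h(\beta-1)|^2-|h(\beta+1)|^2\big)$ to $h=f\tilde\psi$ with a piecewise-linear cut-off $f$, then uses Cauchy--Schwarz with weight $|\al|^{1/2}$ to produce differences $\tilde\psi(\beta-1)-\tilde\psi(\beta+1)$, which are tied back to $\|A\hat\theta\|_{L^2}^2$ through an explicit Fourier expansion (inequality~\eqref{ineq-psi-A}); (iii) the paper must treat $|\al|<100$ and the mode $\beta=0$ by separate ad hoc arguments in order to absorb a leftover term $|\al|^{1/2}\sum_{|\beta|\le|\al|}|\tilde\psi|^2$. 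Your approach replaces the Fourier-side summation by parts with the physical-space uncertainty inequality $\|g\|_{L^2}^2\lesssim\lambda^2\|\sin y\,g\|_{L^2}^2+\lambda^{-2}\|\pa_y g\|_{L^2}^2$ applied to a smoothly frequency-localized piece of $\hat u$; the scale $\lambda\sim N\sim|\al|^{3/4}$ is forced by balancing both terms against the right-hand side, and the smooth cut-off keeps the commutator with $\sin y$ harmless. What your route buys is a cleaner, case-free argument that makes the ``$A\approx|D|^{-1/2}$'' heuristic transparent as an uncertainty principle; what the paper's route buys is a completely explicit, self-contained Fourier computation with no auxiliary interpolation lemma and readily trackable constants.
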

\begin{proof}
As Lemma \ref{lem-equivalence} shows that
	\begin{align*}
		\mathcal E_2\le 2\Big(\|A\hat \theta\|_{L^2}^2+2\pi\sum_{\beta\in\mathbb Z}((\al^2+\beta^2)^{\frac{1}{2}}-1)|\tilde\psi|^2(\al,\beta)\Big),
	\end{align*}
it suffice to prove that
	\begin{align*}
		2\pi\sum_{\beta\in\mathbb Z}((\al^2+\beta^2)^{\frac{1}{2}}-1)|\tilde\psi(\beta)|^2\le C\Big(|\al|^{\frac{1}{2}}\|A\hat \theta\|_{L^2}^2+\sum_{\beta\in\mathbb Z}\frac{2\pi|\al|^{\frac{1}{2}}\beta^2}{(\al^2+\beta^2)^{\frac{1}{2}}}|\tilde\psi(\beta)|^2\Big)
	\end{align*}
for some constant $C$ depends only on $c_0$.

For $|\beta|\ge|\al|$, it is clear 
	\begin{align*}
		((\al^2+\beta^2)^{\frac{1}{2}}-1)|\tilde\psi(\beta)|^2\le2\frac{|\al|^{\frac{1}{2}}\beta^2}{(\al^2+\beta^2)^{\frac{1}{2}}}|\tilde\psi(\beta)|^2,
	\end{align*}
so we only need to focus on the case $|\beta|\le|\al|$.

However, when $|\beta|\le|\al|$, it holds that
  \begin{align*}
    \left((\al^2+\beta^2)^{\frac{1}{2}}-1-\frac{|\al|^{\frac{1}{2}}\beta^2}{(\al^2+\beta^2)^{\frac{1}{2}}}\right)|\tilde\psi(\beta)|^2
    =\left(\frac{\al^2+\b^2-|\al|^{\frac{1}{2}}\beta^2}{(\al^2+\beta^2)^{\frac{1}{2}}}-1\right)|\tilde\psi(\beta)|^2
    \le(|\al|-1)|\tilde\psi(\beta)|^2,
  \end{align*}
so the problem reduces to the estimate of
  \begin{align*}
    \sum_{\beta\in\mathbb Z,|\beta|\le|\al|}(|\al|-1)|\tilde\psi(\beta)|^2.
  \end{align*}
We start from the case $|\al|\ge100$, for which $|\al|-1\ge9|\al|^{\frac{1}{2}}$. We define
  \begin{equation*}
    f(\beta)=\left\{
      \begin{array}{ll}
        1,&|\beta|\le|\al|,\\
        2- \frac{|\beta|}{|\al|},&  |\al|<|\beta|\le2|\al|,\\
        0,&2|\al|<|\beta|,
      \end{array}
    \right.
  \end{equation*}
and deduce that
\begin{equation}\label{eq-estimate-psi}
    \begin{aligned}
    &\sum_{\beta\in\mathbb Z,|\beta|\le|\al|}(|\al|-1)|\tilde\psi(\beta)|^2\\
    \le&\sum_{\beta\in\mathbb Z}(|\al|-1)|f(\beta)\tilde\psi(\beta)|^2\\
    =&\frac{1}{2}\sum_{\beta\in\mathbb Z}(|\al|-1)\beta|f(\beta-1)\tilde\psi(\beta-1)|^2-(|\al|-1)\beta|f(\beta+1)\tilde\psi(\beta+1)|^2\\
    =&\frac{1}{2}\sum_{\beta\in\mathbb Z}(|\al|-1)\beta\big(f(\beta-1)\tilde\psi(\beta-1)-f(\beta+1)\tilde\psi(\beta+1)\big)\\
    &\qquad\qquad\qquad\qquad\qquad\qquad\qquad\cdot\overline{\big(f(\beta-1)\tilde\psi(\beta-1)+f(\beta+1)\tilde\psi(\beta+1)\big)}\\
    \le&\frac{1}{4}|\al|^{\frac{1}{2}}(|\al|-1)^2\sum_{\beta\in\mathbb Z}|f(\beta-1)\tilde\psi(\beta-1)-f(\beta+1)\tilde\psi(\beta+1)|^2\\
    &+\frac{1}{4}|\al|^{-\frac{1}{2}}\sum_{\beta\in\mathbb Z}|\beta|^2|f(\beta-1)\tilde\psi(\beta-1)+f(\beta+1)\tilde\psi(\beta+1)|^2\\
    \eqdefa&I_1+I_2.
  \end{aligned}
\end{equation}

From the definition of $f$, one can see that
  \begin{align*}
    I_1=&\frac{1}{4}|\al|^{\frac{1}{2}}(|\al|-1)^2\sum_{\beta\in\mathbb Z}|f(\beta-1)\tilde\psi(\beta-1)-f(\beta+1)\tilde\psi(\beta+1)|^2\\
    =&\frac{1}{4}|\al|^{\frac{1}{2}}(|\al|-1)^2\sum_{\beta\in\mathbb Z}\Big|f(\beta)\tilde\psi(\beta-1)-f(\beta)\tilde\psi(\beta+1)\\
    &\qquad\qquad\qquad\qquad\qquad +\big(f(\beta-1)-f(\beta)\big)\tilde\psi(\beta-1)-\big(f(\beta+1)-f(\beta)\big)\tilde\psi(\beta+1)\Big|^2\\
    \le&\frac{1}{2}|\al|^{\frac{1}{2}}(|\al|-1)^2\sum_{\beta\in\mathbb Z}f^2(\beta)|\tilde\psi(\beta-1)-\tilde\psi(\beta+1)|^2+2 \frac{(|\al|-1)^2}{|\al|^{3/2}}  \sum_{\beta,|\al|\le|\beta|\le2|\al|}|\tilde\psi(\beta)|^2\\
    \le&\frac{2}{\pi}|\al|^{\frac{1}{2}}\|A\hat \theta\|^2_{L^2}+4|\al|^{\frac{1}{2}}\sum_{|\beta|\le|\al|}|\tilde\psi(\beta)|^2+10\sum_{\beta\in\mathbb Z}\frac{|\al|^{\frac{1}{2}}\beta^2}{(\al^2+\beta^2)^{\frac{1}{2}}}|\tilde\psi(\beta)|^2.
  \end{align*}
In the last inequality, we use the fact that
  \begin{equation}\label{ineq-psi-A}
    \ |\al|^{\frac{1}{2}}\sum_{\beta\in\mathbb Z}\big((\al^2+\beta^2)^{\frac{1}{2}}-1\big)^2|\tilde\psi(\beta-1)-\tilde\psi(\beta+1)|^2\le \frac{4}{\pi}|\al|^{\frac{1}{2}}\|A\hat \theta\|^2_{L^2}+8|\al|^{\frac{1}{2}}\sum_{\beta\in\mathbb Z}|\tilde\psi(\beta)|^2.
  \end{equation}
Indeed, from the definition of $A$, we have
	\begin{align*}
		&|\al|^{\frac{1}{2}}\|A\hat \theta\|^2_{L^2}		
		=|\al|^{\frac{1}{2}}\|\sin y\big((-\Delta_\alpha)^{\frac{1}{2}}-1\big)\hat\psi\|^2_{L^2}\\
    =&|\al|^{\frac{1}{2}}\int_\bbT \frac{e^{iy}-e^{-iy}}{2i}\big((-\Delta_\alpha)^{\frac{1}{2}}-1\big)\hat\psi\cdot\overline{\frac{e^{iy}-e^{-iy}}{2i}\big((-\Delta_\alpha)^{\frac{1}{2}}-1\big)\hat\psi}dy\\
		=&\frac{\pi|\al|^{\frac{1}{2}}}{2}\sum_{\beta\in\mathbb Z}\bigg|\big((\al^2+\beta^2)^{\frac{1}{2}}-1\big)\big(\tilde\psi(\beta-1)-\tilde\psi(\beta+1)\big)\\
		&\quad\quad\qquad+\frac{(1-2\beta)\tilde\psi(\beta-1)}{(\al^2+\beta^2)^{\frac{1}{2}}+(\al^2+(\beta-1)^2)^{\frac{1}{2}}}+\frac{(1+2\beta)\tilde\psi(\beta+1)}{(\al^2+\beta^2)^{\frac{1}{2}}+(\al^2+(\beta+1)^2)^{\frac{1}{2}}}\bigg|^2\\
    \ge&\frac{\pi|\al|^{\frac{1}{2}}}{2}\sum_{\beta\in\mathbb Z}\Big(\frac{1}{2}\Big|\big((\al^2+\beta^2)^{\frac{1}{2}}-1\big)\big(\tilde\psi(\beta-1)-\tilde\psi(\beta+1)\big)\Big|^2-\big(|\tilde\psi(\beta-1)|+|\tilde\psi(\beta+1)|\big)^2\Big)\\
		\ge&\frac{\pi|\al|^{\frac{1}{2}}}{4}\sum_{\beta\in\mathbb Z}\big((\al^2+\beta^2)^{\frac{1}{2}}-1\big)^2|\tilde\psi(\beta-1)-\tilde\psi(\beta+1)|^2-2\pi|\al|^{\frac{1}{2}}\sum_{\beta\in\mathbb Z}|\tilde\psi(\beta)|^2.
	\end{align*}

Now we turn to the estimate of $I_2$ in \eqref{eq-estimate-psi}. It holds that
	\begin{align*}
		I_2=&\frac{1}{4}|\al|^{-\frac{1}{2}}\sum_{\beta\in\mathbb Z}|\beta|^2|f(\beta-1)\tilde\psi(\beta-1)+f(\beta+1)\tilde\psi(\beta+1)|^2\\
		=&\frac{1}{4}|\al|^{-\frac{1}{2}}\sum_{\beta\in\mathbb Z}\Big|\Big(f(\beta-1)\tilde\psi(\beta-1)-f(\beta+1)\tilde\psi(\beta+1)\Big)\\
		&\qquad\qquad\qquad\qquad+\Big((\beta-1)f(\beta-1)\tilde\psi(\beta-1)+(\beta+1)f(\beta+1)\tilde\psi(\beta+1)\Big)\Big|^2\\
		\le&\frac{1}{2}|\al|^{-\frac{1}{2}}\sum_{\beta\in\mathbb Z}f^2(\beta)|\tilde\psi(\beta-1)-\tilde\psi(\beta+1)|^2+2|\al|^{-\frac{1}{2}}\sum_{\beta\in\mathbb Z}|\beta|^2f^2(\beta)|\tilde\psi(\beta)|^2\\
		\le&\frac{1}{4\pi}|\al|^{\frac{1}{2}}\|A\hat \theta\|^2_{L^2}+\frac{1}{2}|\al|^{\frac{1}{2}}\sum_{\beta\in\mathbb Z}|\tilde\psi(\beta)|^2+5\sum_{\beta\in\mathbb Z}\frac{|\al|^{\frac{1}{2}}\beta^2}{(\al^2+\beta^2)^{\frac{1}{2}}}|\tilde\psi(\beta)|^2\\
    \le&\frac{1}{4\pi}|\al|^{\frac{1}{2}}\|A\hat \theta\|^2_{L^2}+\frac{1}{2}|\al|^{\frac{1}{2}}\sum_{\beta\in\mathbb Z,|\beta|\le|\al|}|\tilde\psi(\beta)|^2+6\sum_{\beta\in\mathbb Z}\frac{|\al|^{\frac{1}{2}}\beta^2}{(\al^2+\beta^2)^{\frac{1}{2}}}|\tilde\psi(\beta)|^2.
	\end{align*}
In the second last inequality, we used \eqref{ineq-psi-A} and the fact that $f(\beta)=0$ for $|\beta|\ge2|\al|$.

From the above estimates, we have
	\begin{align*}
		&\sum_{\beta\in\mathbb Z,|\beta|\le|\al|}(|\al|-1)|\tilde\psi(\beta)|^2\\
		\le&\frac{9}{4\pi}|\al|^{\frac{1}{2}}\|A\hat \theta\|^2_{L^2}+\frac{9}{2}|\al|^{\frac{1}{2}}\sum_{\beta\in\mathbb Z,|\beta|\le|\al|}|\tilde\psi(\beta)|^2+16\sum_{\beta\in\mathbb Z}\frac{|\al|^{\frac{1}{2}}\beta^2}{(\al^2+\beta^2)^{\frac{1}{2}}}|\tilde\psi(\beta)|^2.
	\end{align*}
Then, it follows from the fact $|\al|\ge100$ and $|\al|-1\ge9|\al|^{\frac{1}{2}}$ that
	\begin{align*}
		\sum_{\beta\in\mathbb Z,|\beta|\le|\al|}(|\al|-1)|\tilde\psi(\beta)|^2\le \frac{9}{2\pi}|\al|^{\frac{1}{2}}\|A\hat \theta\|^2_{L^2}+32\sum_{\beta\in\mathbb Z}\frac{|\al|^{\frac{1}{2}}\beta^2}{(\al^2+\beta^2)^{\frac{1}{2}}}|\tilde\psi(\beta)|^2.
	\end{align*}
	For $|\al|<100$ and $0<|\beta|<|\al|$, it is obvious that
	\begin{align*}
		(|\al|-1)|\tilde\psi(\beta)|^2\le2000\frac{|\al|^{\frac{1}{2}}\beta^2}{(\al^2+\beta^2)^{\frac{1}{2}}}|\tilde\psi(\beta)|^2.
	\end{align*}
The rest is the case $1<c_0\le|\al|<100$ and $|\beta|=0$. Similar to the proof of \eqref{ineq-psi-A}, we have
\begin{align*}
		\|A\hat \theta\|^2_{L^2}		
		=&\|\sin y\big((-\Delta_\alpha)^{\frac{1}{2}}-1\big)\psi\|^2_{L^2}\\
		\ge&2\pi\big|(|\al|-1)\tilde\psi(0)-\big((\al^2+4)^{\frac{1}{2}}-1\big)\tilde\psi(2)\big|^2\\
		\ge&\pi(|\al|-1)^2|\tilde\psi(0)|^2-2\pi\big((\al^2+4)^{\frac{1}{2}}-1\big)^2|\tilde\psi(2)|^2.
	\end{align*}
It follows that
	\begin{align*}
		(|\al|-1)|\tilde\psi(0)|^2\le \frac{1}{\pi(\al-1)}\|A\hat \theta\|^2_{L^2}+\frac{2000}{\al-1}\frac{4|\al|^{\frac{1}{2}}}{(\al^2+4)^{\frac{1}{2}}}|\tilde\psi(2)|^2.
	\end{align*}
Combining these results, we arrive at the conclusion of this lemma.
\end{proof}
\begin{lemma}\label{lem-com-ba}
Under the same assumptions of Lemma \ref{lem-equivalence}, it holds that
  \begin{align*}
  \Re\langle (BA-AB)\hat \theta, \pa_y\hat \theta\rangle_*\le-\frac{1}{C}\sum_{\beta\in\mathbb Z}\frac{2\pi\beta^2}{(\al^2+\beta^2)^{\frac{1}{2}}}|\tilde\psi(\beta)|^2,
\end{align*}
where $C>0$ is a independent constant.
\end{lemma}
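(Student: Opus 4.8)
The plan is to pass to the Fourier side in $y$, where the commutator $[B,A]:=BA-AB$ turns out to be a multiplier, and then reduce the statement to an elementary inequality for scalar weights. Introduce $M=1-(-\Delta_\al)^{-1/2}$, the Fourier multiplier in $y$ with real symbol $m(\beta)=1-(\al^2+\beta^2)^{-1/2}$, so that $A=\sin y\,M$ and $B=\cos y\,M$. Using $\widetilde{e^{\pm iy}f}(\beta)=\tilde f(\beta\mp1)$ together with $\widetilde{Mf}(\beta)=m(\beta)\tilde f(\beta)$, I would expand $\widetilde{BAf}(\beta)$ and $\widetilde{ABf}(\beta)$ directly. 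The key point is that the contributions at the shifted modes $\tilde f(\beta\pm2)$ carry the same weights $m(\beta\mp1)m(\beta\mp2)$ in both products and hence cancel in the difference, leaving the diagonal identity
\[
\widetilde{[B,A]f}(\beta)=\tfrac{1}{2i}\,m(\beta)\big(m(\beta+1)-m(\beta-1)\big)\tilde f(\beta).
\]

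Next, since $\langle u,w\rangle_*=\langle u,Mw\rangle=2\pi\sum_\beta m(\beta)\tilde u(\beta)\overline{\tilde w(\beta)}$ and $\widetilde{\pa_y\hat\theta}(\beta)=i\beta\tilde\theta(\beta)$, Plancherel gives
\[
\Re\big\langle [B,A]\hat\theta,\pa_y\hat\theta\big\rangle_*=-\pi\sum_{\beta\in\mathbb Z} m(\beta)^2\big(m(\beta+1)-m(\beta-1)\big)\beta\,|\tilde\theta(\beta)|^2,
\]
which is automatically real. I would then insert $|\tilde\theta(\beta)|^2=(\al^2+\beta^2)|\tilde\psi(\beta)|^2$ (from $(-\Delta_\al)^{1/2}\hat\psi=\hat\theta$) and analyze the scalar factor: with $p=(\al^2+(\beta-1)^2)^{1/2}$, $q=(\al^2+(\beta+1)^2)^{1/2}$,
\[
m(\beta+1)-m(\beta-1)=\frac1p-\frac1q=\frac{q^2-p^2}{pq(p+q)}=\frac{4\beta}{pq(p+q)},
\]
so $\big(m(\beta+1)-m(\beta-1)\big)\beta=4\beta^2/(pq(p+q))\ge0$; in particular every summand is $\le0$, which already produces the correct sign. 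For the size, the crude estimate $(\beta\mp1)^2\le4(\al^2+\beta^2)$ (split $\beta=0$ versus $|\beta|\ge1$, using $\al^2>1$) gives $p,q\le\sqrt5\,(\al^2+\beta^2)^{1/2}$, hence $pq(p+q)\le10\sqrt5\,(\al^2+\beta^2)^{3/2}$ and $\big(m(\beta+1)-m(\beta-1)\big)\beta\ge\tfrac{2}{5\sqrt5}\,\beta^2(\al^2+\beta^2)^{-3/2}$. Since only $\beta\ne0$ contributes, and there $\al^2+\beta^2>2$ forces $m(\beta)^2\ge(1-2^{-1/2})^2$, combining these bounds term by term yields
\[
\Re\big\langle [B,A]\hat\theta,\pa_y\hat\theta\big\rangle_*\le-\frac{(1-2^{-1/2})^2}{5\sqrt5}\sum_{\beta\in\mathbb Z}\frac{2\pi\beta^2}{(\al^2+\beta^2)^{1/2}}|\tilde\psi(\beta)|^2,
\]
which is the claim with $C=5\sqrt5\,(1-2^{-1/2})^{-2}$, independent of $\al$, $\nu$ and $c_0$.

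The hard part is the commutator computation in the first step: one must track the Fourier shifts carefully and notice the cancellation of the $\beta\pm2$ modes, since this is precisely what makes $[B,A]$ a multiplier and reduces the rest to scalar algebra. Everything else is routine: the convergence of all the series follows from $|m(\beta+1)-m(\beta-1)|\lesssim|\beta|(\al^2+\beta^2)^{-3/2}$ and $\hat\theta\in L^2$, and the weight bounds are elementary.
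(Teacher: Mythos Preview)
Your proof is correct and follows essentially the same approach as the paper: both show that $[B,A]$ is a Fourier multiplier in $y$ with symbol $\tfrac{1}{2i}\,m(\beta)\big(m(\beta+1)-m(\beta-1)\big)$ and then bound this symbol termwise to obtain the stated inequality. The only cosmetic difference is that the paper first rewrites $[B,A]=\big(\sin y(-\Delta_\al)^{-1/2}\cos y-\cos y(-\Delta_\al)^{-1/2}\sin y\big)M$ and extracts the multiplier via the conjugated operators $(-\Delta_{\al,\pm1})^{1/2}=e^{\mp iy}(-\Delta_\al)^{1/2}e^{\pm iy}$, whereas you expand $BA$ and $AB$ directly in Fourier and observe the cancellation of the $\beta\pm2$ modes; the resulting identity and the elementary weight bounds are identical.
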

\begin{proof}
By the definitions, it is clear that
  \begin{align*}
  &BA-AB\\
  =&\cos y\big(1-(-\Delta_{\al})^{-\frac{1}{2}}\big)\sin y\big(1-(-\Delta_{\al})^{-\frac{1}{2}}\big)-\sin y\big(1-(-\Delta_{\al})^{-\frac{1}{2}}\big)\cos y\big(1-(-\Delta_{\al})^{-\frac{1}{2}}\big)\\
  =&\Big(-\cos y(-\Delta_{\al})^{-\frac{1}{2}}\sin y+\sin y(-\Delta_{\al})^{-\frac{1}{2}}\cos y\Big)\big(1-(-\Delta_{\al})^{-\frac{1}{2}}\big).
\end{align*}
A direct calculation shows that
\begin{align*}
  &\cos y(-\Delta_{\al})^{-\frac{1}{2}}\sin y-\sin y(-\Delta_{\al})^{-\frac{1}{2}}\cos y\\
  =&\Big((e^{iy}+e^{-iy})(-\Delta_\al)^{-\frac{1}{2}}(e^{iy}-e^{-iy})-(e^{iy}-e^{-iy})(-\Delta_\al)^{-\frac{1}{2}}(e^{iy}+e^{-iy})\Big)/(4i)\\
  =&\Big(e^{-iy}(-\Delta_\al)^{-\frac{1}{2}}e^{iy}-e^{iy}(-\Delta_\al)^{-\frac{1}{2}}e^{-iy}\Big)/(2i)\\
  =&\Big(\big(e^{-iy}(-\Delta_\al)^{\frac{1}{2}}e^{iy}\big)^{-1}-\big(e^{iy}(-\Delta_\al)^{\frac{1}{2}}e^{-iy}\big)^{-1}\Big)/(2i)\\
  =&\Big((-\Delta_{\al,1})^{-\frac{1}{2}}-(-\Delta_{\al,-1})^{-\frac{1}{2}}\Big)/(2i)\\
  =&\Big((-\Delta_{\al,-1})^{\frac{1}{2}}-(-\Delta_{\al,1})^{\frac{1}{2}}\Big)(-\Delta_{\al,1})^{-\frac{1}{2}}(-\Delta_{\al,-1})^{-\frac{1}{2}}/(2i),
\end{align*}
where
\begin{align*}
  (-\Delta_{\al,1})^{\frac{1}{2}}=e^{-iy}(-\Delta_\al)^{\frac{1}{2}}e^{iy},\quad (-\Delta_{\al,-1})^{\frac{1}{2}}=e^{iy}(-\Delta_\al)^{\frac{1}{2}}e^{-iy}.
\end{align*}
From Fourier point of view, one can see that
\begin{align*}
  \mathcal F_y\big((-\Delta_{\al,1})^{\frac{1}{2}}\hat f\big)(\beta)&=(\al^2+(\beta+1)^2)^{\frac{1}{2}}\tilde f(\beta).
\end{align*}
It follows that
\begin{align*}
  &\mathcal F_y\Big(\cos y(-\Delta_{\al})^{-\frac{1}{2}}\sin y-\sin y(-\Delta_{\al})^{-\frac{1}{2}}\cos y\Big)\\
  =&\Big((\al^2+(\beta-1)^2)^{\frac{1}{2}}-(\al^2+(\beta+1)^2)^{\frac{1}{2}}\Big)(\al^2+(\beta-1)^2)^{-\frac{1}{2}}(\al^2+(\beta+1)^2)^{-\frac{1}{2}}/(2i)\\
  =&2i\beta\Big((\al^2+(\beta-1)^2)^{\frac{1}{2}}+(\al^2+(\beta+1)^2)^{\frac{1}{2}}\Big)^{-1}(\al^2+(\beta-1)^2)^{-\frac{1}{2}}(\al^2+(\beta+1)^2)^{-\frac{1}{2}}.
\end{align*}
Thus, by using the fact that $|\al|>1$, one can deduce that 
\begin{align*}
  &\Re\langle (BA-AB)\hat \theta, \pa_y\hat \theta\rangle_*\\
  =&\Re\Big\langle \Big(-\cos y(-\Delta_{\al})^{-\frac{1}{2}}\sin y+\sin y(-\Delta_{\al})^{-\frac{1}{2}}\cos y\Big)\big(1-(-\Delta_{\al})^{-\frac{1}{2}}\big)\hat \theta, \pa_y\hat \theta\Big\rangle_*\\
  =&\Re\left\langle -\big((-\Delta_{\al,-1})^{\frac{1}{2}}-(-\Delta_{\al,1})^{\frac{1}{2}}\big)(-\Delta_{\al,1})^{-\frac{1}{2}}(-\Delta_{\al,-1})^{-\frac{1}{2}}\big(1-(-\Delta_{\al})^{-\frac{1}{2}}\big)\frac{\hat \theta}{2i}, \big(1-(-\Delta_{\al})^{-\frac{1}{2}}\big)\pa_y\hat \theta\right\rangle\\
  =&-4\pi\sum_{\beta\in\mathbb Z}\frac{(\al^2+(\beta-1)^2)^{-\frac{1}{2}}(\al^2+(\beta+1)^2)^{-\frac{1}{2}}}{(\al^2+(\beta-1)^2)^{\frac{1}{2}}+(\al^2+(\beta+1)^2)^{\frac{1}{2}}}\big(1-(\al^2+\beta^2)^{-\frac{1}{2}}\big)^2\beta^2|\tilde \theta(\beta)|^2\\
  =&-4\pi\sum_{\beta\in\mathbb Z}\frac{((\al^2+\beta^2)^{\frac{1}{2}}-1)^2}{(\al^2+(\beta-1)^2)^{\frac{1}{2}}+(\al^2+(\beta+1)^2)^{\frac{1}{2}}}\frac{\beta^2|\tilde\psi(\beta)|^2}{(\al^2+(\beta-1)^2)^{\frac{1}{2}}(\al^2+(\beta+1)^2)^{\frac{1}{2}}}\\
  \le&-\frac{1}{C}\sum_{\beta\in\mathbb Z}\frac{2\pi\beta^2}{(\al^2+\beta^2)^{\frac{1}{2}}}|\tilde\psi(\beta)|^2.
\end{align*}
This completes the proof.
\end{proof}

\subsection{Energy estimate} Recall the energy functional 
\begin{align*}
  \Phi(t)=E_0+a_1\nu^{2}t^2E_1+a_2\nu^{2}t^3\mathcal E_1+a_3\nu^{2}t^4\mathcal E_2,
\end{align*}
where the coefficients $a_1,a_2,a_3$ are determined in the proof. It is natural to study the time evolution of the energy functional $\Phi(t)$:
\begin{align*}
\f{d}{dt}\Phi(t)=\f{d}{dt}E_0&+a_1\nu^2t^2\f{d}{dt}E_1+a_2\nu^2t^3\f{d}{dt}\mathcal{E}_1+a_3\nu^2t^4\f{d}{dt}\mathcal{E}_2\\
&+2a_1\nu^2tE_1+3a_2\nu^2t^2\mathcal{E}_1+4a_3\nu^2t^3\mathcal{E}_2. 
\end{align*}

Now, we are in a position to prove Proposition \ref{pro-energy}, and to estimate the time evolution of $E_0$, $E_1$, $\mathcal{E}_1$ and $\mathcal{E}_2$. 
\begin{proof}[Proof of Proposition \ref{pro-energy}]
We only prove for the case $\al>1$, for which $\mathcal E_1=-\Re\langle iA\hat \theta, \pa_y\hat \theta\rangle_*$ and $\gamma>0$. The proof for $\al<-1$ is the same. 

Recalling that $\mathcal L_\nu(\al,t)=\nu(-\Delta_{\al})^{\frac{1}{2}}+i\gamma(t)B$ and $E_0=\|\hat \theta\|^2_*=2\pi\sum\limits_{\beta\in\mathbb Z}\frac{(\al^2+\beta^2)^{\frac{1}{2}}-1}{(\al^2+\beta^2)^{\frac{1}{2}}}|\tilde \theta(\beta)|^2$, we have
\begin{align*}
	\frac{d}{dt}E_0=&2\Re\langle \pa_t\hat \theta, \hat \theta\rangle_*=-2\Re\langle \mathcal L_\nu\hat \theta, \hat \theta\rangle_*\\
	=&-2\nu\Re\langle (-\Delta_\alpha)^{\frac{1}{2}}\hat \theta, \hat \theta\rangle_*-2\gamma\Re\langle iB\hat \theta, \hat \theta\rangle_*\\
	=&-2\nu\langle  (-\Delta_\alpha)^{\frac{1}{4}}\hat \theta,  (-\Delta_\alpha)^{\frac{1}{4}}\hat \theta\rangle_*\\
  =&-4\pi\nu\sum_{\beta\in\mathbb Z}((\al^2+\beta^2)^{\frac{1}{2}}-1)|\tilde \theta(\beta)|^2=-2\nu E_{\frac{1}{2}}.
\end{align*}
Here we use the fact that $B$ is symmetric with respect to $\langle\cdot,\cdot\rangle_*$.

For $E_1=\|\pa_y\hat \theta\|^2_*=2\pi\sum\limits_{\beta\in\mathbb Z}\frac{\beta^2((\al^2+\beta^2)^{\frac{1}{2}}-1)}{(\al^2+\beta^2)^{\frac{1}{2}}}|\tilde\theta(\beta)|^2$, one can deduce that
\begin{align*}
	\frac{d}{dt}E_1=&2\Re\langle \pa_y\pa_t\hat \theta, \pa_y\hat \theta\rangle_*=-2\Re\langle \pa_y\mathcal L_\nu\hat \theta, \hat \theta\rangle_*\\
	=&-2\nu\Re\langle (-\Delta_\alpha)^{\frac{1}{2}}\pa_y\hat \theta, \pa_y\hat \theta\rangle_*-2\gamma\Re\langle i\pa_y(B\hat \theta), \pa_y\hat \theta\rangle_*\\
	=&-2\nu\Re\langle (-\Delta_\alpha)^{\frac{1}{2}}\pa_y\hat \theta, \pa_y\hat \theta\rangle_*-2\gamma\Re\langle iB\pa_y\hat \theta, \pa_y\hat \theta\rangle_*+2\gamma\Re\langle iA\hat \theta, \pa_y\hat \theta\rangle_*\\
	=&-2\nu\Re\langle (-\Delta_\alpha)^{\frac{1}{2}}\pa_y\hat \theta, \pa_y\hat \theta\rangle_*+2\gamma\Re\langle iA\hat \theta, \pa_y\hat \theta\rangle_*\\
  =&-4\pi\nu\sum_{\beta\in\mathbb Z}\beta^2((\al^2+\beta^2)^{\frac{1}{2}}-1)|\tilde \theta(\beta)|^2+2\gamma\Re\langle iA\hat \theta, \pa_y\hat \theta\rangle_*\\
	=&-2\nu E_{\frac{3}{2}}-2\gamma\mathcal E_1.
\end{align*}

Next, we turn to $\mathcal E_1$ and deduce that
\begin{equation}\label{eq-mE1}
  \begin{aligned}    
      \frac{d}{dt}\mathcal E_1=&-\Re\langle iA\pa_t\hat \theta, \pa_y\hat \theta\rangle_*-\Re\langle iA\hat \theta, \pa_y\pa_t\hat \theta\rangle_*\\
  =&\nu\Re\langle iA(-\Delta_\alpha)^{\frac{1}{2}}\hat \theta, \pa_y\hat \theta\rangle_*-\gamma\Re\langle AB\hat \theta, \pa_y\hat \theta\rangle_*\\
  &+\nu\Re\langle iA\hat \theta, (-\Delta_\alpha)^{\frac{1}{2}}\pa_y\hat \theta\rangle_*+\gamma\Re\langle A\hat \theta, B\pa_y\hat \theta\rangle_*-\gamma\Re\langle A\hat \theta, A\hat \theta\rangle_*\\
  =&\nu\Re\langle iA(-\Delta_\alpha)^{\frac{1}{2}}\hat \theta, \pa_y\hat \theta\rangle_*+\nu\Re\langle iA\hat \theta, (-\Delta_\alpha)^{\frac{1}{2}}\pa_y\hat \theta\rangle_*\\
  &+\gamma\Re\langle (BA-AB)\hat \theta, \pa_y\hat \theta\rangle_*-\gamma\Re\langle A\hat \theta, A\hat \theta\rangle_*.
  \end{aligned}
\end{equation}
For the first two terms on the right hand side, we have
\begin{align*}
	&\Re\langle iA(-\Delta_\alpha)^{\frac{1}{2}}\hat \theta, \pa_y\hat \theta\rangle_*+\Re\langle iA\hat \theta, (-\Delta_\alpha)^{\frac{1}{2}}\pa_y\hat \theta\rangle_*\\
	=&2\Re\langle i (-\Delta_\alpha)^{\frac{1}{4}}A\hat \theta,  (-\Delta_\alpha)^{\frac{1}{4}}\pa_y\hat \theta\rangle_*+\Re\left\langle i\Big(A(-\Delta_\alpha)^{\frac{1}{2}}-(-\Delta_\alpha)^{\frac{1}{2}}A\Big)\hat \theta, \pa_y\hat \theta\right\rangle_*\\
  \le&2\|(-\Delta_\alpha)^{\frac{1}{4}}A\hat \theta\|_{L^2}E_{\frac{3}{2}}^{\frac{1}{2}}+\Re\left\langle i\Big(A(-\Delta_\alpha)^{\frac{1}{2}}-(-\Delta_\alpha)^{\frac{1}{2}}A\Big)\hat \theta, \pa_y\hat \theta\right\rangle_*.
\end{align*}
Due to the fine property of the communicator, the last term is a lower order term. Indeed, recalling that $\hat{u}=(1-(-\Delta_{\al})^{-\frac{1}{2}})\hat \theta$, one can deduce that
\begin{align*}
	&\left|\Re\left\langle i\Big(A(-\Delta_\alpha)^{\frac{1}{2}}-(-\Delta_\alpha)^{\frac{1}{2}}A\Big)\hat \theta, \pa_y\hat \theta\right\rangle_*\right|\\
	=&\left|\Re\left\langle i\Big(\sin y(-\Delta_\alpha)^{\frac{1}{2}}-(-\Delta_\alpha)^{\frac{1}{2}}\sin y\Big)\hat u, \pa_y\hat u\right\rangle\right|\\
	=&\frac{1}{2}\Big|\int_\bbT i\Big(\sin y(-\Delta_\alpha)^{\frac{1}{2}}-(-\Delta_\alpha)^{\frac{1}{2}}\sin y\Big)\hat u\cdot\overline{\pa_y\hat u}dy\\
	&\qquad\qquad\qquad\qquad\qquad+\int_\bbT \pa_y\hat u\cdot\overline{i\Big(\sin y(-\Delta_\alpha)^{\frac{1}{2}}-(-\Delta_\alpha)^{\frac{1}{2}}\sin y\Big)\hat u}dy\Big|\\
	=&\frac{\pi}{2}\Big|\sum_{\beta\in\mathbb Z}i\beta\Big((\al^2+(\beta-1)^2)^{\frac{1}{2}}-(\al^2+\beta^2)^{\frac{1}{2}}\Big)\big(\tilde u(\beta)\overline{\tilde u(\beta-1)}-\tilde u(\beta-1)\overline{\tilde u(\beta)}\big)\\
	&\qquad\qquad\qquad-\sum_{\beta\in\mathbb Z}i\beta\Big((\al^2+(\beta+1)^2)^{\frac{1}{2}}-(\al^2+\beta^2)^{\frac{1}{2}}\Big)\big(\tilde u(\beta)\overline{\tilde u(\beta+1)}-\tilde u(\beta+1)\overline{\tilde u(\beta)}\big)\Big|\\
	=&\frac{\pi}{2}\Big|\sum_{\beta\in\mathbb Z}i\beta \frac{1-2\beta}{(\al^2+(\beta-1)^2)^{\frac{1}{2}}+(\al^2+\beta^2)^{\frac{1}{2}}}\big(\tilde u(\beta)\overline{\tilde u(\beta-1)}-\tilde u(\beta-1)\overline{\tilde u(\beta)}\big)\\
	&\qquad\qquad\qquad-\sum_{\beta\in\mathbb Z}i\beta\frac{1+2\beta}{(\al^2+(\beta+1)^2)^{\frac{1}{2}}+(\al^2+\beta^2)^{\frac{1}{2}}}\big(\tilde u(\beta)\overline{\tilde u(\beta+1)}-\tilde u(\beta+1)\overline{\tilde u(\beta)}\big)\Big|\\
	\le&c_2E_{\frac{1}{2}},
\end{align*}
where $c_2$ is a constant depends only on $c_0$.

By using Lemma \ref{lem-com-ba} to estimate the third term on the right hand side of \eqref{eq-mE1}, we conclude that
\begin{align*}
	\frac{d}{dt}\mathcal E_1\le&2\nu\|(\al^2-\pa_y^2)^{1/4}A\hat \theta\|_{L^2}E_{\frac{3}{2}}^{\frac{1}{2}}+c_2\nu E_{\frac{1}{2}}-\gamma\|A\hat \theta\|^2_{L^2}-c_3\gamma\sum_{\beta\in\mathbb Z}\frac{2\pi\beta^2}{(\al^2+\beta^2)^{\frac{1}{2}}}|\tilde\psi|^2.
\end{align*}
Here we emphasize that $0<c_3\le \frac{1}{2}$ is an independent constant.

For $\mathcal E_2=\|\hat \theta\|_*^2-\|B\hat \theta\|_*^2$, by using \eqref{eq-uw} and the fact that $B$ is symmetric, we have
\begin{align*}
	\frac{d}{dt}\mathcal E_2=&2\Re\langle \pa_t\hat \theta, \hat \theta\rangle_*-2\Re\langle B\pa_t\hat \theta, B\hat \theta\rangle_*\\
	=&-2\nu\Re\langle (-\Delta_\alpha)^{\frac{1}{2}}\hat \theta, \hat \theta\rangle_*+2\nu\Re\langle B(-\Delta_\alpha)^{\frac{1}{2}}\hat \theta, B\hat \theta\rangle_*-2\gamma\Re\langle iB\hat \theta, \hat \theta\rangle_*+2\gamma\Re\langle iBB\hat \theta, B\hat \theta\rangle_*\\
	=&-2\nu\Re\langle (-\Delta_\alpha)^{\frac{1}{2}}\hat \theta, \hat \theta\rangle_*+2\nu\Re\langle B(-\Delta_\alpha)^{\frac{1}{2}}\hat \theta, B\hat \theta\rangle_*\\
	=&-2\nu\Big(\Re\langle (-\Delta_{\al})^{-\frac{1}{2}}B(-\Delta_\alpha)^{\frac{1}{2}}\hat \theta, B\hat \theta\rangle+\Re\langle A(-\Delta_\alpha)^{\frac{1}{2}}\hat \theta, A\hat \theta\rangle+\Re\langle \hat \theta, \hat \theta-(-\Delta_{\al})^{-\frac{1}{2}}\hat \theta\rangle\Big).
\end{align*}
A direct calculation shows that
\begin{align*}
	&\Re\langle A(-\Delta_\alpha)^{\frac{1}{4}}\hat \theta, A\hat \theta\rangle\\
	=&\langle (-\Delta_\alpha)^{\frac{1}{4}}A\hat \theta, (-\Delta_\alpha)^{\frac{1}{4}}A\hat \theta\rangle+\Re\langle \big(A(-\Delta_\alpha)^{\frac{1}{2}}-(-\Delta_\alpha)^{\frac{1}{2}}A\big)\hat \theta, A\hat \theta\rangle\\
  \ge&\langle (-\Delta_\alpha)^{\frac{1}{4}}A\hat \theta, (-\Delta_\alpha)^{\frac{1}{4}}A\hat \theta\rangle-\big\|\big(A(-\Delta_\alpha)^{\frac{1}{2}}-(-\Delta_\alpha)^{\frac{1}{2}}A\big)\hat \theta\big\|_{L^2}\|A\hat \theta\|_{L^2}.
\end{align*}
Recalling that $\hat u=(1-(-\Delta_{\al})^{-\frac{1}{2}})\hat \theta$, one can see that
\begin{align*}
	&\left\|\Big(A(-\Delta_\alpha)^{\frac{1}{2}}-(-\Delta_\alpha)^{\frac{1}{2}}A\Big)\hat \theta\right\|^2_{L^2}\\
	=&\left\|\Big(\sin y(-\Delta_\alpha)^{\frac{1}{2}}-(-\Delta_\alpha)^{\frac{1}{2}}\sin y\Big)\hat u\right\|^2_{L^2}\\
	=&\frac{\pi}{2}\sum_{\beta\in\mathbb Z}\left|\frac{(1-2\beta)\tilde u(\beta-1)}{(\al^2+(\beta-1)^2)^{\frac{1}{2}}+(\al^2+\beta^2)^{\frac{1}{2}}}-\frac{(1+2\beta)\tilde u(\beta+1)}{(\al^2+(\beta+1)^2)^{\frac{1}{2}}+(\al^2+\beta^2)^{\frac{1}{2}}}\right|^2\\
	\le&2\pi\sum_{\beta\in\mathbb Z}|\tilde u(\beta)|^2=2\pi\sum_{\beta\in\mathbb Z}\frac{\big((\al^2+\beta^2)^{\frac{1}{2}}-1\big)^2}{\al^2+\beta^2}|\tilde \theta(\beta)|^2\\
	\le& c_4^2E_0,
\end{align*}
where $c_4$ is a constant depending only on $c_0$.

Similarly, we deduce that
\begin{align*}
	&\Re\langle (-\Delta_{\al})^{-\frac{1}{2}}B(-\Delta_\alpha)^{\frac{1}{2}}\hat \theta, B\hat \theta\rangle\\
	=&\Re\langle B(-\Delta_\alpha)^{\frac{1}{2}}\hat \theta, (-\Delta_{\al})^{-\frac{1}{2}}B\hat \theta\rangle\\	
	=&\frac{1}{2} \langle \cos y(-\Delta_\alpha)^{\frac{1}{2}}\hat u, (-\Delta_{\al})^{-\frac{1}{2}}\cos y \hat u\rangle\\
	&+\frac{1}{2} \langle (-\Delta_{\al})^{-\frac{1}{2}}\cos y \hat u,\cos y(-\Delta_\alpha)^{\frac{1}{2}}\hat u\rangle\\
	=&\frac{\pi}{2}\sum_{\beta\in\mathbb Z}\Big(\frac{(\al^2+(\beta-1)^2)^{\frac{1}{2}}}{(\al^2+\beta^2)^{\frac{1}{2}}}|\tilde u(\beta-1)|^2+\frac{(\al^2+(\beta+1)^2)^{\frac{1}{2}}}{(\al^2+\beta^2)^{\frac{1}{2}}}|\tilde u(\beta+1)|^2\Big)\\
	&+\frac{\pi}{2}\sum_{\beta\in\mathbb Z}(\frac{(\al^2+(\beta-1)^2)^{\frac{1}{2}}+(\al^2+(\beta+1)^2)^{\frac{1}{2}}}{(\al^2+\beta^2)^{\frac{1}{2}}})\Re(\tilde u(\beta-1)\overline{\tilde u(\beta+1)})\\
	=&\frac{\pi}{4}\sum_{\beta\in\mathbb Z}(\frac{(\al^2+(\beta-1)^2)^{\frac{1}{2}}+(\al^2+(\beta+1)^2)^{\frac{1}{2}}}{(\al^2+\beta^2)^{\frac{1}{2}}})|\tilde u(\beta-1)+\tilde u(\beta+1)|^2\\
	&-\pi \sum_{\beta\in\mathbb Z}\frac{\beta(|\tilde u(\beta-1)|^2-|\tilde u(\beta+1)|^2)}{(\al^2+\beta^2)^{\frac{1}{2}}((\al^2+(\beta-1)^2)^{\frac{1}{2}}+(\al^2+(\beta+1)^2)^{\frac{1}{2}})}\\
	=&\frac{\pi}{4}\sum_{\beta\in\mathbb Z}(\frac{(\al^2+(\beta-1)^2)^{\frac{1}{2}}+(\al^2+(\beta+1)^2)^{\frac{1}{2}}}{(\al^2+\beta^2)^{\frac{1}{2}}})|\tilde u(\beta-1)+\tilde u(\beta+1)|^2\\
	&-\pi \sum_{\beta\in\mathbb Z}\frac{\beta\Re\Big(\big(\tilde u(\beta-1)+\tilde u(\beta+1)\big)\cdot\overline{\big(\tilde u(\beta-1)-\tilde u(\beta+1)\big)}\Big)}{(\al^2+\beta^2)^{\frac{1}{2}}((\al^2+(\beta-1)^2)^{\frac{1}{2}}+(\al^2+(\beta+1)^2)^{\frac{1}{2}})}\\
  \ge&-\frac{\pi}{4}\sum_{\beta\in\mathbb Z}(\al^2+\beta^2)^{\frac{1}{2}}|\tilde u(\beta-1)-\tilde u(\beta+1)|^2\\
  =&-\frac{1}{2}\|(-\Delta_\al)^{\frac{1}{4}}A\hat \theta\|_{L^2}^2,
\end{align*}
which means that
\begin{align*}
	\Re\langle (-\Delta_{\al})^{-\frac{1}{2}}B(-\Delta_\alpha)^{\frac{1}{2}}\hat \theta, B\hat \theta\rangle+\frac{1}{2}\|(-\Delta_\al)^{\frac{1}{4}}A\hat \theta\|_{L^2}^2\ge0.
\end{align*}
As a conclusion, we arrive at
\begin{align*}
  \frac{d}{dt}\mathcal E_2\le&-2\nu E_0-\nu\|(-\Delta_\al)^{\frac{1}{4}}A\hat \theta\|^2_{L^2}+2c_4\nu\|A\hat \theta\|_{L^2}E_0^{\frac{1}{2}}.
\end{align*}
This complete the proof.
\end{proof}
\section{Enhanced dissipation}
In this section, we determine the coefficients $a_1,a_2,a_3$ in 
\begin{align*}
  \Phi(t)=E_0(t)+a_1\nu^{2}t^2E_1(t)+a_2\nu^{2}t^3\mathcal E_1(t)+a_3\nu^{2}t^4\mathcal E_2(t),
\end{align*}
and study the time evolution of $\Phi(t)$. Precisely, we prove the following lemma. 
\begin{lemma}\label{lem-decay}
  If we take
  \begin{align*}
  a_1=\frac{|\al|^{-1}}{4c_2(6c_4^2+\frac{12c_1}{c_3})^2},\quad a_2=\frac{|\al|^{-\frac{1}{2}}}{8c_2(6c_4^2+\frac{12c_1}{c_3})^3},\quad a_3=\frac{1}{8c_2(6c_4^2+\frac{12c_1}{c_3})^4},
\end{align*}
then for $\nu\le1$ and $0\le t\le\nu^{-\frac{3}{5}}$, it holds that
\begin{align*}
  \Phi(t)\ge E_0(t),\quad  \frac{d}{dt}\Phi(t)\le -a_3\nu^{3}t^4 E_0(t).
\end{align*}
\end{lemma}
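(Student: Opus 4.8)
The plan is to insert the differential inequalities from Proposition~\ref{pro-energy} into the expression for $\frac{d}{dt}\Phi(t)$ and to bound every term either by a good negative contribution or by absorbing it into one of the four negative ``diagonal'' terms that survive, namely $-2\nu E_{\frac12}$ from $\frac{d}{dt}E_0$, $-2a_1\nu^2 t^2\,\nu E_{\frac32}$ from $\frac{d}{dt}E_1$, $-a_2\nu^2 t^3\,|\gamma|\,(\|A\hat\theta\|_{L^2}^2 + c_3\sum_\beta \frac{2\pi\beta^2}{(\al^2+\beta^2)^{1/2}}|\tilde\psi(\beta)|^2)$ from $\frac{d}{dt}\mathcal E_1$, and $-2a_3\nu^2 t^4\,\nu E_0$ from $\frac{d}{dt}\mathcal E_2$. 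The role of the time-weight prefactors $\nu^2 t^2,\nu^2 t^3,\nu^2 t^4$ is precisely that, when one differentiates them, one produces the ``growth'' terms $2a_1\nu^2 t E_1$, $3a_2\nu^2 t^2\mathcal E_1$, $4a_3\nu^2 t^3\mathcal E_2$, and these must be controlled by the diagonal dissipation coming from the \emph{next} level down in the hierarchy: $2a_1\nu^2 t E_1$ is killed using $-a_2\nu^2 t^3|\gamma|\|A\hat\theta\|_{L^2}^2$ after noting (Lemma~\ref{lem-equivalence}, Lemma~\ref{lem-est-me2}) that $E_1$ and $\mathcal E_2$ are comparable to $\|A\hat\theta\|_{L^2}^2$ plus the $\sum_\beta \frac{\beta^2}{(\al^2+\beta^2)^{1/2}}|\tilde\psi|^2$ term; $3a_2\nu^2 t^2\mathcal E_1$ is bounded by Cauchy--Schwarz between $E_1^{1/2}$ and $E_0^{1/2}$ (the definition of $\mathcal E_1$ together with $\|A\hat\theta\|_{L^2}\lesssim\|\hat\theta\|_{L^2}$) and absorbed partly into $E_1$-dissipation, partly into $\mathcal E_2$-dissipation; and $4a_3\nu^2 t^3\mathcal E_2$ is dominated by $-2a_3\nu^2 t^4\nu E_0$ once $\mathcal E_2\lesssim E_0$ in the regime $t\le \nu^{-3/5}$, i.e.\ $\nu^2 t^3\lesssim \nu^3 t^4$ exactly when $t\lesssim\nu^{-1}$, which is implied by $t\le\nu^{-3/5}$ for $\nu\le 1$. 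Wait—more carefully, the balance $k=\tfrac25$ from the Discussion section is what forces the threshold: with the stated $a_i$ one wants the leftover to be $\le -a_3\nu^3 t^4 E_0$, and the constraint $\nu^2 t^3 \le \nu^{3/5}\cdot\nu^2 t^3$-type bookkeeping closes precisely on $t\le\nu^{-3/5}$.

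Concretely I would proceed in the following steps. \emph{Step 1:} record $\frac{d}{dt}\Phi = -2\nu E_{\frac12} + a_1\nu^2 t^2(-2\nu E_{\frac32} - 2|\gamma|\mathcal E_1) + a_2\nu^2 t^3 \frac{d}{dt}\mathcal E_1 + a_3\nu^2 t^4\frac{d}{dt}\mathcal E_2 + 2a_1\nu^2 t E_1 + 3a_2\nu^2 t^2\mathcal E_1 + 4a_3\nu^2 t^3\mathcal E_2$, then substitute the inequalities for $\frac{d}{dt}\mathcal E_1$ and $\frac{d}{dt}\mathcal E_2$. \emph{Step 2:} deal with the cross term $-2a_1\nu^2 t^2|\gamma|\mathcal E_1$ coming from $\frac{d}{dt}E_1$ together with $+3a_2\nu^2 t^2\mathcal E_1$: since $\mathcal E_1 = -\frac{\al}{|\al|}\Re\langle iA\hat\theta,\pa_y\hat\theta\rangle_*$ one has $|\mathcal E_1|\le \|A\hat\theta\|_*\,\|\pa_y\hat\theta\|_* \lesssim \|A\hat\theta\|_{L^2} E_1^{1/2}$, hence $|\mathcal E_1|\le \epsilon E_1 + C\epsilon^{-1}\|A\hat\theta\|_{L^2}^2$; choosing $\epsilon$ small relative to $a_1/a_2$ lets the $E_1$ part be swallowed by $-2a_1\nu^2 t^2\nu E_{\frac32}$ — no wait, $E_1$ is not $E_{\frac32}$; instead absorb the $\epsilon E_1$ part into the term $2a_1\nu^2 tE_1$ treatment below, and the $\|A\hat\theta\|_{L^2}^2$ part into $-a_2\nu^2 t^3|\gamma|\|A\hat\theta\|_{L^2}^2$, which works because $\nu^2 t^2 \le |\gamma|\nu^2 t^3 \le \al\nu^2 t^3$ up to the $e^{-\nu t}$ in $\gamma$, harmless for $t\le\nu^{-1}$. \emph{Step 3:} handle $2a_1\nu^2 tE_1 + 4a_3\nu^2 t^3\mathcal E_2$ using Lemma~\ref{lem-equivalence} (so $E_1,\mathcal E_2 \lesssim \|A\hat\theta\|_{L^2}^2 + \sum_\beta \frac{\beta^2}{(\al^2+\beta^2)^{1/2}}|\tilde\psi|^2$, the latter being exactly what the $c_3$-term in $\frac{d}{dt}\mathcal E_1$ controls): bound these against $-a_2 c_3\nu^2 t^3|\gamma|\sum_\beta\frac{2\pi\beta^2}{(\al^2+\beta^2)^{1/2}}|\tilde\psi|^2 - a_2\nu^2 t^3|\gamma|\|A\hat\theta\|_{L^2}^2$; this forces $\nu^2 t \lesssim a_2 |\gamma|\nu^2 t^3 / a_1$ i.e.\ $t^2\gtrsim a_1/(a_2|\gamma|)$, which is \emph{not} available for small $t$ — so instead for small $t$ one simply keeps $\frac{d}{dt}\Phi\le 0$ directly since all diagonal terms are negative and the growth terms are bounded by them when $t\le$ some $O(1)$ time, and for larger $t$ the above absorptions kick in; the clean way is to note $2a_1\nu^2 tE_1 \le 2a_1\nu^2 t^2\cdot t^{-1}E_1$ and pair with $2a_1\nu^3 t^2 E_{\frac32}\ge 2a_1\nu^3 t^2 E_1$ (since $E_{\frac32}\ge E_1$ when $\beta^2\ge 1$, plus a $\beta=0$ correction), giving absorption once $t^{-1}\le \nu$, i.e.\ $t\ge\nu^{-1}$—again wrong direction. \emph{Step 4 (the real mechanism):} the resolution, as the heuristic in Section~2 indicates, is that $2a_1\nu^2 tE_1$ is \emph{not} absorbed by $E_1$-dissipation at all but by the $\mathcal E_1$-equation's gain: the chain is $E_1 \rightsquigarrow \mathcal E_1 \rightsquigarrow \mathcal E_2 \rightsquigarrow E_0$, and the time weights $t^2,t^3,t^4$ are arranged so that each $\frac{d}{dt}(t^j\cdot)$ growth is one power of $t$ cheaper than the dissipative gain it meets; tracking constants, $2a_1\nu^2 tE_1 \lesssim a_1\nu^2 t \cdot (\|A\hat\theta\|^2 + \Sigma) $ is absorbed into $-c_3 a_2\nu^2 t^3|\gamma|(\|A\hat\theta\|^2+\Sigma)$ precisely when $a_1 \lesssim c_3 a_2 |\gamma| t^2$, and since $|\gamma|\ge |\al|e^{-\nu t}\ge \tfrac12$ for $t\le\nu^{-1}$, this needs $t^2\gtrsim a_1/a_2 = |\al|^{-1/2}(6c_4^2+\tfrac{12c_1}{c_3})\cdot\tfrac12$; for $t$ below this $O(1)$ threshold one uses the trivial bound. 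I will split $[0,\nu^{-3/5}]$ into $[0,T_0]$ with $T_0$ the $O(1)$ threshold, where positivity of all dissipative terms and smallness of $t$ give $\frac{d}{dt}\Phi\le 0 \le -a_3\nu^3 t^4 E_0$ is \emph{false}—rather $\frac{d}{dt}\Phi\le C\nu^2 E_0$ and one just notes $\Phi$ barely changes, absorbing into constants—and $[T_0,\nu^{-3/5}]$, where the full absorption produces $\frac{d}{dt}\Phi\le -a_3\nu^3 t^4 E_0$.

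\emph{Step 5:} the lower bound $\Phi(t)\ge E_0(t)$ follows from $E_1,\mathcal E_2\ge 0$ (manifest: $E_1=\|\pa_y\hat\theta\|_*^2$, and $\mathcal E_2\ge \|A\hat\theta\|_{L^2}^2 + 2\pi\sum_\beta((\al^2+\beta^2)^{1/2}-1)|\tilde\psi|^2\ge 0$ by Lemma~\ref{lem-equivalence}) together with $|a_2\nu^2 t^3\mathcal E_1| \le \tfrac12 a_1\nu^2 t^2 E_1 + \tfrac12 a_3\nu^2 t^4\mathcal E_2$ after Cauchy--Schwarz $|\mathcal E_1|\lesssim \|A\hat\theta\|_{L^2}E_1^{1/2}$ and Lemma~\ref{lem-equivalence} bounding $\|A\hat\theta\|_{L^2}^2 \le \mathcal E_2$, provided $a_2^2 t^2 \le a_1 a_3$ up to constants, which is arranged by the explicit choice $a_2^2 = \tfrac{|\al|^{-1}}{64 c_2^2(6c_4^2+\tfrac{12c_1}{c_3})^6}$ versus $a_1 a_3 = \tfrac{|\al|^{-1}}{32 c_2^2 (6c_4^2+\tfrac{12c_1}{c_3})^6}$, i.e.\ $a_2^2 = \tfrac12 a_1 a_3$, so the Cauchy--Schwarz cross term is controlled uniformly in $t$ with room to spare, hence $\Phi(t)\ge E_0 + \tfrac12 a_1\nu^2 t^2 E_1 + \tfrac12 a_3\nu^2 t^4\mathcal E_2 \ge E_0$.

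\textbf{Main obstacle.} The delicate point is Step~4: matching the time powers so that each differentiated weight is absorbed by the dissipation one level down, and checking that the threshold where this absorption becomes valid is $O(1)$ in $\nu$ (not $\nu$-dependent in a bad way), so that on the remaining interval — which must extend all the way to $\nu^{-3/5}$ — the closed inequality $\frac{d}{dt}\Phi\le -a_3\nu^3 t^4 E_0$ holds with the \emph{explicitly} chosen constants $a_1,a_2,a_3$. In particular one must verify that the factor $e^{-\nu t}$ hidden in $\gamma(t)=\al e^{-\nu t}$ stays bounded below (which is why $t\le\nu^{-3/5}\ll\nu^{-1}$ is imposed) and that the various constants $c_1,\dots,c_4$ from Lemmas~\ref{lem-equivalence}--\ref{lem-com-ba} combine exactly into the denominators $6c_4^2+\tfrac{12c_1}{c_3}$ appearing in $a_1,a_2,a_3$; this is the bookkeeping that the heuristic $k=\tfrac25$ computation in Section~2 is designed to make transparent.
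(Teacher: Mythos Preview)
Your Step~5 is correct and matches the paper exactly: $|\mathcal E_1|\le E_1^{1/2}\|A\hat\theta\|_{L^2}\le E_1^{1/2}\mathcal E_2^{1/2}$ together with $a_2^2=\tfrac12 a_1 a_3$ gives $\Phi\ge E_0$.

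The genuine gap is in Steps~2--4, where you repeatedly try to control $E_1=\|\pa_y\hat\theta\|_*^2$ by lower-regularity quantities such as $\|A\hat\theta\|_{L^2}^2$ or $\mathcal E_2$. This cannot work: $E_1$ is a full derivative norm and is \emph{not} bounded by $\|A\hat\theta\|_{L^2}^2+\sum_\beta\frac{\beta^2}{(\al^2+\beta^2)^{1/2}}|\tilde\psi|^2$ (the latter are morally $H^{-1/2}$-type quantities). The correct mechanism for the growth term $2a_1\nu^2 t\,E_1$ is the elementary Fourier interpolation
\[
E_1\le E_{1/2}^{1/2}\,E_{3/2}^{1/2},
\]
which turns the triple $\bigl(a_1\nu E_{1/2},\,-2a_1\nu^2 t E_1,\,a_1\nu^3 t^2 E_{3/2}\bigr)$ into a nonnegative perfect square, valid for \emph{all} $t\ge 0$ with no threshold. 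Likewise, the combined cross term $(3a_2-2a_1|\gamma|)\nu^2 t^2\mathcal E_1$ is not handled via $|\mathcal E_1|\le E_1^{1/2}\|A\hat\theta\|_{L^2}$ (which would reintroduce $E_1$) but via
\[
\mathcal E_1\le E_{1/2}^{1/2}\,\|(-\Delta_\al)^{1/4}A\hat\theta\|_{L^2},
\]
paired with a small piece of $-\nu E_{1/2}$ and with $-\tfrac12 a_3\nu^3 t^4\|(-\Delta_\al)^{1/4}A\hat\theta\|_{L^2}^2$ coming from $\frac{d}{dt}\mathcal E_2$. The term $4a_3\nu^2 t^3\mathcal E_2$ is absorbed, as you suspected in Step~3, by the negative contribution from $\frac{d}{dt}\mathcal E_1$ after invoking Lemma~\ref{lem-est-me2}; and $2a_3c_4\nu^3 t^4\|A\hat\theta\|_{L^2}E_0^{1/2}$ is paired with $a_3\nu^3 t^4 E_0$ and $\tfrac12 a_2|\gamma|\nu^2 t^3\|A\hat\theta\|_{L^2}^2$.

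With these pairings the argument is uniform on $[0,\nu^{-3/5}]$ and no time-splitting $[0,T_0]\cup[T_0,\nu^{-3/5}]$ is needed; the only place the restriction $t\le\nu^{-3/5}$ enters is to ensure $\nu^3 t^3\le\nu$ (so that $a_2c_2\nu^3 t^3 E_{1/2}$ is dominated by a piece of $\nu E_{1/2}$) and $|\gamma|\ge|\al|/3$. The five constraints one extracts from the perfect-square pairings are exactly $a_2^2\le\tfrac12 a_1 a_3$, $a_3\le\tfrac{a_2|\al|}{6c_4^2}$, $a_3\le\tfrac{c_3 a_2|\al|^{1/2}}{12c_1}$, $a_2\le\tfrac{1}{2c_2}$, and $a_1^2|\al|^2\le\tfrac{a_3}{2}$, which the stated $a_1,a_2,a_3$ satisfy.
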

\begin{proof}
  For the same reason to Proposition \ref{pro-energy}, we only prove for the case $\al>1$, for which $\mathcal E_1=-\Re\langle iA\hat \theta, \pa_y\hat \theta\rangle_*$ and $\gamma>0$. The proof for $\al<-1$ is the same. 

With the help of Proposition \ref{pro-energy}, we deduce that
	\begin{align*}
		\frac{d}{dt}\Phi(t)\le& -2\nu E_{\frac{1}{2}}\\
		&+2a_1\nu^2tE_1-2a_1\nu^3t^2E_{\frac{3}{2}}-2a_1\nu^2t^2\gamma\mathcal E_1\\
		&+3a_2\nu^2t^2\mathcal E_1+2a_2\nu^3t^3\|(-\Delta)^{\frac{1}{4}}A\hat \theta\|_{L^2}E_{\frac{3}{2}}^{\frac{1}{2}}+a_2c_2\nu^3t^3E_{\frac{1}{2}}\\
		&\qquad\qquad\qquad\qquad\qquad-a_2\gamma\nu^2t^3\|A\hat \theta\|^2_{L^2}-a_2c_3\gamma\nu^2t^3\sum_{\beta\in\mathbb Z}\frac{2\pi\beta^2}{(\al^2+\beta^2)^{\frac{1}{2}}}|\tilde\psi(\beta)|^2\\
		&+4a_3\nu^2t^3\mathcal E_2-2a_3\nu^3t^4E_0-a_3\nu^3t^4\|(-\Delta)^{\frac{1}{4}}A\hat \theta\|_{L^2}^2+2a_3c_4\nu^3t^4\|A\hat \theta\|_{L^2}E_0^{\frac{1}{2}}\\
		=&-(a_2c_2\nu E_{\frac{1}{2}}-a_2c_2\nu^3t^3E_{\frac{1}{2}})\\
		&-(a_1\nu  E_{\frac{1}{2}}-2a_1\nu^2tE_1+a_1\nu^3t^2E_{\frac{3}{2}})\\
		&-\Big(\frac{(3a_2-2a_1\gamma)^2}{2a_3}\nu  E_{\frac{1}{2}}-(3a_2-2a_1\gamma)\nu^2t^2\mathcal E_1+\frac{1}{2}a_3\nu^3t^4\|(-\Delta)^{\frac{1}{4}}A\hat \theta\|_{L^2}^2\Big)\\
		&-\Big(a_1\nu^3t^2E_{\frac{3}{2}}-2a_2\nu^3t^3\|(-\Delta)^{\frac{1}{4}}A\hat \theta\|_{L^2}E_{\frac{3}{2}}^{\frac{1}{2}}+\frac{1}{2}a_3\nu^3t^4\|(-\Delta)^{\frac{1}{4}}A\hat \theta\|_{L^2}^2\Big)\\
		&-(a_3\nu^3t^4E_0-2a_3c_4\nu^3t^4\|A\hat \theta\|_{L^2}E_0^{\frac{1}{2}}+\frac{1}{2}a_2\gamma\nu^2t^3\|A\hat \theta\|^2_{L^2})\\
		&-\big(\frac{1}{2}a_2\gamma\nu^2t^3\|A\hat \theta\|^2_{L^2}-4a_3\nu^2t^3\mathcal E_2+a_2c_3\gamma\nu^2t^3\sum_{\beta\in\mathbb Z}\frac{2\pi\beta^2}{(\al^2+\beta^2)^{\frac{1}{2}}}|\tilde\psi(\beta)|^2\big)\\
		&-(2-a_2c_2-a_1-\frac{(3a_2-2a_1\gamma)^2}{2a_3})\nu  E_{\frac{1}{2}}\\
		&-a_3\nu^3t^4E_0\\
    \eqdefa&I_1+I_2+I_3+I_4+I_5+I_6+I_7+I_8.
	\end{align*}
Our analysis is based on the above inequality. 

For $\nu\le1$ and $t\in[0,\nu^{-3/5}]$, it holds that $\frac{1}{3}|\al|\le\gamma\le|\al|$ and $\nu^3t^3\le\nu$. By the definition, it is easy to see that
\begin{align*}
  E_1\le E_{\frac{1}{2}}^{\frac{1}{2}}E_{\frac{3}{2}}^{\frac{1}{2}},\quad \mathcal E_1\le E_{\frac{1}{2}}^{\frac{1}{2}}\|(-\Delta)^{\frac{1}{4}}A\hat \theta\|_{L^2}.
\end{align*}
From Lemma \ref{lem-est-me2} and the fact that $c_3\le \frac{1}{2}$, we have
\begin{align*}
  \frac{1}{2}a_2\gamma\nu^2t^3\|A\hat \theta\|^2_{L^2}+a_2c_3\gamma\nu^2t^3\sum_{\beta\in\mathbb Z}\frac{2\pi\beta^2}{(\al^2+\beta^2)^{\frac{1}{2}}}|\tilde\psi(\beta)|^2\ge \frac{c_3}{c_1}|\al|^{-\frac{1}{2}}a_2\gamma\nu^2t^3\mathcal E_2.
\end{align*}
Therefore, to ensure that all $I_i$ for $i=1,\dots,7$ are negative, we obtain the following constraint conditions:
\begin{align*}
	a_2^2\le \frac{1}{2}a_1a_3,\quad a_3 \le \frac{a_2|\al|}{6c_4^2},\quad a_3\le \frac{c_3a_2|\al|^{\frac{1}{2}}}{12c_1},\quad a_2\le \frac{1}{2c_2},\quad a_1^2|\al|^2\le \frac{a_3}{2}.
\end{align*}
So we choose
\begin{align*}
	a_1=\frac{|\al|^{-1}}{4c_2(6c_4^2+\frac{12c_1}{c_3})^2},\quad a_2=\frac{|\al|^{-\frac{1}{2}}}{8c_2(6c_4^2+\frac{12c_1}{c_3})^3},\quad a_3=\frac{1}{8c_2(6c_4^2+\frac{12c_1}{c_3})^4}
\end{align*}
to fulfill the above conditions.

As a result, we have
\begin{align*}
	\frac{d}{dt}\Phi (t)\le-a_3\nu^3t^4E_0(t).
\end{align*}

From Lemma \ref{lem-equivalence}, one can see that
\begin{align*}
  \mathcal E_1\le E_{1}^{\frac{1}{2}}\|A\hat\theta\|_{L^2}\le  E_{1}^{\frac{1}{2}}\mathcal E_{2}^{\frac{1}{2}}.
\end{align*}
As $a_2^2\le \frac{1}{2}a_1a_3$, it follows that $\Phi(t)\ge E_0(t)$. It completes the proof.
\end{proof}

Now we give the proof of Theorem \ref{Thm-cri}.
\begin{proof}[Proof of Theorem \ref{Thm-cri}]
  From Lemma \ref{lem-decay}, one can see that
  \begin{align*}
  \Phi(t)\le\Phi(0)=E_0(0),\quad\forall t\in[0,\nu^{-3/5}].
\end{align*}
Next, we introduce a new energy functional which is defined on $t\in[\nu^{-3/5},\nu^{-1}]$
\begin{align*}
  \tilde\Phi(t)=E_0(t)+a_1\nu^{4/5}E_1(t)+a_2\nu^{1/5}\mathcal E_1(t)+a_3\nu^{-2/5}\mathcal E_2(t).
\end{align*}
It is easy to check that 
\begin{align*}
  \frac{d}{dt}\tilde\Phi(t)=&-2 \nu E_{\frac{1}{2}}-2a_1\nu^{9/5}E_{\frac{3}{2}}-2a_1\nu^{4/5}|\gamma|\mathcal E_1+2a_2\nu^{6/5}\|(-\Delta_\alpha)^{\frac{1}{4}}A\hat \theta\|_{L^2}E_{\frac{3}{2}}^{\frac{1}{2}}\\
  &+a_2c_2\nu^{6/5} E_{\frac{1}{2}}-a_2|\gamma|\nu^{1/5}\|A\hat \theta\|^2_{L^2}-a_2c_3|\gamma|\nu^{1/5}\sum_{\beta\in\mathbb Z}\frac{2\pi\beta^2}{(\al^2+\beta^2)^{\frac{1}{2}}}|\tilde\psi(\beta)|^2\\
  &-2a_3\nu^{3/5} E_0-a_3\nu^{3/5}\|(-\Delta_\al)^{\frac{1}{4}}A\hat \theta\|_{L^2}^2+4a_3c_4\nu^{3/5}\|A\hat \theta\|_{L^2}E_0^{\frac{1}{2}}\\ 
  \le&-a_3\nu^{3/5} E_0-a_1\nu^{7/5}E_1-a_2\nu^{4/5}\mathcal E_1-a_3\nu^{1/5}\mathcal E_2\\
  \le&-a_3\nu^{3/5}\tilde\Phi(t).
\end{align*}
Here $a_1,a_2,a_3$ are given in Lemma \ref{lem-decay}.

It is clear that
\begin{align*}
  \tilde\Phi(\nu^{-3/5})=\Phi(\nu^{-3/5})\le E_0(0).
\end{align*}
Then we have for $t\ge\nu^{-3/5}$ that
\begin{align*}
  E_0(t)\le \tilde\Phi(t)\le e^{-a_3\nu^{3/5}(t-\nu^{-3/5})}\tilde\Phi(\nu^{-3/5})\le2e^{-a_3\nu^{3/5}t}E_0(0).
\end{align*}
As $a_3>0$ depends only on $c_0$, the result of Theorem \ref{Thm-cri} follows immediately.
\end{proof}

\section{Sub-critical QG equation}
In this section, we study the equation \eqref{eq:L-QG-al} with $s=1$
\begin{align}\label{eq-ft-x-sub}
  \pa_t\hat \theta-\nu\Delta_\al\hat \theta+i\al e^{-\nu t}\cos y \big(1-(-\Delta_{\al})^{-\frac{1}{2}}\big)\hat \theta=0,
\end{align}
which is the linearized sub-critical QG equation.

Now let
\begin{align*}
  \mathcal L_{\nu,1}=-\nu\Delta_\al+i\gamma(t)B.
\end{align*}
Then \eqref{eq-ft-x-sub} can be written as
\begin{align*}
  \pa_t\hat \theta+\mathcal L_{\nu,1} \hat \theta=0.
\end{align*}
Here we still use the notations
\begin{align*}
  A=\sin y \big(1-(-\Delta_{\al})^{-\frac{1}{2}}\big),\quad B=\cos y \big(1-(-\Delta_{\al})^{-\frac{1}{2}}\big),\quad \gamma(t)=\alpha e^{-\nu t},
\end{align*}
as well as the inner product
\begin{align*}
  \langle u, w\rangle_*=\langle u, w-(-\Delta_{\al})^{-\frac{1}{2}}w\rangle.
\end{align*}

For the sub-critical case, we introduce the new energy functional with different time weight: 
\begin{align*}
  \Phi_1(t)=E_0+a_1\nu t E_1+a_2\nu t^2\mathcal E_1+a_3\nu t^3\mathcal E_2,
\end{align*}
Where $E_0,E_1,\mathcal E_1,\mathcal E_2$ are consistent with the critical case,
\begin{align*}
	E_0=\|\hat \theta\|_*^2,\quad E_1=\|\pa_y\hat \theta\|_*^2,\quad \mathcal E_1=-\frac{\al}{|\al|}\Re\langle iA\hat \theta, \pa_y\hat \theta\rangle_*,\quad \mathcal E_2=\|\hat \theta\|_*^2-\|B\hat \theta\|_*^2.
\end{align*}
We have the following proposition: 
\begin{proposition}\label{pro-energy-sub}
Suppose that $|\al|\geq c_0>1$. Then it holds that 
\begin{align*}
  \frac{d}{dt}E_0=&-2\nu E_{1}-2\nu|\al|^2 E_{0},\\
  \frac{d}{dt}E_1=&-2\nu E_{2}-2\nu|\al|^2 E_{1}-2|\gamma|\mathcal E_1,\\
  \frac{d}{dt}\mathcal E_1\le&-\nu(2|\al|^2+1)\mathcal E_1+2\nu\|A\pa_y\hat \theta\|_{L^2}E_{2}^{\frac{1}{2}}-|\gamma|\|A\hat \theta\|^2_{L^2}-c_3|\gamma|\sum_{\beta\in\mathbb Z}\frac{2\pi\beta^2}{(\al^2+\beta^2)^{\frac{1}{2}}}|\tilde\psi(\beta)|^2,\\
  \frac{d}{dt}\mathcal E_2\le&-2\nu\Big(E_{\frac{1}{2}}+\|A\pa_y\hat \theta\|^2_{L^2}\Big),
\end{align*}
where 
\begin{align*}
  E_{\frac{1}{2}}=2\pi\sum_{\beta\in\mathbb Z}((\al^2+\beta^2)^{\frac{1}{2}}-1)|\tilde \theta(\beta)|^2,\quad \quad
E_{2}=\|\pa_y^2\hat\theta\|_*^2=2\pi\sum_{\beta\in\mathbb Z} \frac{\beta^4((\al^2+\beta^2)^{\frac{1}{2}}-1)}{(\al^2+\beta^2)^{\frac{1}{2}}}|\tilde \theta(\beta)|^2,
\end{align*}
and $0<c_3\le \frac{1}{2}$ is an independent constant. 
\end{proposition}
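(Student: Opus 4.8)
The plan is to follow the architecture of the proof of Proposition~\ref{pro-energy}: I will differentiate $E_0,E_1,\mathcal E_1,\mathcal E_2$ along the flow $\pa_t\hat\theta=-\mathcal L_{\nu,1}\hat\theta$, using that $-\nu\Delta_\al$ is self-adjoint and $i\gamma(t)B$ anti-self-adjoint for $\langle\cdot,\cdot\rangle_*$ (so the transport part never contributes to the ``pure'' terms $E_0,E_1$), the commutator identities $[\pa_y,B]=-A$ and $[\pa_y,A]=B$, and the elementary fact $\Re\langle iBu,u\rangle_*=0$ for all $u$ (since $B$ is $*$-symmetric). As in Proposition~\ref{pro-energy} it suffices to treat $\al>1$, so that $\mathcal E_1=-\Re\langle iA\hat\theta,\pa_y\hat\theta\rangle_*$ and $\gamma>0$. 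The only structural change from the critical case is that the dissipation is now the local second-order operator $-\Delta_\al=\al^2-\pa_y^2$; the split $-\Delta_\al=\al^2-\pa_y^2$ generates the new $-2\nu\al^2(\cdot)$ contributions, while each factor $\pa_y^2$ raises the order of a ``pure'' term by one and each commutation of $\pa_y^2$ with $A$ or $B$ costs one derivative.

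For the first two formulas nothing further is needed. Differentiating $E_0=\|\hat\theta\|_*^2$, the term $\Re\langle i\gamma B\hat\theta,\hat\theta\rangle_*$ vanishes and $-\Re\langle-\Delta_\al\hat\theta,\hat\theta\rangle_*=-\al^2\|\hat\theta\|_*^2-\|\pa_y\hat\theta\|_*^2$, giving $\frac{d}{dt}E_0=-2\nu E_1-2\nu\al^2E_0$. For $E_1=\|\pa_y\hat\theta\|_*^2$ one commutes $\pa_y$ through $\mathcal L_{\nu,1}$ via $\pa_yB=B\pa_y-A$: the $B\pa_y$ piece drops by anti-self-adjointness, the $-A$ piece produces exactly $-2|\gamma|\mathcal E_1$, and the diffusion piece gives $-2\nu E_2-2\nu\al^2E_1$. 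Both identities are exact.

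For $\mathcal E_1$ I would substitute $\pa_t\hat\theta=\nu\Delta_\al\hat\theta-i\gamma B\hat\theta$; the $\gamma$-terms assemble, exactly as in \eqref{eq-mE1}, into $\gamma\Re\langle(BA-AB)\hat\theta,\pa_y\hat\theta\rangle_*-\gamma\|A\hat\theta\|_*^2$, where Lemma~\ref{lem-com-ba} bounds the commutator term by $-c_3|\gamma|\sum_{\beta}\frac{2\pi\beta^2}{(\al^2+\beta^2)^{1/2}}|\tilde\psi(\beta)|^2$ with $0<c_3\le\frac12$, and $-\gamma\|A\hat\theta\|_*^2$ yields (via $\|\cdot\|_*\simeq\|\cdot\|_{L^2}$ for $|\al|\ge c_0$) the term $-|\gamma|\|A\hat\theta\|_{L^2}^2$. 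For the $\nu$-terms I split $\Delta_\al=\pa_y^2-\al^2$: the $\al^2$-part gives $-2\nu\al^2\mathcal E_1$, while the $\pa_y^2$-part, after integration by parts and repeated use of $[\pa_y,A]=B$ together with $\Re\langle iB\pa_y\hat\theta,\pa_y\hat\theta\rangle_*=0$, releases one more clean term $-\nu\mathcal E_1$ (the source of the ``$+1$'' in $-\nu(2\al^2+1)\mathcal E_1$) and leaves the remainder $-2\nu\Re\langle iA\pa_y^2\hat\theta,\pa_y\hat\theta\rangle_*=2\nu\Re\langle iA\pa_y\hat\theta,\pa_y^2\hat\theta\rangle_*$, which I would bound by $2\nu\|A\pa_y\hat\theta\|_{L^2}\|\pa_y^2\hat\theta\|_*=2\nu\|A\pa_y\hat\theta\|_{L^2}E_2^{1/2}$.

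Finally, for $\mathcal E_2=\|\hat\theta\|_*^2-\|B\hat\theta\|_*^2$ the transport terms again vanish and $\frac{d}{dt}\mathcal E_2=-2\nu\Re\big(\langle-\Delta_\al\hat\theta,\hat\theta\rangle_*-\langle B(-\Delta_\al)\hat\theta,B\hat\theta\rangle_*\big)$. Applying the decomposition \eqref{eq-uw} with $u=-\Delta_\al\hat\theta$ and $w=\hat\theta$, the leading term $\langle(-\Delta_\al)^{-1/2}(-\Delta_\al)\hat\theta,(1-(-\Delta_\al)^{-1/2})\hat\theta\rangle$ collapses on the Fourier side to $2\pi\sum_{\beta}\big((\al^2+\beta^2)^{1/2}-1\big)|\tilde\theta(\beta)|^2=E_{\frac12}$, and it remains to establish
\[
\Re\langle A(-\Delta_\al)\hat\theta,A\hat\theta\rangle+\Re\langle(-\Delta_\al)^{-1/2}B(-\Delta_\al)\hat\theta,B\hat\theta\rangle\ \ge\ \|A\pa_y\hat\theta\|_{L^2}^2 .
\]
This is the step I expect to be the main obstacle: it is an explicit Fourier-series inequality of the same flavour as Lemma~\ref{lem-equivalence} and the $\mathcal E_2$-computation inside the proof of Proposition~\ref{pro-energy}. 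Expanding the first inner product via $A(-\Delta_\al)=(-\Delta_\al)A+2B\pa_y-A$ and $\pa_y(A\hat\theta)=A\pa_y\hat\theta+B\hat\theta$ produces $\al^2\|A\hat\theta\|_{L^2}^2+\|A\pa_y\hat\theta\|_{L^2}^2+\|B\hat\theta\|_{L^2}^2-\|A\hat\theta\|_{L^2}^2$ together with the cross terms $2\Re\langle A\pa_y\hat\theta,B\hat\theta\rangle+2\Re\langle B\pa_y\hat\theta,A\hat\theta\rangle$; the second inner product I would bound below by the same term-by-term computation used for $\langle(-\Delta_\al)^{-1/2}B(-\Delta_\al)^{1/2}\hat\theta,B\hat\theta\rangle$ in the proof of Proposition~\ref{pro-energy} (writing $\hat u=(1-(-\Delta_\al)^{-1/2})\hat\theta$ and expanding $\cos y\,\hat u$ into $e^{\pm iy}$ modes); and I would then use $\al^2\ge c_0^2>1$ to absorb the cross terms and the $-\|A\hat\theta\|_{L^2}^2$, leaving $\|A\pa_y\hat\theta\|_{L^2}^2$. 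Combining the four estimates gives Proposition~\ref{pro-energy-sub}.
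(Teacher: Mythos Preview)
Your treatment of $E_0$, $E_1$ and $\mathcal E_1$ is correct and essentially identical to the paper's: the split $-\Delta_\al=\al^2-\pa_y^2$, the commutator $[\pa_y,A]=B$, and Lemma~\ref{lem-com-ba} are exactly the ingredients the paper uses, and your identification of the extra $-\nu\mathcal E_1$ (producing the ``$+1$'') is right.

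The gap is in the $\mathcal E_2$ step. After your commutator expansion you are left with the cross terms
\[
2\Re\langle A\pa_y\hat\theta,B\hat\theta\rangle+2\Re\langle B\pa_y\hat\theta,A\hat\theta\rangle,
\]
and you propose to ``absorb'' them using $\al^2\ge c_0^2>1$. A naive Cauchy--Schwarz on the second piece produces $2\|B\pa_y\hat\theta\|_{L^2}\|A\hat\theta\|_{L^2}$, and $\|B\pa_y\hat\theta\|_{L^2}$ is not controlled by any of the positive quantities you have ($\al^2\|A\hat\theta\|^2$, $\|A\pa_y\hat\theta\|^2$, $\|B\hat\theta\|^2$); so as written, this step fails. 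The missing observation is that these two cross terms combine \emph{exactly}: writing $\hat u=(1-(-\Delta_\al)^{-1/2})\hat\theta$, one integrates $\int\sin 2y\,\pa_y|\hat u|^2\,dy$ by parts to get
\[
2\Re\langle A\pa_y\hat\theta,B\hat\theta\rangle+2\Re\langle B\pa_y\hat\theta,A\hat\theta\rangle
=2\|A\hat\theta\|_{L^2}^2-2\|B\hat\theta\|_{L^2}^2,
\]
which eliminates $B\pa_y\hat\theta$ altogether and turns the first inner product into $(\al^2+1)\|A\hat\theta\|^2-\|B\hat\theta\|^2+\|A\pa_y\hat\theta\|^2$.

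For the second inner product the paper does \emph{not} imitate the Fourier-series calculation from the critical case. Instead it writes $\cos y\,\Delta_\al\hat u=\Delta_\al(\cos y\,\hat u)+2\pa_y(\sin y\,\hat u)-\cos y\,\hat u$, so that
\[
\Re\langle(-\Delta_\al)^{-1/2}B\Delta_\al\hat\theta,B\hat\theta\rangle
=-\langle(-\Delta_\al)^{1/2}\cos y\,\hat u,\cos y\,\hat u\rangle
-\langle(-\Delta_\al)^{-1/2}\cos y\,\hat u,\cos y\,\hat u\rangle
+2\Re\langle(-\Delta_\al)^{-1/2}\pa_y(\sin y\,\hat u),\cos y\,\hat u\rangle.
\]
Two elementary facts then finish the job: the AM--GM bound $\langle(-\Delta_\al)^{1/2}v,v\rangle+\langle(-\Delta_\al)^{-1/2}v,v\rangle\ge 2\|v\|^2$ (applied with $v=\cos y\,\hat u$), and the operator-norm bound $\|(-\Delta_\al)^{-1/2}\pa_y\|_{L^2\to L^2}\le 1$ on the remaining cross term. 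Combining with the first inner product gives the stated inequality with an extra $\al^2\|A\hat\theta\|^2\ge 0$ to spare, and no dependence on $c_0$.
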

\begin{proof}
  We prove the case $\al>1$. The proof for the case $\al<-1$ is the same. 
  
  By using the symmetric property of $B$ and following the idea in the proof of Proposition \ref{pro-energy}, we obtain that 
  \begin{align*}
  \frac{d}{dt}E_0=&-2\Re\langle \mathcal L_{\nu,1}\hat \theta, \hat \theta\rangle_*=2\nu\Re\langle \Delta_\al\hat \theta, \hat \theta\rangle_*-2\gamma\Re\langle iB\hat \theta, \hat \theta\rangle_*=-2\nu E_{1}-2\nu|\al|^2 E_{0},
\end{align*}
and
\begin{align*}
  \frac{d}{dt}E_1=&-2\Re\langle \pa_y\mathcal L_{\nu,1}\hat \theta, \hat \theta\rangle_*=2\nu\Re\langle \Delta_\al\pa_y\hat \theta, \pa_y\hat \theta\rangle_*-2\gamma\Re\langle i\pa_y(B\hat \theta), \pa_y\hat \theta\rangle_*\\
  =&2\nu\Re\langle \Delta_\al\pa_y\hat \theta, \pa_y\hat \theta\rangle_*-2\gamma\Re\langle iB\pa_y\hat \theta, \pa_y\hat \theta\rangle_*+2\gamma\Re\langle iA\hat \theta, \pa_y\hat \theta\rangle_*\\
  =&-2\nu E_{2}-2\nu|\al|^2 E_{1}-2\gamma\mathcal E_1.
\end{align*}

For $\mathcal E_1=-\Re\langle iA\hat \theta, \pa_y\hat \theta\rangle_*$, by using Lemma \ref{lem-com-ba} and the fact that $B$ is symmetric, we have
\begin{align*}
  \frac{d}{dt}\mathcal E_1=&-\nu\Re\langle iA\Delta_\al\hat \theta, \pa_y\hat \theta\rangle_*-\gamma\Re\langle AB\hat \theta, \pa_y\hat \theta\rangle_*\\
  &-\nu\Re\langle iA\hat \theta, \Delta_\al\pa_y\hat \theta\rangle_*+\gamma\Re\langle A\hat \theta, B\pa_y\hat \theta\rangle_*-\gamma\Re\langle A\hat \theta, A\hat \theta\rangle_*\\
  =&-\nu\Re\langle iA\Delta_\al\hat \theta, \pa_y\hat \theta\rangle_*-\nu\Re\langle iA\hat \theta, \Delta_\al\pa_y\hat \theta\rangle_*+\gamma\Re\langle (BA-AB)\hat \theta, \pa_y\hat \theta\rangle_*-\gamma\Re\langle A\hat \theta, A\hat \theta\rangle_*\\
  =&-\nu(2|\al|^2+1)\mathcal E_1 +2\nu\Re\langle iA\pa_y\hat \theta, \pa_y^2\hat \theta\rangle_*+\gamma\Re\langle (BA-AB)\hat \theta, \pa_y\hat \theta\rangle_*-\gamma\Re\langle A\hat \theta, A\hat \theta\rangle_*\\
  \le&-\nu(2|\al|^2+1)\mathcal E_1+2\nu\|A\pa_y\hat \theta\|_*E_{2}^{\frac{1}{2}}-\gamma\|A\hat \theta\|^2_{L^2}-c_3\gamma\sum_{\beta\in\mathbb Z}\frac{2\pi\beta^2}{(\al^2+\beta^2)^{\frac{1}{2}}}|\tilde\psi(\beta)|^2.
\end{align*}

For $\mathcal E_2=\|\hat \theta\|_*^2-\|B\hat \theta\|_*^2$, recalling the identity \eqref{eq-uw}, one can deduce that
\begin{align*}
  \frac{d}{dt}\mathcal E_2=&2\nu\Re\langle \Delta_\al\hat \theta, \hat \theta\rangle_*-2\nu\Re\langle B\Delta_\al\hat \theta, B\hat \theta\rangle_*-2\gamma\Re\langle iB\hat \theta, \hat \theta\rangle_*+2\gamma\Re\langle iBB\hat \theta, B\hat \theta\rangle_*\\
  =&2\nu\Re\langle \Delta_\al\hat \theta, \hat \theta\rangle_*-2\nu\Re\langle B\Delta_\al\hat \theta, B\hat \theta\rangle_*\\
  =&-2\nu\Big(\big\langle (-\Delta_\alpha)^{\frac{1}{2}}\hat \theta, \hat \theta-(-\Delta_{\al})^{-\frac{1}{2}}\hat \theta\big\rangle-\Re\langle (-\Delta_{\al})^{-\frac{1}{2}}B\Delta_\al\hat \theta, B\hat \theta\rangle-\Re\langle A\Delta_\al\hat \theta, A\hat \theta\rangle\Big)\\
  \le&-2\nu\Big(E_{\frac{1}{2}}+\al^2\|A \hat \theta\|_{L^2}^2+\|A\pa_y\hat \theta\|^2_{L^2}\Big).
\end{align*}
Here we use the fact that
\begin{align*}
  &\Re\langle A\Delta_\al\hat \theta, A\hat \theta\rangle+\Re\langle (-\Delta_{\al})^{-\frac{1}{2}}B\Delta_\al\hat \theta, B\hat \theta\rangle\\
  =&-\al^2 \|\sin y \hat u\|_{L^2}^2+\frac{1}{2}\int_\bbT \sin^2y (\hat u\cdot\overline{\pa_y^2\hat u}+\pa_y^2\hat u\cdot\overline{\hat u})dy\\
  &-\Re\langle (-\Delta_{\al})^{\frac{1}{2}}\cos y \hat u, \cos y\hat u\rangle+\Re\langle (-\Delta_{\al})^{-\frac{1}{2}}[\cos y,\pa_y^2]\hat u, \cos y\hat u\rangle\\
  =&\|\cos y \hat u\|_{L^2}^2-(\al^2+1)\|\sin y \hat u\|_{L^2}^2-\|\sin y \pa_y\hat u\|_{L^2}^2\\
  &-\langle (-\Delta_{\al})^{\frac{1}{2}}\cos y\hat u, \cos y\hat u\rangle+2\Re\langle (-\Delta_{\al})^{-\frac{1}{2}}\pa_y(\sin y \hat u), \cos y\hat u\rangle-\langle (-\Delta_{\al})^{-\frac{1}{2}}\cos y\hat u, \cos y\hat u\rangle\\
  \le&-(\al^2+1)\|\sin y \hat u\|_{L^2}^2+2\Re\langle (-\Delta_{\al})^{-\frac{1}{2}}\pa_y(\sin y \hat u), \cos y\hat u\rangle-\|\cos y \hat u\|_{L^2}^2-\|\sin y \pa_y\hat u\|_{L^2}^2\\
  \le&-\al^2\|\sin y \hat u\|_{L^2}^2-\|\sin y \pa_y\hat u\|_{L^2}^2=-\al^2\|A \hat \theta\|_{L^2}^2-\|A\pa_y\hat \theta\|^2_{L^2},
\end{align*}
where $\hat u=(1-(-\Delta_{\al})^{-\frac{1}{2}})\hat \theta$.

This finishes the proof.
\end{proof}

Now we give the proof of Theorem \ref{thm-sub}.
\begin{proof}[Proof of Theorem \ref{thm-sub}]
  Recall the energy functional
  \begin{align*}
  \Phi_1(t)=E_0+a_1\nu t E_1+a_2\nu t^2\mathcal E_1+a_3\nu t^3\mathcal E_2.
\end{align*}
By Proposition \ref{pro-energy-sub}, we get that
\begin{equation}\label{ineq-sub}
  \begin{aligned}    
    \frac{d}{dt}\Phi_1(t)\le& -2\nu E_{1} -2\nu |\al|^2E_{0}\\
    &+a_1\nu E_1-2a_1\nu^2tE_{2}-2a_1|\al|^2\nu^2tE_{1}-2a_1\nu t|\gamma|\mathcal E_1\\
    &+2a_2\nu t \mathcal E_1-a_2\nu^2t^2(2|\al|^2+1)\mathcal E_1+2a_2\nu^2t^2\|A\pa_y\hat \theta\|_*E_{2}^{\frac{1}{2}}\\
    &\qquad\qquad\qquad\qquad\qquad\qquad\quad-a_2|\gamma|\nu t^2\|A\hat \theta\|^2_{L^2}-a_2c_3|\gamma|\nu t^2\sum_{\beta\in\mathbb Z}\frac{2\pi\beta^2|\tilde\psi(\beta)|^2}{(\al^2+\beta^2)^{\frac{1}{2}}}\\
    &+3a_3\nu t^2\mathcal E_2-2a_3\nu^2t^3E_{\frac{1}{2}}-2a_3\nu^2t^3|\al|^2 \|A \hat \theta\|^2_{L^2}-2a_3\nu^2t^3 \|A\pa_y \hat \theta\|^2_{L^2}.    
  \end{aligned}
\end{equation}
It is clear that
\begin{align*}
  \mathcal E_1\le \|A \hat \theta\|_{L^2}E_1^{\frac{1}{2}}.
\end{align*}
By using Lemma \ref{lem-est-me2}, we have
\begin{align*}
  \frac{1}{2}a_2|\gamma|\nu t^2\|A\hat \theta\|^2_{L^2}+a_2c_3|\gamma|\nu t^2\sum_{\beta\in\mathbb Z}\frac{2\pi\beta^2|\tilde\psi(\beta)|^2}{(\al^2+\beta^2)^{\frac{1}{2}}}\ge \frac{c_3}{c_1}|\al|^{-\frac{1}{2}}a_2\gamma\nu t^2\mathcal E_2.
\end{align*}
Lemma \ref{lem-equivalence} gives us that 
\begin{align*}
  \nu^{\frac{3}{2}}t^{\frac{5}{2}}E_0\le(\nu^2t^3E_{\frac{1}{2}}^{\frac{1}{2}})\Big(\nu t^2\big(\sum_{\beta\in\mathbb Z}((\al^2+\beta^2)^{\frac{1}{2}}-1)|\tilde\psi(\beta)|^2\big)^{\frac{1}{2}}\Big)\le(\nu^2t^3E_{\frac{1}{2}}^{\frac{1}{2}})(\nu t^2\mathcal E_2^{\frac{1}{2}}).
\end{align*}
Therefore, we can choose
\begin{align*}
  a_1=\frac{|\al|^{-1}}{\left(\frac{18 c_1}{c_3}\right)^2} ,\quad a_2=\frac{|\al|^{-\frac{1}{2}}}{2\left(\frac{18 c_1}{c_3}\right)^3} ,\quad a_3=\frac{1}{2\left(\frac{18 c_1}{c_3}\right)^4} 
\end{align*}
to ensure for $t\in[0,\nu^{-\frac{3}{7}}]$ that
  \begin{align*}
    \Phi_1(t)\ge E_0(t),\quad \frac{d}{dt}\Phi_1(t)\le&-a_3\nu^{\frac{3}{2}}t^{\frac{5}{2}}E_0(t).
  \end{align*}
Similarly, we introduce a new energy functional defined on $t\in[\nu^{-\frac{3}{7}},\nu^{-1}]$,
\begin{align*}
  \tilde\Phi_1(t)=E_0(t)+a_1\nu^{\frac{4}{7}}E_1(t)+a_2\nu^{\frac{1}{7}}\mathcal E_1(t)+a_3\nu^{-\frac{2}{7}}\mathcal E_2(t),
\end{align*}
which satisfies
  \begin{align*}
    \frac{d}{dt}\tilde\Phi_1(t)\le&-a_3\nu^{\frac{3}{7}}\tilde\Phi_1(t),
  \end{align*}
and
\begin{align*}
  \tilde\Phi_1(\nu^{-\frac{3}{7}})=\Phi_1(\nu^{-\frac{3}{7}})\le E_0(0).
\end{align*}
Therefore, we conclude for $t\in[\nu^{-\frac{3}{7}},\nu^{-1}]$ that
\begin{align*}
  E_0(t)\le \tilde\Phi_1(t)\le e^{-a_3\nu^{\frac{3}{7}}(t-\nu^{-\frac{3}{7}})}\tilde\Phi_1(\nu^{-\frac{3}{7}})\le2e^{-a_3\nu^{\frac{3}{7}}t}E_0(0).
\end{align*}  
The assertion of Theorem \ref{thm-sub} follows immediately.
\end{proof}

\section{Transport fractional diffusion equation}
In this section, we consider the toy model: 
\begin{align*}
	\pa_t\hat \theta(t,\al,y)+\mathcal{L}^S_{\al,\nu,\frac{1}{2}}\hat \theta(t,\al,y)=0,
\end{align*}
where
\begin{align*}
	\mathcal{L}^S_{\al,\nu,\frac{1}{2}}=\nu (-\Delta_{\al})^{\frac{1}{2}}+i\al \cos y.
\end{align*}
We define the energy functional
\begin{align*}
	\Phi^S(t)=E_0^S(t)+a_1\nu^{2}t^2E_1^S(t)+a_2\nu^{2}t^3\mathcal E_1^S(t)+a_3\nu^{2}t^4\mathcal E_2^S(t),
\end{align*}
where
\begin{align*}
	E_0^S=\|\hat \theta\|_{L^2}^2,\quad E_1^S=\|\pa_y\hat \theta\|_{L^2}^2,\quad \mathcal E_1^S=-\frac{\al}{|\al|}\langle i\sin y\hat \theta,\pa_y\hat \theta\rangle, \quad\mathcal E_2^S=\|\sin y\hat \theta\|_{L^2}^2,
\end{align*}
Let us first introduce a new interpolation lemma.
\begin{lemma}\label{lem-inter}
  It holds that
  \begin{align*}
    \|\hat\theta\|^2_{L^2}\le2\|\pa_y(\cos y \hat \theta)\|_{L^2}\|\sin y\hat\theta\|_{L^2}.
  \end{align*}
\end{lemma}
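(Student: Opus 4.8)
The plan is to prove the exact identity
\begin{align*}
\|\hat\theta\|_{L^2}^2 = -2\,\Re\,\langle \sin y\,\hat\theta,\ \pa_y(\cos y\,\hat\theta)\rangle ,
\end{align*}
after which the assertion is immediate from the Cauchy--Schwarz inequality, since then $\|\hat\theta\|_{L^2}^2 \le 2\,|\langle \sin y\,\hat\theta,\ \pa_y(\cos y\,\hat\theta)\rangle| \le 2\,\|\sin y\,\hat\theta\|_{L^2}\,\|\pa_y(\cos y\,\hat\theta)\|_{L^2}$, which is exactly the claimed bound.

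To establish the identity I would first split $\|\hat\theta\|_{L^2}^2 = \|\cos y\,\hat\theta\|_{L^2}^2 + \|\sin y\,\hat\theta\|_{L^2}^2$ using $\cos^2 y + \sin^2 y \equiv 1$, and then expand the pairing on the right with the product rule $\pa_y(\cos y\,\hat\theta) = -\sin y\,\hat\theta + \cos y\,\pa_y\hat\theta$, which gives
\begin{align*}
-2\,\Re\,\langle \sin y\,\hat\theta,\ \pa_y(\cos y\,\hat\theta)\rangle = 2\,\|\sin y\,\hat\theta\|_{L^2}^2 - 2\,\Re\,\langle \sin y\,\hat\theta,\ \cos y\,\pa_y\hat\theta\rangle .
\end{align*}
For the remaining cross term I would use $2\,\Re(\hat\theta\,\overline{\pa_y\hat\theta}) = \pa_y|\hat\theta|^2$ and integrate by parts on the torus (no boundary terms, everything being periodic in $y$), together with $\pa_y(\sin y\cos y) = \cos^2 y - \sin^2 y$, to obtain $-2\,\Re\,\langle \sin y\,\hat\theta,\ \cos y\,\pa_y\hat\theta\rangle = -\int_{\bbT}\sin y\cos y\,\pa_y|\hat\theta|^2\,dy = \int_{\bbT}(\cos^2 y - \sin^2 y)\,|\hat\theta|^2\,dy = \|\cos y\,\hat\theta\|_{L^2}^2 - \|\sin y\,\hat\theta\|_{L^2}^2$. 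Adding this to the term $2\,\|\sin y\,\hat\theta\|_{L^2}^2$ reproduces $\|\sin y\,\hat\theta\|_{L^2}^2 + \|\cos y\,\hat\theta\|_{L^2}^2 = \|\hat\theta\|_{L^2}^2$, which is the desired identity.

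I do not expect a genuine obstacle here: the whole argument is one integration by parts on $\bbT$. The only points that need (routine) care are keeping track of the real parts and complex conjugates when manipulating the $L^2$ pairing, and recording that periodicity in $y$ kills all boundary contributions.
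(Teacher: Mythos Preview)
Your proposal is correct and follows essentially the same route as the paper: both arguments establish the identity $\|\hat\theta\|_{L^2}^2=-2\,\Re\,\langle \sin y\,\hat\theta,\ \pa_y(\cos y\,\hat\theta)\rangle$ by expanding via the product rule and integrating the $\sin y\cos y\,\pa_y|\hat\theta|^2$ term by parts (the paper writes $\cos 2y$ where you write $\cos^2 y-\sin^2 y$), and then conclude by Cauchy--Schwarz.
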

\begin{proof}
A direct calculation shows that,
  \begin{align*}
  &-\int_\bbT\sin y\hat \theta\overline{\pa_y(\cos y \hat \theta)}+\pa_y(\cos y \hat \theta)\overline{\sin y\hat \theta}dy\\
  =&2\int_\bbT\sin^2 y|\hat \theta|^2-\sin y\cos y\hat \theta\overline{\pa_y\hat \theta}-\sin y\cos y\pa_y\hat \theta\overline{\hat \theta}dy\\
  =&2\int_\bbT\sin^2 y|\hat \theta|^2dy-\frac{1}{2}\int_\bbT\sin 2y\pa_y|\hat \theta|^2dy\\
  =&2\int_\bbT\sin^2 y|\hat \theta|^2dy+\int_\bbT\cos 2y|\hat \theta|^2dy\\
  =&\int_\bbT|\hat \theta|^2dy.
\end{align*}
The results follow immediately.
\end{proof}
Now we give the proof for Theorem \ref{thm-toy}.
\begin{proof}[Proof of Theorem \ref{thm-toy}]
  We start with some basic estimates, here we still focus only on the case $\al>1$. It is easy to check that
\begin{align*}
  \frac{d}{dt}E_0^S=-2\nu\langle (-\Delta_\alpha)^{\frac{1}{2}}\hat \theta, \hat \theta\rangle-2 \al\Re\langle i\cos y\hat \theta, \hat \theta\rangle=-2\nu E_{\frac{1}{2}}^S,
\end{align*}
and 
\begin{align*}
  \frac{d}{dt}E_1^S=&-2\nu\langle (-\Delta_\alpha)^{\frac{1}{2}}\pa_y\hat \theta, \pa_y\hat \theta\rangle-2\al\Re\langle i\cos y\pa_y\hat \theta,\pa_y \hat \theta\rangle+2\al\Re\langle i\sin y \hat \theta,\pa_y \hat \theta\rangle\\
  =&-2\nu E_{\frac{3}{2}}^S-2\al\mathcal E_1^S.
\end{align*}
Here we use $E_{\frac{1}{2}}^S$ and $E_{\frac{3}{2}}^S$ to denote $\langle (-\Delta_\al)^{\frac{1}{4}}\hat \theta, (-\Delta_\al)^{\frac{1}{4}}\hat \theta\rangle$ and $\langle (-\Delta_\al)^{\frac{1}{4}}\pa_y\hat \theta, (-\Delta_\al)^{\frac{1}{4}}\pa_y\hat \theta\rangle$ respectively.

For $\mathcal E_1^S=-\langle i\sin y\hat \theta,\pa_y\hat \theta\rangle$, we have
\begin{align*}
  \frac{d}{dt}\mathcal E_1^S=&\nu\Re\langle i\sin y(-\Delta_\alpha)^{\frac{1}{2}}\hat \theta, \pa_y\hat \theta\rangle-\al\Re\langle \sin y\cos y\hat \theta, \pa_y\hat \theta\rangle\\
  &+\nu\Re\langle i\sin y\hat \theta, (-\Delta_\alpha)^{\frac{1}{2}}\pa_y\hat \theta\rangle+\al\Re\langle \sin y\hat \theta, \cos y\pa_y\hat \theta\rangle-\al\Re\langle \sin y\hat \theta, \sin y\hat \theta\rangle\\
  =&2\nu\Re\langle i(-\Delta_\al)^{\frac{1}{4}}\sin y\hat \theta, (-\Delta_\al)^{\frac{1}{4}}\pa_y\hat \theta\rangle-\al\langle \sin y\hat \theta, \sin y\hat \theta\rangle+\nu\Re\langle i[\sin y,(-\Delta_\alpha)^{\frac{1}{2}}]\hat \theta, \pa_y\hat \theta\rangle.
\end{align*}
For the last term $\nu\Re\langle i[\sin y,(-\Delta_\alpha)^{\frac{1}{2}}]\hat \theta, \pa_y\hat \theta\rangle$, it holds that
\begin{align*}
  &\big|\Re\langle i[\sin y,(-\Delta_\alpha)^{\frac{1}{2}}]\hat \theta, \pa_y\hat \theta\rangle\big|\\
  =&\frac{1}{2}\Big|\int_\bbT i\Big(\sin y(-\Delta_\alpha)^{\frac{1}{2}}-(-\Delta_\alpha)^{\frac{1}{2}}\sin y\Big)\hat \theta\cdot\overline{\pa_y\hat \theta}dy\\
  &\qquad\qquad+\frac{1}{2}\int_\bbT \pa_y\hat \theta\cdot\overline{i\Big(\sin y(-\Delta_\alpha)^{\frac{1}{2}}-(-\Delta_\alpha)^{\frac{1}{2}}\sin y\Big)\hat \theta}dy\Big|\\
  =&\frac{\pi}{2}\Big|\sum_{\beta\in\mathbb Z}i\beta\Big((\al^2+(\beta-1)^2)^{\frac{1}{2}}-(\al^2+\beta^2)^{\frac{1}{2}}\Big)\big(\tilde \theta(\beta)\overline{\tilde \theta(\beta-1)}-\tilde \theta(\beta-1)\overline{\tilde \theta(\beta)}\big)\\
  &\qquad\qquad-\sum_{\beta\in\mathbb Z}i\beta\Big((\al^2+(\beta+1)^2)^{\frac{1}{2}}-(\al^2+\beta^2)^{\frac{1}{2}}\Big)\big(\tilde \theta(\beta)\overline{\tilde \theta(\beta+1)}-\tilde \theta(\beta+1)\overline{\tilde \theta(\beta)}\big)\Big|\\
  =&\frac{\pi}{2}\Big|\sum_{\beta\in\mathbb Z}i\beta \frac{1-2\beta}{(\al^2+(\beta-1)^2)^{\frac{1}{2}}+(\al^2+\beta^2)^{\frac{1}{2}}}\big(\tilde \theta(\beta)\overline{\tilde \theta(\beta-1)}-\tilde \theta(\beta-1)\overline{\tilde \theta(\beta)}\big)\\
  &-\sum_{\beta\in\mathbb Z}i\beta\frac{1+2\beta}{(\al^2+(\beta+1)^2)^{\frac{1}{2}}+(\al^2+\beta^2)^{\frac{1}{2}}}\big(\tilde \theta(\beta)\overline{\tilde \theta(\beta+1)}-\tilde \theta(\beta+1)\overline{\tilde \theta(\beta)}\big)\Big|\\
  \le&4E_{\frac{1}{2}}^S.
\end{align*}
Therefore, we get
\begin{align*}
  \frac{d}{dt}\mathcal E_1^S\le&2\nu\|(-\Delta_\al)^{\frac{1}{4}}\sin y\hat \theta\|_{L^2}E_{\frac{3}{2}}^{S \frac{1}{2}}+4\nu E_{\frac{1}{2}}^S-\al\mathcal E_2^S.
\end{align*}

For $\mathcal E_2^S=\|\sin y\hat \theta\|_{L^2}^2$, we have
\begin{align*}
  \frac{d}{dt}\mathcal E_2^S=&-2\nu\Re\langle \sin y(-\Delta_\alpha)^{\frac{1}{2}}\hat \theta, \sin y\hat \theta\rangle-2\al\Re\langle i \sin y\cos y\hat \theta, \sin y\hat \theta\rangle\\
  =&-2\nu\Re\langle \sin y(-\Delta_\alpha)^{\frac{1}{2}}\hat \theta, \sin y\hat \theta\rangle\\
  =&-2\nu\Re\langle (-\Delta_\al)^{\frac{1}{4}}\sin y\hat \theta,(-\Delta_\al)^{\frac{1}{4}} \sin y\hat \theta\rangle-2\nu\Re\langle[\sin y,(-\Delta_\alpha)^{\frac{1}{2}}]\hat \theta, \sin y\hat \theta\rangle\\
  \le&-2\nu\| (-\Delta_\al)^{\frac{1}{4}}\sin y\hat \theta\|_{L^2}^2+4\nu E_0^{S \frac{1}{2}}\mathcal E_2^{S \frac{1}{2}},
\end{align*}
where we use the fact that
\begin{align*}
  \left|\Re\langle [\sin y,(-\Delta_\alpha)^{\frac{1}{2}}]\hat \theta, \sin y\hat \theta\rangle\right|\le\|[\sin y,(-\Delta_\alpha)^{\frac{1}{2}}]\hat \theta\|_{L^2}\mathcal E_2^{S \frac{1}{2}},
\end{align*}
and
\begin{align*}
  &\|[\sin y,(-\Delta_\alpha)^{\frac{1}{2}}]\hat \theta\|_{L^2}^2\\
  =&\int_\bbT \Big(\sin y(-\Delta_\alpha)^{\frac{1}{2}}-(-\Delta_\alpha)^{\frac{1}{2}}\sin y\Big)\hat \theta\cdot\overline{\Big(\sin y(-\Delta_\alpha)^{\frac{1}{2}}-(-\Delta_\alpha)^{\frac{1}{2}}\sin y\Big)\hat \theta}dy\\
  =&\frac{\pi}{2}\sum_{\beta\in\mathbb Z} \Big|\frac{(1-2\beta)\tilde \theta(\beta-1)}{(\al^2+(\beta-1)^2)^{\frac{1}{2}}+(\al^2+\beta^2)^{\frac{1}{2}}}-\frac{(1+2\beta)\tilde \theta(\beta+1)}{(\al^2+(\beta+1)^2)^{\frac{1}{2}}+(\al^2+\beta^2)^{\frac{1}{2}}}\Big|^2\\
  \le&4E_0^S.
\end{align*}

As a result, we have
\begin{align*}
  \frac{d}{dt}\Phi^S (t)\le&-2 \nu E_{\frac{1}{2}}^S+2a_1\nu^2t E_1^S-2a_1\nu^3t^2E_{\frac{3}{2}}^S-2a_1\nu^2t^2\al\mathcal E_1^S\\
  &+3a_2\nu^2t^2\mathcal E_1^S+2a_2\nu^3t^3 \|(-\Delta_\al)^{\frac{1}{4}}\sin y\hat \theta\|_{L^2}E_{\frac{3}{2}}^{S \frac{1}{2}}+4a_2\nu^3t^3 E_{\frac{1}{2}}^S-a_2\nu^2t^3\al\mathcal E_2^S\\
  &+4a_3\nu^2t^3\mathcal E_2^S-2a_3\nu^3t^4\|(-\Delta)^{\frac{1}{4}}\sin y\hat \theta\|_{L^2}^2+4a_3\nu^3t^4E_0^{S \frac{1}{2}}\mathcal E_2^{S \frac{1}{2}}.
\end{align*}

It is clear that 
\begin{align*}
  \nu^2t^2\mathcal E_1^S\le\nu E_{\frac{1}{2}}^S+\nu^3t^4\|(-\Delta_\al)^{\frac{1}{4}}\sin y\hat \theta\|_{L^2}^2,\quad\nu^2t E_1^S\le\nu E_{\frac{1}{2}}^S+\nu^3t^2E_{\frac{3}{2}}^S.
\end{align*}

Therefore, by using Lemma \ref{lem-inter}, we deduce that
\begin{align}\label{eq:001}
\begin{split}
  \nu^{2}t^2E_0^S\le& 2\nu^{2}t^2\mathcal E_2^S+|\al|^{\frac{1}{2}}\nu^{2}t^3\mathcal E_2^S+|\al|^{-\frac{1}{2}}\nu^{2}t E_1^S\\
  \le&2\nu^{2}t^2\mathcal E_2^S+|\al|^{\frac{1}{2}}\nu^{2}t^3\mathcal E_2^S+\nu  E_{\frac{1}{2}}^S+|\al|^{-1}\nu^{3}t^2E_{\frac{3}{2}}^S\\
  \le&2|\al|^{\frac{1}{2}}\nu^{2}t^3\mathcal E_2^S+2\nu  E_{\frac{1}{2}}^S+|\al|^{-1}\nu^{3}t^2E_{\frac{3}{2}}^S.
\end{split}
\end{align}
In the last inequality, we use the following facts. When $t\ge 2$, then
\begin{align*}
  2\nu^{2}t^2\mathcal E_2^S\le\nu^{2}t^3\mathcal E_2^S,
\end{align*}
when $t\le 2$ and $\nu\le \frac{1}{8}$, then
\begin{align*}
  2\nu^{2}t^2\mathcal E_2^S\le\nu  E_{\frac{1}{2}}^S.
\end{align*}

By using the same technique, for $\nu\le \frac{1}{8}$ and $t\le\nu^{-2/3}$, we have
\begin{align*}
  &4a_3\nu^3t^4E_0^{S \frac{1}{2}}\mathcal E_2^{S \frac{1}{2}}\le4a_3\nu^2t^{2/5}E_0^{S \frac{1}{2}}\mathcal E_2^{S \frac{1}{2}}\\
  \le&2a_3\nu^2t^2E_0^S+2a_3\nu^2t^3\mathcal E_2^S\\
  \le&4a_3\nu^2t^2\mathcal E_2^S+2a_3|\al|^{-\frac{1}{2}}\nu^2t E_1^S+4a_3|\al|^{\frac{1}{2}}\nu^2t^3\mathcal E_2^S\\
  \le&4a_3\nu^2t^2\mathcal E_2^S+a_3\nu E_{\frac{1}{2}}^S+a_3|\al|^{-1}\nu^3t^2E_{\frac{3}{2}}^S+4a_3|\al|^{\frac{1}{2}}\nu^2t^3\mathcal E_2^S\\
  \le&2a_3\nu E_{\frac{1}{2}}^S+a_3|\al|^{-1}\nu^3t^2E_{\frac{3}{2}}^S+8a_3|\al|^{\frac{1}{2}}\nu^2t^3\mathcal E_2^S\\
\end{align*}

By choosing
\begin{align*}
a_1=2^{-20}|\alpha|^{-1},\quad a_2=2^{-32}|\alpha|^{-\frac{1}{2}},\quad a_3=2^{-40},
\end{align*}
we deduce for $t\in[1,\nu^{-\frac{2}{3}}]$ that
\begin{align*}
  \Phi^S (t)\ge E^S_0,
\end{align*}
and by \eqref{eq:001},
\begin{align*}
  \frac{d}{dt}\Phi^S (t)\le
  &-2^{-34}|\alpha|^{\frac{1}{2}}\nu^2t^3\mathcal E_2^S-2^{-2}\nu E_{\frac{1}{2}}^S-2^{-22}|\alpha|^{-1}\nu^3t^2E_{\frac{3}{2}}^S\\
  \le&-2^{-35}\nu^2t^2E_0^S.
\end{align*}

Similar to the proof of Theorem \ref{Thm-cri}, we define on $t\in[\nu^{-2/3},\nu^{-1}]$ that
\begin{align*}
  \tilde\Phi^S(t)=E_0^S+2^{-20}|\alpha|^{-1}\nu^{2/3}E_1^S+2^{-32}|\alpha|^{-\frac{1}{2}}\mathcal E_1^S+2^{-40}\nu^{-2/3}\mathcal E_2^S,
\end{align*}
which satisfies
\begin{align*}
  \frac{d}{dt}\tilde\Phi^S(t)\le&-\nu E_{\frac{1}{2}}^S-2^{-19}|\al|^{-1}\nu^{5/3}E_{\frac{3}{2}}^S-2^{-33}|\al|^{\frac{1}{2}}\mathcal E_2^S\le-2^{-35}\nu^{2/3}\tilde\Phi^S(t),
\end{align*}
and
\begin{align*}
  \tilde\Phi^S(\nu^{-2/3})=\Phi^S(\nu^{-2/3})\le E_0^S(0).
\end{align*}
Then it follows that for $t\ge\nu^{-2/3}$
\begin{align*}
  E_0^S(t)\le \tilde\Phi^S(t)\le e^{-2^{-35}\nu^{2/3}(t-\nu^{-2/3})}\tilde\Phi^S(\nu^{-2/3})\le2e^{-2^{-35}\nu^{2/3}t}E_0^S(0).
\end{align*}
The result of Theorem \ref{thm-toy} follows immediately.
\end{proof}

\bibliographystyle{siam.bst} 
\bibliography{references.bib}
\end{document}